\newtheorem{theorem}{Theorem}[section]
\newtheorem*{theorem*}{Theorem}
\newtheorem{prop}[theorem]{Proposition}
\newtheorem{lemma}[theorem]{Lemma}
\newtheorem{corollary}[theorem]{Corollary}
\newtheorem{def/prop}{Definition/Proposition}
\theoremstyle{definition}
\newtheorem{definition}[theorem]{Definition}
\newtheorem{assumption}{Assumption}[section]
\newtheorem{example}{Example}[section]
\newtheorem{notat}{Notation}[section]
\theoremstyle{remark}
\newtheorem{remark}{Remark}[section]
\numberwithin{equation}{section}
\renewcommand{\epsilon}{\varepsilon}
\def\R{{\mathbb R}}
\newcommand{\eps}{\varepsilon}
\let\div\relax
\DeclareMathOperator{\div}{div}
\newcommand{\Rd}{{\R^d}}
\newcommand{\bracket}[1]{\langle #1 \rangle}
\newcommand{\abs}[1]{{\lvert #1 \rvert}}
\newcommand{\xb}{x_*}
\newcommand{\norm}[1]{\lVert #1 \rVert}
\newcommand{\normlinf}[1]{\norm{ #1 }_{L^{\infty}}}
\newcommand{\ibar}{{\bar\imath}}
\newcommand{\jbar}{{\bar\jmath}}
\newcommand{\bari}{\ibar}
\newcommand{\bark}{{\bar k}}
\newcommand{\kbar}{\bark}
\newcommand{\ib}{{\bar\imath}}
\newcommand{\jb}{{\bar\jmath}}
\newcommand{\kb}{{\bar k}}
\newcommand{\lb}{{\bar\ell}}
\newcommand{\mb}{{\bar m}}
\newcommand{\nb}{{\bar n}}
\newcommand{\sbar}{{\bar s}}
\newcommand{\tb}{{\bar t}}
\newcommand{\gt}{\tilde{g}}
\newcommand{\mt}{\tilde{m}}
\newcommand{\Gammat}{\tilde{\Gamma}}
\newcommand{\Gammah}{\Gammat}
\newcommand{\hGamma}{\Gammat}
\newcommand{\hnabla}{\tilde{\nabla}}
\newcommand{\nablat}{\hnabla}
\newcommand{\hR}{\tilde{R}}
\newcommand{\Rt}{\hR}
\newcommand{\Deltat}{{\tilde\Delta}}
\newcommand{\Gt}{\tilde{G}}
\newcommand{\calR}{\mathcal{R}}
\newcommand{\calU}{\mathcal{U}}
\renewcommand{\rho}{\varrho}
\def\namedlabel#1#2{\begingroup
    #2%
    \def\@currentlabel{#2}%
    \phantomsection\label{#1}\endgroup
}
\title{A geometric Laplace method}
\author{
    Flavien Léger\thanks{INRIA Paris (\texttt{flavien.leger@inria.fr}).} 
  $\,$ and 
  François-Xavier Vialard\thanks{Université Gustave Eiffel, LIGM, CNRS and INRIA (\texttt{fxvialard@normalesup.org}).}
}
\begin{document}

\maketitle

\begin{abstract}
  A classical tool for approximating integrals is the Laplace method. The first-order, as well as the higher-order Laplace formula is most often written in coordinates without any geometrical interpretation. In this article, motivated by a situation arising, among others, in optimal transport, we give a geometric formulation of the first-order term of the Laplace method. The central tool is the Kim--McCann Riemannian metric which was introduced in the field of optimal transportation. Our main result expresses the first-order term  with standard geometric objects such as volume forms, Laplacians, covariant derivatives and scalar curvatures of two different metrics arising naturally in the Kim--McCann framework. Passing by, we give an explicitly quantified version of the Laplace formula, as well as examples of applications.
\end{abstract}
\tableofcontents


\section{Introduction}
\label{sec:introduction}

In its simplest form, the Laplace method consists in studying the behavior of the integral $\int_\Rd e^{-u(x)/\eps} \,r(x)dx$ as $\eps\to 0^+$. If $u\colon \R^d \to \R$ is sufficiently smooth and has a non-degenerate minimum at a unique point $\xb$, the Laplace method gives at first order in $\eps$, for instance when $r(x) = 1$,~\cite[Section 2]{ShunMcCullagh}
\begin{multline}\label{EqSimpleLaplace}
  \int_{\Rd} \frac{e^{-u(x)/\eps}}{(2\pi\eps)^{d/2}} \,dx = \frac{e^{-u/\eps}}{\sqrt{\det[u_{ij}]}}\bigg(1 
  + 
    \frac{\eps}{8} u_{ijk}u_{\ell mn}u^{ij}u^{k\ell}u^{mn}\\
  +\frac{\eps}{12}  u_{ijk}u_{\ell mn} u^{i\ell} u^{jm} u^{kn} - \frac{\eps}{8}  u_{ijk\ell}u^{ij}u^{k\ell}+ O(\eps^2)\bigg)\,.
\end{multline}
In this formula, $u_{ij}\coloneqq\partial_{ij}u$, $u_{ijk}\coloneqq\partial_{ijk}u$, etc, and we denote by $u^{ij}$ the inverse matrix of $u_{ij}$. All the quantities involving $u$ on the right-hand side of~\eqref{EqSimpleLaplace} are evaluated at $\xb$ and we use the usual index summation convention. In one dimension, taking into account a non constant density $r$, one has \cite[Chapter 6]{Bender1999}
\begin{multline}\label{EqLaplaceFormula1D} 
  \int_\RR \frac{e^{-u(x)/\epsilon}}{(2\pi\eps)^{1/2}}r(x)dx = \frac{e^{-u/\epsilon}}{(u^{(2)})^{1/2} } \\
   \bigg(r
   + \epsilon \Big( \frac{r^{(2)}}{2u^{(2)}} - \frac{r'
   u^{(3)}}{2 (u^{(2)})^2}  
   - \frac{r \,u^{(4)}}{8 (u^{(2)})^2} + \frac{5 r\, (u^{(3)})^2}{24 (u^{(2)})^3}  \Big) + O(\epsilon^2) \bigg)\,,
  \end{multline}
where again all the quantities involving $u$ and $r$ are evaluated at $\xb$.
Such approximation formulas are ubiquitous in several areas of mathematics and are a very classical subject of interest \cite{WongBook}. It can be found in the literature in different forms, under the name of Laplace method, saddlepoint approximation, or Edgeworth expansion in statistics (see \cite{ReidSaddlepoint,barndorff-nielsen} and \cite{TierneyKadane}). It also appears in statistical physics and particularly in probability in the context of large deviations \cite{bolthausen}. A survey concerned with statistic applications can be found in \cite{ReviewStrawderman}.
Formula \eqref{EqSimpleLaplace} thus quantifies the discrepancy to the Gaussian approximation of such integrals and higher-order expansions are available, see in particular \cite{ShunMcCullagh,Kolassa1997}.

In this article, we are interested in a geometric formulation of the Laplace formula, in its multivariate first-order expansion, in a case where the global minimum is attained on a closed manifold, rather than at a unique point.
Although in Formula \eqref{EqSimpleLaplace}, there is a priori no need to use a particular geometric structure to formulate the results, the first-order Laplace expansion contains fourth-order derivatives of the function $u$ which resemble curvature terms of a metric associated to the Hessian of $u$. The main purpose of our work is to make explicit such a geometric formulation with a metric that only depends on the function $u$. A better geometric understanding of this term is of interest, for instance in recognizing divergence and curvature terms for further downstream applications. Note that there are very few works on a geometric formulation of the terms in the Laplace method; we mention in this direction Amari's work in the context of exponential families \cite{AmariExponential}. A geometric Laplace formula which applies to closed manifolds is presented in \cite{Ludewig} and it is given at any order, however it makes use of an operator which is not explicit in terms of the function $u$.

Let us discuss first a possible issue in developing such a geometric formulation. A standard scheme of proof for the Laplace method consists in using the Morse lemma that finds a local change of coordinates such that the function $u$ becomes a nonnegative quadratic form, thereby trivializing all the higher-order (greater than or equal to $3$) derivatives of $u$ in the Laplace formula. However, in doing so the volume form $r$ has been pushed forward by this diffeomorphism so that it affects the results for instance in Equation \eqref{EqLaplaceFormula1D}, and these quantities are rather implicit in $u$. Due to the presence of such terms, such a reduction does not bring a clear gain for a geometric understanding of the Laplace formula in terms of the function $u$.

What we propose is a geometric study of the first two terms in the asymptotic expansion as $\eps\to 0^+$  of the integral
\[
I(\eps) = \iint_{X\times Y}\frac{e^{-u(x,y)/\eps}}{(2\pi\eps)^{d/2}}\,dr(x,y)\,,
\]
for any given volume form $r$ on $X\times Y$. As it can be expected, $I(\eps)$ concentrates where $u$ is minimal, see for instance~\cite{hwang1980}. We are interested in the particular setting where the set of zeros of $u\ge 0$ is a $d$-dimensional surface in a $2d$-dimensional manifold. This situation naturally arises in optimal transport problems. More precisely, we are concerned with a function $u(x,y)$ where $x,y$ are of equal dimensions, say on a Euclidean space, vanishing on a submanifold that can be described as a graph in these coordinates, i.e. $(x,y(x))$. Then we show that a natural geometry in which the Laplace method can be written is the Kim--McCann geometry \cite{kim2007continuity}. It was proposed as a natural pseudo-Riemannian metric for optimal transport problems since it offered a new interpretation as a curvature tensor of a quantity appearing when studying regularity of transport problems, the so-called Ma--Trudinger--Wang tensor~\cite{ma2005regularity}.

Our main result is, in the context explained above, a Laplace formula at first-order in which all the terms are geometric invariants. The usual case of the Laplace method in Euclidean space, that is for a general function $u$ with a unique non-degenerate point for its minimum, can be retrieved as a particular case of our setting.
Aside from the main contribution, we prove a quantitative version of the standard first-order Laplace method with explicit error bounds. 
Our main result reads

 \begin{theorem*}[Informal]
Let $X,Y$ be two manifolds  of equal dimension $d$.
  Suppose that $u\colon X \times Y \to \R$ is sufficiently smooth, nonnegative and vanishes on a $d$-dimensional manifold in $X \times Y$ that can be described as the graph $\Sigma$ of a diffeomorphism $(x,y(x))$ staying away from $\partial Y$. Then, the following first-order expansion holds:
    \begin{multline*}
      \iint_{X\times Y}\frac{e^{-u(x,y)/\eps}}{(2\pi\eps)^{d/2}}f(x,y)\,d\mt(x,y) = \int_{\Sigma} fdm \,+ \\ \eps\int_\Sigma\Big[- \frac18 \Deltat f +\frac14 \nablat_H f + f \Big(\frac{3}{32}{\Rt}  - \frac{1}{8}R + \frac{1}{24}\bracket{h,h} -\frac{1}{8}\bracket{H,H} \Big)\Big] \,dm \\
      + \eps\int_{\partial\Sigma} \frac14 \bracket{\nabla f - K\nabla^N\!f + fKH, \nu}\,d\sigma  + O(\eps^2) ,
    \end{multline*}
    with a term $O(\eps^2)$ controlled explicitly.
\end{theorem*}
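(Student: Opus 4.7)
The plan is to follow the classical Laplace-method strategy---localize to a neighborhood of the minimum, change variables, Taylor expand, then Gaussian integrate---but carried out in coordinates adapted to the Kim--McCann geometry, so that the combinations of Taylor coefficients produced by the Gaussian moments assemble into intrinsic geometric quantities on $\Sigma$. Because $u\ge 0$ vanishes exactly on $\Sigma$ and, in the Kim--McCann setting, the transverse mixed Hessian is non-degenerate along $\Sigma$, on a tubular neighborhood of $\Sigma$ the function $u$ controls $\on{dist}(\cdot,\Sigma)^2$ from above and below, while $u\ge c>0$ outside; the outer contribution is then $O(e^{-c/\eps})$ and is absorbed in the $O(\eps^2)$ remainder, so the task reduces to an integral over a tubular neighborhood.

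Inside this neighborhood I would use fibered coordinates $(s,z)$ with $s\in\Sigma$ and $z$ a transverse variable (e.g.\ $z=y-y(x)$ in the graph chart) and rescale $z=\sqrt{\eps}\,\zeta$; the Jacobian $\eps^{d/2}$ cancels the prefactor $(2\pi\eps)^{-d/2}$. Writing
\[
u(s,\sqrt{\eps}\,\zeta)/\eps = \tfrac12 Q_s(\zeta,\zeta) + \sqrt{\eps}\,U_3(s,\zeta) + \eps\,U_4(s,\zeta) + O(\eps^{3/2}),
\]
with $Q_s$ the transverse Hessian of $u$ and each $U_k$ a homogeneous polynomial of degree $k$ in $\zeta$ built from the $k$-th transverse jet of $u$ along $\Sigma$, and Taylor-expanding $f$ and the density $\rho$ of $d\mt$ in $z$ analogously, the integrand becomes a Gaussian in $\zeta$ times a formal power series in $\sqrt{\eps}$. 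Retaining the terms up to order $\eps$ and integrating against the Gaussian with covariance $Q_s^{-1}$ yields the leading contribution $\int_\Sigma f\,dm$, where $dm$ is the volume form induced on $\Sigma$ by $Q_s$ and $\rho|_\Sigma$, plus an $\eps$-correction that is a linear combination of contractions of the second and fourth jets of $u$, the second tangential/transverse derivatives of $f$, and the first derivatives of $\rho$.

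The main obstacle, and the geometric content of the statement, is to recognize this $\eps$-correction as $-\tfrac18\Deltat f + \tfrac14\nablat_H f + f\bigl(\tfrac{3}{32}\Rt - \tfrac18 R + \tfrac1{24}\bracket{h,h} - \tfrac18\bracket{H,H}\bigr)$. I would attack this by writing the Christoffel symbols and curvature tensors of the Kim--McCann metric $\gt$ and of the induced metric on $\Sigma$ explicitly in terms of jets of $u$, then invoking the Gauss--Codazzi equations to split $\Rt$ into the intrinsic $R$, $\bracket{h,h}$ and $\bracket{H,H}$. Each monomial produced by Gaussian integration must then be matched to a specific piece of these tensor identities; the numerical coefficients $\tfrac{3}{32},\tfrac{1}{24},\ldots$ are pinned down by Isserlis/Wick formulas for the Gaussian moments. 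The operator $\Deltat f$ appears from the trace of the transverse Hessian of $f$ after incorporating the logarithmic derivative of $\rho$, while $\nablat_H f$ collects the cross-terms coupling the transverse gradient of $f$ with the mean curvature of $\Sigma$. When $\Sigma$ has boundary, rewriting certain transverse derivatives of $f$ as tangential divergences and applying Stokes produces the integral $\int_{\partial\Sigma}\tfrac14\bracket{\nabla f - K\nabla^N f + fKH,\nu}\,d\sigma$. Finally, the explicit $O(\eps^2)$ bound follows from quantitative Taylor remainders on $u,f,\rho$ together with standard Gaussian tail estimates from the localization step.
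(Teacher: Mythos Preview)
Your proposal is correct and follows essentially the same route as the paper: localize to a tubular neighborhood, apply a fibered Laplace expansion (the paper integrates out the $Y$-fiber for each fixed $x$ using a standalone quantitative Laplace theorem, which is equivalent to your rescaling $z=\sqrt{\eps}\,\zeta$), compute Gaussian moments via Isserlis, and then identify the resulting coordinate expression with the geometric quantities by expressing the jets of $u$, $\mt$, and $f$ on $\Sigma$ in terms of $h$, $H$, the Christoffel symbols, and the curvature tensors, finally invoking the Gauss equation and Stokes for the boundary term. One small correction: $\Deltat f=-4c^{\jb i}\partial_{i\jb}f$ involves the \emph{mixed} second derivatives of $f$, not a purely transverse Hessian; this term arises from the cross part of the second-order Taylor expansion of $r=f\mt$ combined with the derivatives of $\mt$, so be careful when you say it ``appears from the trace of the transverse Hessian of $f$.''
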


In the formula above, there are two (pseudo-) Riemannian metrics involved, $\gt$ and $g$. On $X\times Y$, $\gt$ is the (pseudo-Riemannian) metric introduced by Kim and McCann in \cite{kim2007continuity}, henceforth called the Kim--McCann metric. Once restricted to $\Sigma$, it gives, under additional conditions, a Riemannian metric $g$. The volume forms $\mt, m$ are, up to rescaling, the ones induced by $\gt$ and $g$; $\Rt$ and $R$ are the respective scalar curvatures of $\gt$ and $g$, and
$\Deltat$ denotes the pseudo-Riemannian Laplacian associated to $\gt$. 
Associated to $\Sigma$ seen as a submanifold of $X\times Y$, the second fundamental form is denoted by $h$ and $H$ is the mean curvature. Then, $\nablat_{\!H}$ is the covariant derivative in direction $H$. The quantities $\bracket{h,h}$ and $\bracket{H,H}$ denote the pseudo-norms of the second fundamental form and the mean curvature, respectively. Finally, $K$ is the para-complex structure coming from the Kim--McCann geometry and $\nabla^N\!f$ is the normal component of the gradient of $f$.

The material involving the Kim--McCann geometry is derived in Section \ref{sec:KimMcCann} in which we detail the inner and outer geometry of the submanifold $\Sigma$ and derive useful geometric quantities, which are of interest in themselves. The quantitative estimates for the Laplace formula are detailed in Section \ref{SecQuantitativeLaplace} and the main result is given in Section~\ref{sec:geometric_laplace}.

Our proposed framework of a function $u$ on a product manifold $X \times Y$ might a priori seem too constrained to encompass the usual Laplace formula, for instance the one-dimensional case with a unique nondegenerate minimizer. In particular, if any curvature terms had to be expected in the Laplace formula \eqref{EqLaplaceFormula1D}, this would likely be a quantity similar to the curvature of the corresponding graph of some function evaluated at the critical point. Based on the Kim--McCann metric, we propose in Section \ref{sec:applications} a possible solution for a geometric formulation of the standard multidimensional Laplace method that also applies in the one dimensional case with a nondegenerate global minimum. 
Furthermore, this framework of a decomposition into a product space also naturally appears in different situations, a natural one being the parametrix of the heat kernel treated in Section \ref{SecHeatKernel}.
More generally, our result allows to sometimes write simple formulas for the Laplace method, for instance in the case of the likelihood in Bayesian modelling. 
In optimal transport, this decomposition is readily present and the entropic regularization method leads to such integrals. 
A directly related application, which will be treated in a separate article, is the Taylor expansion of the entropic potentials with respect to the regularization parameter.

\paragraph{Notation.} Coordinates on $X$ are denoted by $x^i, x^j,\dots$, while coordinates on $Y$ are denoted by $y^\ib,y^\jb,\dots$ with barred indices. 
We write partial derivatives as $\partial_i=\frac{\partial}{\partial x^i}$, $\partial_\ib=\frac{\partial}{\partial y^\ib}$, $\partial_{ij}=\frac{\partial^2}{\partial x^i\partial x^j}$, $\partial_{i\jb}=\frac{\partial^2}{\partial x^i\partial y^\jb}$, etc. For the derivatives of $c$ and $u$ we write 
\[
    c_i\coloneqq \partial_ic, \: c_{ij}\coloneqq\partial_{ij}c,\: u_i\coloneqq\partial_iu,
\]
and so on. The $d\times d$ inverse matrix of $c_{i\jb}$ is denoted by $c^{\jb i}$, and we adopt the Einstein summation convention where summation over repeated indices is not explicitly written. We never use $c^{\jb i}$ to raise indices (or $c_{i \jb}$ to lower them). Vector fields on $X\times Y$ are expressed in the coordinate frame $(e_i,e_\ib)$, where we set $e_i=\partial_i$ and $e_\ib=\partial_\ib$.

In general, geometric quantities on $X\times Y$ are denoted with a tilde: $\gt,\mt,\Gammat^k_{ij},\Rt_{i\jb k\lb}$ while quantities without tilde denote objects that live on $\Sigma$: $g,m,\Gamma^k_{ij},R_{ijk\ell}$.

\section{Embeddings in the Kim--McCann geometry}\label{sec:KimMcCann}

Consider the triple $(X, Y,c)$ where $X$, $Y$ are two domains of $\Rd$ and $c(x,y)$, $x\in X, y\in Y$ is a real-valued function.
We assume for simplicity that $X$ and $Y$ are subsets of $\Rd$ but the general idea is that they could be smooth manifolds.
In the context of optimal transport, Kim and McCann~\cite{kim2007continuity} introduced a new pseudo-metric on $X\times Y$ which forms the bedrock of the present work. Their goal was to give a geometric meaning to an intriguing quantity discovered by Ma, Trudinger and Wang~\cite{ma2005regularity} that plays an important role in the regularity of optimal transport. This quantity is now called the MTW tensor and it can be seen as a Kim--McCann curvature tensor. 

Let us first describe the Kim--McCann geometry informally, and delay precise definitions until Section~\ref{sec:review-kim-mccann}. Suppose that we have a bijection $X\to Y$, describing for instance a minimal cost matching between $X$ and $Y$, where the cost of matching an element $x\in X$ to $y\in Y$ is $c(x,y)$. Suppose that we are matching $x\mapsto y$ and $x+\xi\mapsto y+\eta$, where $\xi$ and $\eta$ are small displacements, and we are contemplating whether it would be advantageous to instead match $x\mapsto y+\eta$, $x+\xi\mapsto y$. The (positive or negative) loss we would incur is the \emph{cross-difference}~\cite{McCann_glimpse2014,McCann_line1999}
\begin{equation}\label{eq:cross-difference}
    \delta:= [c(x+\xi,y)+c(x,y+\eta)]-[c(x,y)+c(x+\xi,y+\eta)]\,.
\end{equation}
A Taylor expansion in $\xi,\eta$ gives
\begin{equation*}
    \delta = -D^2_{xy}c(x,y)(\xi,\eta) + o(\abs{\xi}^2+\abs{\eta}^2)\,.
\end{equation*}
The leading-order term $-D^2_{xy}c(x,y)(\xi,\eta)$ is precisely the Kim--McCann metric (see Definition~\ref{def:Kim-McCann} for a proper definition). Note that it only depends on the cost $c$ and in particular does not rely on any Euclidean or Riemannian structure that could exist on $X$ and $Y$.

Frequently in addition to a fixed function $c$ we encounter a more problem-dependent quantity induced by a pair of functions $\phi(x)$ and $\psi(y)$. This quantity, which we denote by $u$, is of the form 
\[
    u(x,y):=c(x,y)-\phi(x)-\psi(y)\,,
\]
and satisfies the properties
\begin{align}
    & \,\,u(x,y)\ge 0, \label{eq:c-div:ineq}\\
    &\inf_y \, u(x,y)=0 \quad\text{for each $x$}, \label{eq:c-div:eq1}\\
    &\inf_x \, u(x,y)=0 \quad\text{for each $y$}.\label{eq:c-div:eq2}
\end{align}
It can be seen as ``rectifying'' the cost $c$ by the addition of $\phi$ and $\psi$ to form a nonnegative quantity. Moreover since $\phi$ only depends on $x$ and $\psi$ only depends on $y$ the function $u$ encodes in some sense the same interaction between $X$ and $Y$ as $c$ did. For example~\eqref{eq:cross-difference} remains unchanged when $c$ is replaced by $u$, and in particular the Kim--McCann metrics induced by $c$ and $u$ are the same. 

In optimal transport~\cite{villani2008optimal} $\phi,\psi$ are the Kantorovich potentials. In matching markets~\cite{Chiappori-McCann-Nesheim-2010,GalichonBook}, $\phi(x)$ and $\psi(y)$ represent the payoffs of (say) worker $x$ and firm $y$ respectively. In information geometry~\cite{AmariBook}, $u$ is called a divergence and generally $X=Y$ and $u$ vanishes on the diagonal $x=y$. We call $u$ a \emph{$c$-divergence} following Pal and Wong~\cite{PalWong_new_information_geometry2018}. 

The $c$-divergence generates a subset of $X\times Y$ defined by 
\[
    \Sigma=\{(x,y) : u(x,y)=0\}\,. 
\]
Assuming that each optimization problem in~\eqref{eq:c-div:eq1},~\eqref{eq:c-div:eq2} is attained at a unique minimizer, $\Sigma$ can then be described as the graph of a map either from $X$ or from $Y$. 

In the next subsections we develop the geometry of $\Sigma$ seen as a submanifold of $X\times Y$; this point of view is at the heart of our Laplace formula. We study the Levi-Civita connection $\nabla$ on $\Sigma$ and the associated Riemann curvature, the second fundamental form and the mean curvature. 

This is in contrast to the earlier approach of Wong and Yang in~\cite{wong2021pseudoriemannian}, in which they establish a link between the Kim--McCann framework and information geometry~\cite{AmariBook}. Wong and Yang showed the importance of the $c$-divergence and developed an ``information'' geometry on $\Sigma$ which is different from the one we present in this paper. Let us explain how. 

When presented with a submanifold $\Sigma\subset X\times Y$, the question arises how to produce a connection on $\Sigma$ from a given connection $\nablat$ on $X\times Y$. Motivated by the product structure of $X\times Y$ and information geometry, Wong and Yang introduce two ``dual connections'' on $\Sigma$ which are defined as the projections of $\nablat$ onto the first ($x$) and second ($y$) components, respectively. They then obtain two curvature tensors, one for each of the two connections. Note that these are not the curvatures induced by the metric $g$ on $\Sigma$. 

Instead we choose to follow the more mainstream route of projecting orthogonally $\nablat$ onto $\Sigma$, or equivalently of studying the Levi-Civita connection of $\Sigma$. This is the standard in submanifold theory~\cite{ONeillBook,chen2014total,dajczer2019submanifold}, and general relativity~\cite{MisnerThorneWheelerBook} for instance. The advantages of our approach is to 
manipulate common objects (Levi-Civita connection, metric curvature) and it highlights the importance of extrinsic curvature (the second fundamental form). It also gives us advanced tools at our disposal such as the fundamental equations (Gauss and Codazzi equations). Beyond that, for future work it could help connecting our framework to other notions in submanifold theory, such as the first and second variation formulas~\cite{Simons1968,xin2018minimal} and minimal varieties in optimal transport~\cite{Kim-McCann-Warren-calibrates}.
On the flip side, our formulas for the connection, see Prop.~\ref{prop:connections}, and Riemann curvature~\eqref{eq:def-R} are more complicated than Wong and Yang's formulas for the dual connections~\cite[Lemma 3]{wong2021pseudoriemannian} and the dual curvature tensors~\cite[Lemma 6]{wong2021pseudoriemannian}.

Let us conclude this introduction by examples of $c$-divergences.
\begin{example}[The distance squared cost]
    One of the simplest examples of a $c$-divergence is the square of the Euclidean distance,
    \[
    u(x,y) = \frac12 \abs{x-y}^2\,,
    \]
    where $X=Y$ is a Euclidean space. Here $u$ can be seen as coming from either the quadratic cost $c(x,y)=\frac12 \abs{x-y}^2$ with $\phi=\psi=0$ or from the bilinear cost $c(x,y)=-x\cdot y$ with $\phi=\psi=-\frac12\abs{x}^2$; $\Sigma$ is the diagonal $\{(x,x) : x\in X\}$.   
    The Kim--McCann pseudo-metric is $\gt((\xi,\eta),(\xi,\eta)) = \xi\cdot\eta$ and the induced metric on $\Sigma$ is $g(\xi,\xi) = \abs{\xi}^2$.
     
     On a Riemannian manifold, the corresponding cost is the squared Riemannian distance $\frac 12 d(x,y)^2$. Note that this cost is not smooth in general due to the presence of the cut locus. However, it is smooth on a neighborhood of the diagonal if $M$ is compact. The metric on the diagonal $\Sigma$ is the Riemannian metric of $M$, see Section \ref{SecHeatKernel}.
\end{example}

\begin{example}[Bregman divergence] \label{ex:bregmandiv}
    Let $X=Y$ be a $d$-dimensional vector space and let $f$ be a differentiable strictly convex function on $X$. Then 
    \[
    u(x,y) = f(x)-f(y)-\bracket{\nabla f(y), x-y}
    \]
    is called the Bregman divergence of $f$ and we denote it $f(x|y)$. It vanishes on the diagonal $\Sigma=\{(x,x) : x\in X\}$. The Kim--McCann metric is $\gt_{x,y}((\xi,\eta),(\xi,\eta))=\nabla^2f(y)(\xi,\eta)$ and the Riemannian metric on $\Sigma$ is $g_x(\xi,\xi)=\nabla^2f(y(x))(\xi,\xi)$.
\end{example}

\begin{example}[Fenchel--Young gap] \label{ex:fenchelyounggap}
    Let $X$ be a $d$-dimensional vector space and let $Y=X^*$, the dual vector space of $X$. Let $f$ be a differentiable strictly convex function on $X$ and take $c(x,y)=-\bracket{x,y}$, $\phi=-f$ and $\psi=-f^*$, where $f^*$ is the convex conjugate (Legendre--Fenchel transform) of $f$. Consider 
    \[
    u(x,y) = f(x)+f^*(y)-\bracket{x,y}.
    \]
    By the Fenchel--Young inequality $u(x,y)\ge 0$ and $u$ vanishes on $\{(x,\nabla f(x)):x\in X\}$. Moreover it can be checked that 
    \[
    u(x,y) = f\big(x|\nabla f^*(y)\big)\,,
    \]
    amd also $u(x,y) = f^*(y|\nabla f(x))$, so that $u$ is essentially a Bregman divergence up to a reparametrization in one of the two variables.
    Laplace expansions with Fenchel--Young gaps are explored in Section~\ref{sec:fenchel-young-gap}.
\end{example}

\begin{example}[Translation-invariant cost]
    Let $U\colon\Rd\to\R$ be a strictly convex nonnegative function satisfying $U(0)=0$ and consider 
    \[
        u(x,y) = U(x-y).
    \]
    Then $u\ge 0$ and $u$ vanishes on the diagonal $x=y$. Translation-invariant costs are natural in optimal transport~\cite{GangboMcCann1995,santambrogio2015optimal} and in Section~\ref{sec:translation-invariant-cost} we study them to recover the usual Laplace method on $\Rd$. 

    Let us also mention the work of Khan and Zhang~\cite{KhanZhang2020}, in which the authors introduce a natural Kähler geometry associated to translation-invariant costs. This geometry is different from the Kim--McCann geometry and can be seen as a complementary framework.
\end{example}

\begin{example}[Log-divergence]
    Take $X=Y=\{x\in\Rd : x_i> 0\}$ the positive orthant and $\alpha > 0$. Consider the cost $c(x,y) = -\frac{1}{\alpha} \log(1+\alpha \bracket{x,y})$.
    Interestingly, this cost gives rise to a Kim--McCann metric with constant sectional curvature, specifically $-4\alpha$, as shown in \cite[Section 4.1]{wong2021pseudoriemannian}.    
    Let $f\colon X\to\R$ be a differentiable function such that $e^{\alpha f}$ is convex. Then
    \[
    u(x,y) = f(x)-f(y)- \frac{1}{\alpha} \log(1+\alpha \bracket{\nabla f(y),x-y})
    \]
    is a nonnegative function which is $0$ if $x = y$ (here we assume that the quantity inside the logarithm is positive). This ``log-divergence'' was introduced in~\cite{PalWongArbitrage2016,WongLogDiv2018}. Note that when $\alpha \to 0$, we recover the Bregman divergence.
\end{example}

\subsection{Review of the Kim--McCann metric} \label{sec:review-kim-mccann}

Let us recall the main geometric objects introduced by Kim and McCann in~\cite{kim2007continuity}.

\begin{definition}\label{def:Kim-McCann}
    The Kim--McCann metric is the pseudo-metric defined on the product space $X\times Y$ by 
    \[
        \gt(x,y) = - \frac 12 \begin{pmatrix}0 & D^2_{xy}c(x,y) \\
            D^2_{xy}c(x,y) & 0\end{pmatrix}.
    \] 
    In other words, for a vector field $(\xi(x,y),\eta(x,y))$ on $X\times Y$ we have $\gt(x,y)((\xi,\eta),(\xi,\eta)) = -D^2_{xy}c(x,y)(\xi,\eta)$, where the mixed partial derivatives part of the Hessian $D^2_{xy}c(x,y)(-,-)$ is understood as a bilinear form.
\end{definition}

Due to its particular form, the signature of this pseudo-metric is $(d,d)$ if it is non-degenerate. Indeed, the \emph{para-complex} structure $T(X\times Y)\to T(X\times Y)$ defined by 
\begin{equation}\label{eq:def-K}
    K(\xi,\eta) = (\xi,-\eta)
\end{equation}
is an isomorphism of eigenspaces with eigenvalues $1$ and $-1$. The space
$X\times Y$ endowed with $(\gt,K)$ is known as a para-Kähler manifold~\cite{parakahlerReview,parakahler}. Since $K^2$ is the identity we have that $K=K^{-1}$. Additionally $K$ is skew-symmetric with respect to $\gt$ in the sense that 
\begin{equation} \label{eq:K-skew}
    \bracket{KU,V} = -\bracket{KV,U},
\end{equation}
where we write $\bracket{-,-}=\gt(-,-)$. This also implies that $\bracket{KU,KV} = -\bracket{U,V}$.

The volume form on $X\times Y$ induced by $\gt$ is $\sqrt{\abs{\det \gt}} = 2^{-d} \abs{\det D^2_{xy}c}$. To avoid writing the factor $2^{-d}$ everywhere we instead define 
\begin{equation} \label{eq:def-mt}
    \mt = \abs{\det D^2_{xy}c}.
\end{equation}
Due to the particular structure of $\gt$ several terms in the Christoffel symbols and the curvature tensor vanish. Indeed the only nonzero Christoffel symbols are
\begin{equation}\label{EqChristoffelsKimMcCann}
    \Gammah^k_{ij} = c^{k\mb}c_{\mb ij}, \quad \Gammah^\kb_{\ib\jb} = c^{\kb m}c_{m\ib\jb}\,,
\end{equation}
and all the other combinations of barred and unbarred indices vanish.
The Riemann curvature tensor is defined as 
\begin{equation} \label{eq:def-Rt}
    \Rt(U, V)W = \nablat_V \nablat_U W - \nablat_U\nablat_V W  - \nablat_{[V, U]} W\,,
\end{equation}
following the sign convention of~\cite{kim2007continuity}, which is also the one of~\cite{ONeillBook}. The only components of the curvature tensor that do not vanish are those for which the number of barred and unbarred indices is equal and one has, for these terms
\begin{equation} \label{EqCurvatureTensorRelations}
    \begin{aligned}
    & \Rt_{ij\kb\lb} = 0\,,\\
    & \Rt_{i\jb \kb \ell} = \frac12 \big(c_{i\jb \kb \ell} - c_{i\ell \bar s }c^{\bar s t}c_{\jb\kb t}\big)\,.
    \end{aligned}
\end{equation}
The other components follow by the standard symmetries of the curvature tensor. In particular we note that 
\begin{equation*}
    \Rt_{i\jb\kb\ell} = \Rt_{i\kb\jb\ell}\,,
\end{equation*}
which directly follows from~\eqref{EqCurvatureTensorRelations} or alternatively from the first Bianchi identity. As a direct consequence the Ricci curvature is 
\[
    \Rt_{i\kb} = -2c^{\jb\ell}\Rt_{i\jb\kb\ell}\,,
\]
and vanishes when the number of barred and unbarred indices is not equal. The scalar curvature is then
\begin{equation}\label{EqRicciHat}
\hR = 8 c^{i \jb}c^{\kb\ell} \hR_{i\jb\kb \ell}\,.
\end{equation}

\subsection{Calculus on \texorpdfstring{$\Sigma$}{Sigma}}

Let us now suppose that we have in addition to $c$ two functions $\phi(x)$ and $\psi(y)$ and that the function $u(x,y)=c(x,y)-\phi(x)-\psi(y)$ satisfies~\eqref{eq:c-div:ineq}--\eqref{eq:c-div:eq2}. In optimal transport language this says that $\phi$ and $\psi$ are $c$-conjugate. We assume that $\Sigma=\{(x,y) : u(x,y)=0\}$ is a smooth submanifold of $X\times Y$ that is the graph of a smooth map $X\to Y, x\mapsto y(x)$ as well as the graph of the inverse map $y(X)\to X, y'\mapsto x(y')$. 

Denote by $t_i$ the pushforward of the vector $e_i=\partial_i$ on $X$ by the embedding $\iota\colon x \mapsto (x,y(x))$, which gives 
\[
    t_i = e_i + \partial_iy^\ib e_\ib.
\]
Note that $(t_i)$ is a basis of the tangent bundle $T\Sigma$. We then define $n_i = K(t_i)$, i.e. 
\[
    n_i =  e_i - \partial_iy^\ib e_\ib.
\]
Then $(n_i)$ gives a basis of the normal bundle $T^{\perp}\Sigma$ (this can be checked directly or using that $K$ is an involution). The inverse formulas read $e_i=\frac{t_i+n_i}{2}$, $e_\ib=\frac{\partial x^i}{\partial y^\ib}\frac{t_i-n_i}{2}$, so that a vector field $U$ on $X\times Y$ can be decomposed on $\Sigma$ into tangent and normal components as
\begin{equation}\label{eq:ei-eib-ti-ni}
    U^ie_i+U^\ib e_\ib = \frac 12 \Big(U^i +\frac{\partial x^i}{\partial y^\ib} U^\ib\Big)t_i + \frac 12 \Big(U^i -\frac{\partial x^i}{\partial y^\ib} U^\ib\Big) n_i\,.
\end{equation}

We compute with coordinates on $\Sigma$ using the embedding $\iota$ define above. Therefore tangent vector fields on $\Sigma$ are expressed in the frame $t_i\coloneqq d\iota(e_i)$, as usual with embeddings. However we also have to deal with more complicated quantities such as normal vector fields, valued in the normal bundle $T^\perp\Sigma$. To streamline computations we adopt the following coordinate representation of these objects.

\begin{notat}[Coordinate representation of normal field] \label{notation:normal}
    
    Whenever $N$ is a normal vector field, we define
    \[
        N'=K(N),
    \]
    where $K$ is defined by~\eqref{eq:def-K}. Since $K$ maps $T^\perp\Sigma$ to $T\Sigma$, this turns $N$ into a tangent vector field $N'$. Then, we express $N'$ in coordinates, $N'=N'^k t_k$ and we systematically drop the prime and always write 
    \[
        N'=N^k t_k.
    \]
    Note that since $K=K^{-1}$ and $n_k=Kt_k$ we have $N=N^kn_k$. The reason we prefer $N'$ when computing with tensors in coordinates is that we can use classical formulas for the covariant derivative on $\Sigma$ (see for instance Section~\ref{sec:second-fundamental-form}).

    More general tensor fields $T$ that involve the normal bundle are expressed in coordinates in the same way: using $K$ when necessary we define a tensor $T'$ which only acts on $T\Sigma$ (and the cotangent space) and then express $T'$ in coordinates. For instance for the second fundamental form $h$ we define $h'(U,V)=Kh(U,V)$ and then write $h'(t_i,t_j)=:h_{ij}^kt_k$. 
\end{notat}

We now present a set of ``computational rules'' valid on $\Sigma$. These are
\begin{align}
    U^\ib &=\partial_iy^\ib U^i\quad\text{when $U$ is tangent},\label{eq:calculus:rule1}\\
    u_{ij} &= -c_{i\jb} \partial_jy^\jb\,, \label{eq:calculus:rule2}\\
    g_{ij} &= u_{ij}\,. \label{eq:calculus:rule3}
\end{align}
The first rule~\eqref{eq:calculus:rule1} says that when $U=U^ie_i+U^\ib e_\ib$ is a vector field on $X\times Y$ that is tangent to $\Sigma$, the object $\partial_iy^\ib$ can be used to change an unbarred index to a barred index. 
\noindent
The second rule~\eqref{eq:calculus:rule2} relates the three natural tensors of order $2$ on $\Sigma$. Because $u\ge 0$, the derivative $u_i$ identically vanishes on $\Sigma$. Differentiating in the direction $t_j$ (tangent to $\Sigma$) we obtain $0=\nablat_{t_j}u_i=u_{ij} + \partial_jy^\jb u_{i\jb}$, and note that $u_{i\jb}=c_{i\jb}$ since mixed derivatives of $u$ and $c$ are always equal. This shows~\eqref{eq:calculus:rule2}.
\noindent
The third rule~\eqref{eq:calculus:rule3} expresses the metric $g$ induced by $\gt$ on $\Sigma$ in terms of $u$. If $U=U^ie_i+U^\ib e_\ib,V=V^je_j+V^\jb e_\jb$ are two tangent vector fields then
\begin{align*}
     \gt(U,V) &= U^iV^\jb \gt(e_i,e_\jb) + U^\ib V^j \gt(e_\ib,e_j) \\
     &= U^i \partial_jy^\jb V^j (-\frac12 c_{i\jb}) + \partial_iy^\ib V^j (-\frac12 c_{\ib j}) \\
     &= u_{ij} U^iV^j\,
\end{align*}
where the last inequality follows from~\eqref{eq:calculus:rule2} and the symmetry of $u_{ij}$. Therefore $u_{ij}(x,y(x)) = g_{ij}(x)$.

We note that $g$ is a priori not necessarily definite. However, in the rest of the paper, we assume that it is the case. This condition is the non-degeneracy condition in \cite{kim2007continuity}.

\subsection{Second fundamental form and projected connections} \label{sec:second-fundamental-form}

Viewing $\Sigma$ as a submanifold of $X\times Y$ leads to natural geometric objects: the second fundamental form which measures the extrinsic curvature of $\Sigma$ embedded in $X\times Y$, and connections on the tangent bundle $T\Sigma$ as well as the normal bundle $T^\perp\Sigma$.

We recall some basic submanifold theory and point to~\cite{ONeillBook} for a reference on the subject. Let $\nablat$ denote the Levi-Civita connection on $(X\times Y,\gt)$. Let $U$, $V$, $N$ be vector fields on $X\times Y$ such that $U$ and $V$ are tangent to $\Sigma$ and $N$ is normal to $\Sigma$. Then on $\Sigma$ the covariant derivative $\nablat_UV$ can be decomposed into
\begin{equation}\label{EqSecondFundamentalFormFirst}
    \nablat_UV = \nabla_UV + h(U,V),
\end{equation}
where $\nabla_UV$ and $h(U,V)$ denote the orthogonal projections of $\nablat_UV$ onto the tangent bundle $T\Sigma$ and the normal bundle $T^\perp\Sigma$, respectively. Similarly we can decompose
\[
    \nablat_U N = -A_N(U) + \nabla^\perp_U N,
\]
where the shape operator $A$ is valued in $T\Sigma$ and $\nabla^\perp$ is a torsion-free connection on the normal bundle.

\begin{definition}[$h$ and $H$]
    As defined by Formula~\eqref{EqSecondFundamentalFormFirst}, $h$ is called the \emph{second fundamental form}. The \emph{mean curvature} $H$ is a normal vector field defined as the trace of $h$ with respect to $g$.
\end{definition}

We list important known results, relevant for the rest of the paper.
\begin{prop}[\cite{ONeillBook}]
    The tangent part $\nabla_UV$ of $\nablat_UV$ is precisely the Levi-Civita connection of the induced metric $g$ on $\Sigma$. 
    
    The second fundamental form is a symmetric bilinear form, which is valued in the normal bundle $T^\perp\Sigma$.

    The second fundamental form and the shape operator are manifestations of the same object since they satisfy the identity $\bracket{A_N(U),V} = \bracket{h(U,V),N}$, where $\bracket{-,-}=\gt(-,-)$. 
\end{prop}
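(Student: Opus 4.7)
The plan is to verify each of the three claims by using the defining properties of $\tilde\nabla$ (torsion-freeness, metric compatibility) together with the orthogonal decomposition $T_p(X\times Y) = T_p\Sigma \oplus T_p^\perp\Sigma$ at points $p\in\Sigma$. All three statements are standard consequences of the formal structure of the decomposition $\tilde\nabla_UV = \nabla_UV + h(U,V)$.

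First I would show that $\nabla$ is the Levi-Civita connection of $g$. It is straightforward to check from the definition that $\nabla$ is $\R$-linear in both arguments, $C^\infty(\Sigma)$-linear in $U$, and satisfies the Leibniz rule in $V$; hence it is a connection on $T\Sigma$. For torsion-freeness, take $U,V$ tangent to $\Sigma$ and extend them to vector fields on $X\times Y$ tangent to $\Sigma$ along $\Sigma$; then $[U,V]$ is tangent to $\Sigma$, so projecting the identity $\tilde\nabla_UV - \tilde\nabla_VU = [U,V]$ onto $T\Sigma$ yields $\nabla_UV - \nabla_VU = [U,V]$, while projecting onto $T^\perp\Sigma$ gives $h(U,V)=h(V,U)$, which simultaneously establishes the symmetry claim in (2). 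For metric compatibility, since $W$ tangent to $\Sigma$ satisfies $\langle h(U,V),W\rangle=0$, restricting $U\langle V,W\rangle = \langle \tilde\nabla_UV, W\rangle + \langle V, \tilde\nabla_UW\rangle$ to $\Sigma$ gives exactly $U\langle V,W\rangle = \langle \nabla_UV, W\rangle + \langle V, \nabla_UW\rangle$. By uniqueness of the Levi-Civita connection, $\nabla$ coincides with the one of $g$.

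For the remaining piece of (2), namely that $h$ is $C^\infty(\Sigma)$-bilinear (and hence tensorial), this follows directly from the corresponding property of $\tilde\nabla$ in the first argument and, in the second argument, from the Leibniz rule: $h(U,fV) = \tilde\nabla_U(fV) - \nabla_U(fV) = (Uf)V + f\tilde\nabla_UV - (Uf)V - f\nabla_UV = fh(U,V)$, since the $(Uf)V$ terms are tangent and cancel. That $h$ is normal-valued is immediate from its definition as the normal projection.

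Finally, for (3), I would differentiate the identity $\langle V, N\rangle = 0$ (valid on $\Sigma$ since $V$ is tangent and $N$ is normal) along a tangent direction $U$: metric compatibility of $\tilde\nabla$ gives
\[
0 = U\langle V, N\rangle = \langle \tilde\nabla_UV, N\rangle + \langle V, \tilde\nabla_UN\rangle = \langle h(U,V), N\rangle + \langle V, -A_N(U) + \nabla^\perp_U N\rangle,
\]
where in the last term $\langle V, \nabla^\perp_UN\rangle = 0$ because $V$ is tangent and $\nabla^\perp_UN$ is normal. This yields $\langle A_N(U), V\rangle = \langle h(U,V), N\rangle$ as claimed. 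No step is genuinely difficult; the only minor subtlety is justifying the tangential extension of vector fields when manipulating brackets, but this is a standard device in submanifold theory that does not affect the pointwise identities.
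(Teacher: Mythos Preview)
Your proof is correct and follows the standard textbook argument. Note, however, that the paper does not actually prove this proposition: it is quoted as a known result with a citation to O'Neill's \emph{Semi-Riemannian Geometry}, so there is no ``paper's own proof'' to compare against. What you have written is precisely the classical derivation one finds in O'Neill (or any submanifold theory reference): torsion-freeness and metric compatibility of $\nabla$ by projecting the corresponding identities for $\tilde\nabla$, symmetry of $h$ as the normal projection of $\tilde\nabla_UV-\tilde\nabla_VU=[U,V]$, tensoriality of $h$ via the Leibniz rule, and the $A$--$h$ duality by differentiating $\langle V,N\rangle=0$.
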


Let us now derive the expressions of $\nabla_UV$, $h(U,V)$ and $\nabla^\perp_U N$ in our framework.

First we recall that our coordinate representations always refer to objects valued in the tangent bundle, as described in Notation~\ref{notation:normal}. Notably $H^kt_k=H'$ with $H'=KH$ and this implies that $H=H^kn_k$. Similarly $h_{ij}^kt_k=h'(t_i,t_j)$ with $h'=Kh$, so that 
\[
    h(t_i,t_j)=h_{ij}^kn_k\,.
\]
In addition, we work more favorably with the purely covariant version 
\begin{equation*}
    h'(U,V,W)\coloneqq\bracket{Kh(U,V),W}\,,
\end{equation*}
where $W$ is tangent and $\bracket{-,-}=\gt(-,-)$. In that way the second fundamental form can be seen as a (scalar) trilinear form on $T\Sigma$. In coordinates we have 
\[
    h'_{ijk}=h'(t_i,t_j,t_k)=\bracket{h^\ell(t_i,t_j) t_\ell,t_k} = h_{ij}^\ell g_{k\ell}.
\]
Thus we see that the index is lowered using the metric $g$ as usual, and from now on we systematically drop the prime and write $h_{ijk}= h_{ij}^\ell g_{k\ell}$. 
Importantly we don't need to worry about the placement of indices because of the following result, a direct consequence of ~\eqref{eq:formula-uijk} in Lemma~\ref{lemma:derivatives-c-divergence}.

\begin{prop}\label{prop:symmetryofh}
    $h'(U,V,W)$ is totally symmetric in $U, V, W$.
\end{prop}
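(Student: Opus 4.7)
The plan is to exploit two structural features of the Kim--McCann/para-Kähler geometry: the para-complex structure $K$ is parallel for the Levi-Civita connection $\tilde\nabla$, i.e.\ $\tilde\nabla K=0$; and $K$ swaps the tangent and normal bundles of $\Sigma$ (with $Kt_j=n_j$). Combined with the classical symmetry $h(U,V)=h(V,U)$, it suffices to establish symmetry of $h'$ in its last two arguments.

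The preparatory step is to verify $\tilde\nabla K=0$ by a short coordinate computation. In the paper's coordinates $K$ has constant components ($+1$ on the unbarred block, $-1$ on the barred block, zero off-diagonal), so $(\tilde\nabla_a K)^b_c=\Gammat^b_{ad}K^d_c-\Gammat^d_{ac}K^b_d$ has no $\partial K$ contribution. A case analysis on the barred/unbarred pattern of $b$ and $c$, using that the only nonvanishing Christoffels in~\eqref{EqChristoffelsKimMcCann} have either all unbarred or all barred indices, shows each component vanishes.

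For the main computation, pick tangent fields $U,V,W$ along $\Sigma$. Skew-symmetry of $K$ together with $KW\in T^\perp\Sigma$ and $\nabla_UV\in T\Sigma$ gives
\[
h'(U,V,W)=\bracket{Kh(U,V),W}=-\bracket{KW,h(U,V)}=-\bracket{KW,\tilde\nabla_UV}.
\]
Since $KW$ is normal and $V$ tangent, $\bracket{KW,V}\equiv 0$ on $\Sigma$; metric compatibility of $\tilde\nabla$ then yields $\bracket{KW,\tilde\nabla_UV}=-\bracket{\tilde\nabla_U(KW),V}$, and applying $\tilde\nabla K=0$ followed by skew-symmetry of $K$ and symmetry of $\tilde g$ converts this to $\bracket{KV,\tilde\nabla_UW}$. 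The same opening chain with the roles of $V$ and $W$ swapped produces $h'(U,W,V)=-\bracket{KV,\tilde\nabla_UW}$, so $h'(U,V,W)=h'(U,W,V)$. Combined with $h'(U,V,W)=h'(V,U,W)$ (from $h$ symmetric in its two arguments, via torsion-freeness of $\tilde\nabla$), this gives total symmetry.

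The only real content is the verification $\tilde\nabla K=0$; the rest is bookkeeping of $K$'s skew-symmetry and $K^2=\id$. An alternative route, closer to the footnote citing~\eqref{eq:formula-uijk} in Lemma~\ref{lemma:derivatives-c-divergence}, is to express $h'_{ijk}$ explicitly via iterated differentiation of the stationarity condition $u_i(x,y(x))\equiv 0$, obtaining an identity that couples it to $u_{ijk}|_\Sigma$; the manifest total symmetry of $u_{ijk}=\partial_{ijk}u$ then transfers directly to $h'_{ijk}$.
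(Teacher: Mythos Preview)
Your proof is correct and takes a genuinely different route from the paper. The paper's argument is the coordinate computation you sketch at the end: Lemma~\ref{lemma:derivatives-c-divergence} gives the identity
\[
u_{ijk} = -2h_{ijk} - \big(c_{\ib jk}\partial_i y^{\ib} + c_{i\jb k}\partial_j y^{\jb} + c_{ij\kb}\partial_k y^{\kb}\big)
\]
on $\Sigma$, and since $u_{ijk}=\partial_{ijk}u$ is totally symmetric and the parenthesized term is a manifest symmetrization over $i,j,k$, the symmetry of $h_{ijk}$ follows immediately.

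Your approach is instead intrinsic to the para-Kähler structure: you verify $\tilde\nabla K=0$ directly from~\eqref{EqChristoffelsKimMcCann} (so there is no forward dependence on Prop.~\ref{prop:K}), then combine it with $K$ exchanging $T\Sigma$ and $T^\perp\Sigma$ and the skew-symmetry~\eqref{eq:K-skew} to reduce $h'(U,V,W)$ to $-\bracket{KV,\tilde\nabla_UW}$, which coincides with $h'(U,W,V)$ by the same opening step. The chain of signs is correct. Your argument is coordinate-free and would apply verbatim to any para-Kähler submanifold whose tangent bundle is mapped to its normal bundle by $K$; the paper's route is more computational but has the side benefit of producing the explicit formula~\eqref{eq:formula-uijk}, which is needed independently in the proof of Theorem~\ref{thm:laplace}.
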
 

Translated into the language of information geometry thanks to Wong and Yang's article \cite{wong2021pseudoriemannian} (see also the discussion at the beginning of Section~\ref{sec:KimMcCann}), $h'$ seems to be related to the so-called \emph{cubic tensor}, defined as the difference of the dual connections and which is known to be symmetric~\cite{AmariBook}. 
We note that the cubic tensor seems to remain a bit of a mysterious quantity. 
Thus connecting it to the second fundamental form may be of interest to the information geometry community.

We also record that $\gt(t_i,t_j)=g_{ij}$ (this is by definition) and $\gt(n_i,n_j)=-g_{ij}$ (this follows from~\eqref{eq:K-skew}). Thus in the basis $((t_i)_i,(n_i)_i)$ the Kim--McCann metric takes the form 
\begin{equation*}
    \begin{pmatrix} g_{ij} & 0\\
        0& -g_{ij}\end{pmatrix}.
\end{equation*}
We now ready to state the main results of this section.

\begin{prop}\label{prop:connections}
    The second fundamental form is given by
    \begin{equation}\label{EqFormulaForh}
        h_{ij}^k=\frac 12 \Big(\Gammat_{ij}^k - \frac{\partial x^k}{\partial y^\kb}\frac{\partial y^\ib}{\partial x^i}\frac{\partial y^\jb}{\partial x^j} \Gammat_{\ib\jb}^\kb - \frac{\partial x^k}{\partial y^\kb}\frac{\partial^2 y^\kb}{\partial x^i\partial x^j}\Big)\,.
    \end{equation}
    The mean curvature is 
    \begin{equation} \label{eq:def-H}
        H^k=u^{ij}h^k_{ij}.
    \end{equation}
    The Christoffel symbols for the Levi-Civita connection $\nabla$ on $\Sigma$ are 
    \begin{equation}\label{EqConnection1}
        \Gamma_{ij}^k=\frac 12 \Big(\Gammat_{ij}^k + \frac{\partial x^k}{\partial y^\kb}\frac{\partial y^\ib}{\partial x^i}\frac{\partial y^\jb}{\partial x^j} \Gammat_{\ib\jb}^\kb + \frac{\partial x^k}{\partial y^\kb}\frac{\partial^2 y^\kb}{\partial x^i\partial x^j}\Big)\,.
    \end{equation}
    
\end{prop}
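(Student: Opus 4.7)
The plan is to compute the ambient covariant derivative $\tilde\nabla_{t_i} t_j$ directly in coordinates, then read off both $\Gamma^k_{ij}$ and $h^k_{ij}$ from its decomposition into tangent and normal parts, using the result already quoted in the excerpt that the tangent part of $\tilde\nabla_U V$ is the intrinsic Levi-Civita connection $\nabla_U V$ and the normal part is $h(U,V)$. The mean curvature formula then comes for free by tracing against $g$.

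First, I would extend the tangent frame to a neighborhood of $\Sigma$ by setting $\tilde t_j(x,y) = e_j + (\partial_j y^{\ib})(x)\, e_{\ib}$, where the coefficients depend on $x$ only. Because $\tilde\nabla_{t_i} t_j$ for vector fields along $\Sigma$ is independent of the extension of $t_j$ off $\Sigma$, this choice is harmless. Applying the Leibniz rule, using that the only nonvanishing Christoffel symbols of the Kim--McCann connection are $\Gammat^k_{ij}$ and $\Gammat^{\kb}_{\ib\jb}$ from~\eqref{EqChristoffelsKimMcCann}, and that mixed covariant derivatives $\tilde\nabla_{e_i} e_{\jb}$ and $\tilde\nabla_{e_{\ib}} e_j$ vanish, a direct computation yields
\begin{equation*}
  \tilde\nabla_{t_i} t_j \;=\; \Gammat^{k}_{ij}\, e_k \,+\, \frac{\partial^2 y^{\kb}}{\partial x^i\partial x^j}\, e_{\kb} \,+\, \frac{\partial y^{\ib}}{\partial x^i}\frac{\partial y^{\jb}}{\partial x^j}\, \Gammat^{\kb}_{\ib\jb}\, e_{\kb}.
\end{equation*}

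Next, I would convert each basis vector from $(e_i, e_{\ib})$ to $(t_k, n_k)$ using the inverse formulas $e_i = \tfrac12(t_i+n_i)$ and $e_{\ib} = \tfrac12 \frac{\partial x^i}{\partial y^{\ib}}(t_i - n_i)$ appearing just above~\eqref{eq:ei-eib-ti-ni}. This produces a clean splitting: the $t_k$-part matches exactly formula~\eqref{EqConnection1} for $\Gamma^k_{ij}$, while the $n_k$-part gives $h(t_i,t_j)$. Under the convention of Notation~\ref{notation:normal}, one has $h(t_i,t_j) = h^k_{ij}\, n_k$ (because $K n_k = t_k$), so the coefficients in the normal expansion are precisely the components $h^k_{ij}$ and~\eqref{EqFormulaForh} follows. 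Finally, the mean curvature formula~\eqref{eq:def-H} is obtained from its definition $H = \mathrm{tr}_g h$ together with rule~\eqref{eq:calculus:rule3}, which identifies $g^{ij}$ with $u^{ij}$ along $\Sigma$.

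There is no serious obstacle here: everything reduces, after adopting the coordinate conventions, to a single change of basis. The one place to be attentive is the bookkeeping between a normal field $h(t_i,t_j)$ and its tangent representative under $K$, which is why I single out the identification $h(t_i,t_j) = h^k_{ij}\, n_k$ explicitly. Once that is noted, the signs in the two halves of the decomposition match the $+$/$-$ pattern of the inverse formulas for $(e_i, e_{\ib})$, and the same calculation produces both~\eqref{EqFormulaForh} and~\eqref{EqConnection1} simultaneously.
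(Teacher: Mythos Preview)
Your proof is correct and follows essentially the same route as the paper's own argument: compute $\tilde\nabla$ of tangent vectors using the Kim--McCann Christoffel symbols, then convert to the $(t_k,n_k)$ frame via~\eqref{eq:ei-eib-ti-ni} and read off the tangent and normal components. The only cosmetic difference is that the paper carries general tangent fields $U,V$ through the computation while you work directly with the frame vectors $t_i,t_j$ (with your $x$-dependent extension making the $\partial_{\ib}$-derivatives of the coefficients vanish), which slightly streamlines the bookkeeping but changes nothing substantive.
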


\begin{prop} \label{prop:K}
    The involution $K$ is parallel with respect to $\nablat$, or in other words $K$ and $\nablat$ commute in the sense that for any vector fields $U$ and $V$, $K(\nablat_U V)=\nablat_UK(V)$. 
    
    In particular the normal connection can be obtained from the tangent connection (Levi-Civita on $\Sigma$): if $U$ is tangent, $N$ is normal and $V=K(N)$ then
    \begin{equation}\label{eq:K-commutes}
        \nabla_U^\perp N = K(\nabla_UV)\,.
    \end{equation}
\end{prop}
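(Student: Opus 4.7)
The strategy is to express $K$ as a $(1,1)$-tensor in the coordinate frame $(e_i,e_\ib)$ and show directly that $\nablat K=0$ using the Christoffel symbols of the Kim--McCann connection. In this frame $K$ has constant components
\[
K^i_{\ j}=\delta^i_{\ j},\quad K^\ib_{\ \jb}=-\delta^\ib_{\ \jb},\quad K^i_{\ \jb}=0,\quad K^\ib_{\ j}=0,
\]
so the partial derivatives $\partial_b K^a_{\ c}$ all vanish. The general formula
\[
(\nablat K)^a_{\ bc} = \partial_b K^a_{\ c} + \Gammat^a_{\ bd} K^d_{\ c} - \Gammat^d_{\ bc} K^a_{\ d}
\]
then reduces to $\Gammat^a_{\ bd} K^d_{\ c} - \Gammat^d_{\ bc} K^a_{\ d}$, and I would check that this vanishes by using~\eqref{EqChristoffelsKimMcCann}: the only nonvanishing Christoffel symbols are $\Gammat^k_{\ ij}$ (all unbarred) and $\Gammat^\kb_{\ \ib\jb}$ (all barred). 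Conceptually the point is that the decomposition $T(X\times Y)=TX\oplus TY$ is preserved by $\nablat$, and $K$ is precisely the operator $(+1)$ on $TX$ and $(-1)$ on $TY$.

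Concretely, for the unbarred case $a=k$ and $b,c$ unbarred, both surviving contributions are $\Gammat^k_{\ bc}$ (the first term forces $d$ unbarred via $K^d_{\ c}$ and produces $\Gammat^k_{\ bc}$; the second forces $d$ unbarred via $K^k_{\ d}$ and also produces $\Gammat^k_{\ bc}$), so they cancel. If any of $b,c$ is barred, all surviving Christoffel symbols vanish thanks to~\eqref{EqChristoffelsKimMcCann}, and both terms are zero. The barred case $a=\kb$ is identical, with the extra minus sign from $K^\kb_{\ \lb}=-\delta^\kb_{\ \lb}$ appearing on both sides and cancelling. This establishes $\nablat K=0$, which is equivalent to $K(\nablat_UV)=\nablat_UK(V)$ for all vector fields $U,V$ (since $K$ is a pointwise endomorphism).

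For the second assertion, let $U$ be tangent, $N$ normal, and $V=K(N)$, which is tangent because $K$ swaps $T\Sigma$ and $T^\perp\Sigma$. Decomposing along $\Sigma$ gives
\[
\nablat_U N = -A_N(U) + \nabla^\perp_U N,\qquad \nablat_U V = \nabla_U V + h(U,V).
\]
Applying $K$ to the first decomposition and using the parallelism established above yields
\[
\nablat_U V = K(\nablat_U N) = -K A_N(U) + K(\nabla^\perp_U N),
\]
in which $-KA_N(U)$ is normal and $K(\nabla^\perp_U N)$ is tangent. Identifying tangent parts with the second decomposition gives $\nabla_U V = K(\nabla^\perp_U N)$, and applying $K$ once more (using $K^2=\id$) produces the desired formula~\eqref{eq:K-commutes}.

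There is no real obstacle: the computation is a direct verification and hinges entirely on the block structure of $\Gammat$ recorded in~\eqref{EqChristoffelsKimMcCann}. The only thing that requires minor care is bookkeeping of barred versus unbarred indices when evaluating $\Gammat^a_{\ bd}K^d_{\ c}-\Gammat^d_{\ bc}K^a_{\ d}$ across all four index-type combinations.
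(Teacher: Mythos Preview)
Your proof is correct and follows essentially the same route as the paper. Both arguments reduce $\nablat K=0$ to the block structure of the Christoffel symbols~\eqref{EqChristoffelsKimMcCann}; you package this as the vanishing of $(\nablat K)^a_{\ bc}$ for a $(1,1)$-tensor with constant components, while the paper checks $K(\nablat_UV)=\nablat_U(KV)$ directly on basis vectors $U=e_i,e_\ib$ and $V=ve_j,ve_\jb$. For~\eqref{eq:K-commutes} you decompose $\nablat_UN$ and apply $K$, whereas the paper decomposes $\nablat_UV$ and applies $K$; these are the same identification read in opposite directions.
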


Before proving Prop.~\ref{prop:connections} and~\ref{prop:K}, let us explain how we will use~\eqref{EqFormulaForh} and~\eqref{eq:K-commutes}. Thanks to~\eqref{eq:K-commutes} we can differentiate normal vector fields as if they were tangent vector fields. In particular because we defined $n_j=K(t_j)$ we have 
\[
    \nabla^\perp_{t_i} n_j=\nabla_{t_i}(Kt_j)=K(\nabla_{t_i}t_j)=K(\Gamma^k_{ij}t_k)=\Gamma^k_{ij}n_k\,.
\]
In other words, the Christoffel symbols for $\nabla^\perp$ and $\nabla$ are the same. 
This explains our choice of coordinate representation: we can do Ricci calculus as usual and write formulas such as 
\[
    \nabla_iH^j = \partial_iH^j + \Gamma^j_{ik} H^k
\]
or
\[
    \nabla_\ell h_{ijk} = \partial_\ell h_{ijk}-(\Gamma^s_{k\ell} h_{ijs}+\Gamma^s_{j\ell} h_{isk}+\Gamma^s_{i\ell} h_{sjk})\,.
\]
As for~\eqref{EqFormulaForh}, it allows us to express second derivatives of $y(x)$ in terms of $h$:
\begin{equation}\label{eq:ddy_h}
    \partial_{ij}y^\kb = 2c^{\kb m} h_{i j m} -c^{\kb m} c_{m\ib \jb} \partial_{i} y^\jb \partial_{j} y^\ib+ c^{k \mb} c_{i j \mb} \partial_{k} y^\kb\,,
\end{equation}
where we prefer to work with the $(0,3)$ version $h_{ijm}$. Summing~\eqref{EqFormulaForh} and~\eqref{EqConnection1} and we also obtain
\begin{equation} \label{eq:Gamma-Gammat-h}
    \Gamma^k_{ij} = \Gammat^k_{ij}-h^k_{ij} = c^{k\mb} c_{ij\mb} - h^k_{ij}\,.
\end{equation}

\begin{proof}[Proof of Prop.~\ref{prop:connections}]
    Let $U$ and $V$ be two vector fields on $X\times Y$ that are tangent to $\Sigma$. We have 
    \begin{align*}
    \nablat_U V&=U^i\nablat_{e_i}( V^je_j+ V^\jb e_\jb)+U^\ib\nablat_{e_\ib}( V^je_j+ V^\jb e_\jb)\\
    &= U^i\partial_i V^je_j +U^i V^j\hGamma_{ij}^ke_k + U^i\partial_i V^\jb e_\jb  \\ 
    &\qquad + U^\ib\partial_\ib V^je_j+U^\ib\partial_\ib V^\jb e_\jb
    +U^\ib V^\jb\hGamma_{\jb\ib}^\kbar e_\kbar\,.
\end{align*}
Since $U$ and $ V$ are tangent to $\Sigma$ we have $U^\ib=\partial_iy^\ib U^i$ and $ V^\jb=\partial_jy^\jb V^j$ by~\eqref{eq:calculus:rule1}. Thus 
\[
    U^i\partial_i V^j+U^\ib\partial_\ib V^j=U^i\partial_i\{ V^j(x,y(x))\}\,.
\]
Moreover using~\eqref{eq:calculus:rule1} twice we write 
\begin{align*}
    U^i\partial_i V^\jb+U^\ib\partial_\ib V^\jb &= U^i\partial_i\{ V^\jb(x,y(x))\}= U^i\partial_i\{ \partial_jy^\jb(x)V^\jb(x,y(x))\}\\
    &= \frac{\partial y^\jb}{\partial x^j}U^i\partial_i\{ V^j(x,y(x))\}+U^i V^j \frac{\partial^2 y^\jb}{\partial x^i\partial x^j}\,.
\end{align*}
Grouping terms we deduce that 
\begin{equation*}
    \nablat_U V = U^i\partial_i\{ V^j(x,y(x))\} t_j + U^i V^j \frac{\partial^2 y^{\bar k}}{\partial x^i\partial x^j}e_{\bar k} + U^i V^j\hGamma_{ij}^ke_k  + U^\ib V^\jb\hGamma_{\ib\jb}^\kbar e_\kbar\,.
\end{equation*}
By using~\eqref{eq:ei-eib-ti-ni} we can express each term in the frame $(t_i,n_j)$ and match against the desired expression
\[
    \nablat_U V = U^i\partial_i\{ V^j(x,y(x))\} t_j + U^iV^j\Gamma^k_{ij}t_k + U^iV^jh^k_{ij}n_k\,.
\]
This gives~\eqref{EqFormulaForh} and~\eqref{EqConnection1}. As for~\eqref{eq:def-H} it directly follows from~\eqref{eq:calculus:rule3}. 
\end{proof}

\begin{proof}[Proof of Prop.~\ref{prop:K}]
    The commuting property $K(\nablat_UV)=\nablat_UK(V)$ can be checked directly. Take $U=e_i$ and $V=ve_j$ for a scalar function $v$. Then $KV=V$ and 
    \[
        \nablat_UV = \partial_iv^je_j + v\Gammat^k_{ij}e_k\,.
    \]
    Therefore $K(\nablat_UV)=\nablat_UV$. 
    
    When $V=ve_\jb$, we have $KV=-V$ and 
    \[
        \nablat_UV = \partial_iv^je_\jb + 0\,.
    \]
    Therefore $K(\nablat_UV)=-\nablat_UV$. A similar argument works for $U=e_\ib$. 

    Formula~\eqref{eq:K-commutes} then follows since $\nabla^\perp_U N$ is the normal component of $\nablat_UN=K(\nablat_UV)=K(\nabla_UV + h(U,V))$ whose normal component is $K(\nabla_UV)$ since $K$ maps $T\Sigma$ to $T^\perp\Sigma$ and vice versa.
\end{proof}

\subsection{Curvatures: the Gauss equation}

The Gauss equation relates several intrinsic and extrinsic curvatures of the embedded manifold $\Sigma$. First let us define the curvature tensor intrinsic to $\Sigma$,
\begin{equation} \label{eq:def-R}
    R(U,V)W = \nabla_V \nabla_UW - \nabla_U\nabla_VW - \nabla_{[V,U]}W\,.
\end{equation}
Note that it follows the same sign convention as~\eqref{eq:def-Rt}. The Gauss equation is~\cite{ONeillBook}
\begin{equation} \label{eq:gauss-general}
    \bracket{\Rt(U,V)W,Z}=\bracket{R(U,V)W,Z} + \bracket{h(U,Z), h(V,W)} - \bracket{h(V,Z), h(U,W)} \,,
\end{equation}
where $U,V,W,Z$ are any tangent vectors and $\bracket{-,-} = \gt(-,-)$. In coordinates we obtain the following result.

\begin{lemma}[Gauss equation] \label{lemma:Gauss-equation}
    We have
    \begin{multline} \label{eq:formula-gauss-other}
        \Rt_{i \jb \kb \ell} \partial_j y^\jb \partial_k y^\kb+\Rt_{i \jb k \lb} \partial_j y^\jb \partial_\ell y^\lb + \Rt_{\ib j \kb \ell} \partial_i y^\ib \partial_k y^\kb+\Rt_{\ib j k \lb} \partial_i y^\ib \partial_\ell y^\lb \\
        = R_{i j k \ell} + h_{i k s} h_{j \ell t} u^{s t} - h_{i \ell s} h_{j k t} u^{s t}\,.
    \end{multline}
    Contracting twice leads to the formula    
    \begin{equation} \label{eq:formula-gauss}
        \Rt_{i \jb k \lb} u^{i k} u^{\jb \lb} = - \frac18 \Rt + \frac12 R - \frac12 \bracket{H,H} + \frac12 \bracket{h,h}\,.
    \end{equation}
\end{lemma}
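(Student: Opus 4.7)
The plan is to derive~\eqref{eq:formula-gauss-other} by specializing the general Gauss equation~\eqref{eq:gauss-general} to the tangent frame $(t_i)$ on $\Sigma$, and then obtain~\eqref{eq:formula-gauss} by contracting against $u^{ik}u^{j\ell}$. For the first formula, plug $U=t_i$, $V=t_j$, $W=t_k$, $Z=t_\ell$ into~\eqref{eq:gauss-general} and expand each $t_a = e_a + \partial_a y^{\bar a}\,e_{\bar a}$ by multi-linearity of the $(0,4)$-tensor $\Rt$. The block structure of~\eqref{EqCurvatureTensorRelations} kills every term except those where each of the pairs (slot 1, slot 2) and (slot 3, slot 4) carries exactly one barred and one unbarred index, leaving precisely the four terms on the LHS of~\eqref{eq:formula-gauss-other}. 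On the right-hand side, $\bracket{R(t_i,t_j)t_k,t_\ell}=R_{ijk\ell}$ by definition of $R$ in the $(t_i)$ basis, and for the second fundamental form terms one writes $h(t_a,t_b)=h_{ab}^s n_s$ and uses $\gt(n_s,n_t)=-g_{st}$ (a direct consequence of~\eqref{eq:K-skew} and $K^2=\id$) to get $\bracket{h(t_a,t_b),h(t_c,t_d)} = -h_{abs}h_{cdt}u^{st}$; total symmetry of $h$ (Prop.~\ref{prop:symmetryofh}) then rewrites the combination into the stated RHS.

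For~\eqref{eq:formula-gauss}, contract~\eqref{eq:formula-gauss-other} with $u^{ik}u^{j\ell}$. On the right, $R_{ijk\ell}u^{ik}u^{j\ell}=R$ by definition of the scalar curvature on $\Sigma$, while the two $h$-contributions become $-\bracket{H,H}$ and $-\bracket{h,h}$ respectively: the first using $u^{ik}h_{iks}=H_s$ together with the pseudo-norm relation $\bracket{H,H}=-H^sH_s$, the second by relabeling dummy indices via the full symmetry of $h$ to bring the contraction to the canonical form $u^{aa'}u^{bb'}u^{cc'}h_{abc}h_{a'b'c'}$. On the left, denote the four terms $T_1,\ldots,T_4$ in the order written. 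I expect $T_2$ and $T_3$ each to equal $\Rt_{i\jb k\lb}u^{ik}u^{\jb\lb}$: for $T_2$, use the identity $u^{\jb\lb}=\partial_j y^\jb\,\partial_\ell y^\lb\, u^{j\ell}$ obtained by inverting~\eqref{eq:calculus:rule2}; for $T_3$, combine the same identity with the antisymmetries $\Rt_{\ib j\kb\ell}=-\Rt_{j\ib\kb\ell}=\Rt_{j\ib\ell\kb}$ and rename dummy indices.

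I expect $T_1$ and $T_4$ each to contribute $\tfrac18\Rt$. The key ingredients are the identities $u^{ik}\partial_k y^\kb = -c^{\kb i}$ and $u^{j\ell}\partial_j y^\jb = -c^{\jb \ell}$, obtained by contracting~\eqref{eq:calculus:rule2} with $u$-inverses. Plugging into $T_1$ produces $T_1 = \Rt_{i\jb\kb\ell}\,c^{\kb i}c^{\jb\ell}$; the Kim--McCann ``first Bianchi'' symmetry $\Rt_{i\jb\kb\ell}=\Rt_{i\kb\jb\ell}$ (read off from the explicit formula~\eqref{EqCurvatureTensorRelations}) then reshapes this into the standard scalar contraction $\Rt_{abcd}c^{ab}c^{cd}$, which equals $\tfrac18\Rt$ by~\eqref{EqRicciHat}. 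A parallel argument using pair-swap together with the same Bianchi-type symmetry handles $T_4$. Assembling, $\tfrac14\Rt + 2\,\Rt_{i\jb k\lb}u^{ik}u^{\jb\lb} = R-\bracket{H,H}+\bracket{h,h}$, and solving for $\Rt_{i\jb k\lb}u^{ik}u^{\jb\lb}$ yields~\eqref{eq:formula-gauss}. The main obstacle throughout will be the careful bookkeeping of index symmetries in the pseudo-Riemannian Kim--McCann setting, especially exploiting the barred-index Bianchi symmetry to collapse several distinct contraction patterns onto the scalar curvature.
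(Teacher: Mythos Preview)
Your proposal is correct and follows essentially the same route as the paper: specialize the abstract Gauss equation to the frame $(t_i)$, use the vanishing pattern of $\Rt$ to reduce the $16$ terms to four, and then contract with $u^{ik}u^{j\ell}$, converting the Jacobian factors via $u^{j\ell}\partial_j y^{\jb}=-c^{\jb\ell}$ and invoking the extra symmetry $\Rt_{i\jb\kb\ell}=\Rt_{i\kb\jb\ell}$ together with~\eqref{EqRicciHat} to recognize the scalar curvature. The paper's proof is identical in structure; the only cosmetic difference is that it groups the four contracted terms directly as $2c^{i\kb}c^{\jb\ell}\Rt_{i\jb\kb\ell}+2u^{ik}u^{\jb\lb}\Rt_{i\jb k\lb}$ rather than labeling them $T_1,\dots,T_4$. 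One small wording issue: when you say ``the two $h$-contributions become $-\bracket{H,H}$ and $-\bracket{h,h}$'', make clear you mean the raw contractions $H_sH_tu^{st}$ and $h_{i\ell s}h_{jkt}u^{st}u^{ik}u^{j\ell}$ themselves (both equal to minus the corresponding pseudo-norm, since $\gt(n_s,n_t)=-g_{st}$); the second term in~\eqref{eq:formula-gauss-other} already carries a minus sign, which is why your final assembled right-hand side correctly reads $R-\bracket{H,H}+\bracket{h,h}$.
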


\begin{proof}
    In formula~\eqref{eq:gauss-general} choose $U=t_i,V=t_j,W=t_k,Z=t_\ell$. Then writing $t_i=e_i+\partial_iy^\ib e_\ib$ and similarly for $t_j$, $t_k$ and $t_\ell$ we have 
    \[
        \bracket{\Rt(t_i,t_j,t_k),t_\ell} = \Rt_{i \jb \kb \ell} \partial_j y^\jb \partial_k y^\kb+\Rt_{i \jb k \lb} \partial_j y^\jb \partial_\ell y^\lb + \Rt_{\ib j \kb \ell} \partial_i y^\ib \partial_k y^\kb+\Rt_{\ib j k \lb} \partial_i y^\ib \partial_\ell y^\lb\,.
    \]
    As for the right-hand side of~\eqref{eq:gauss-general}, we have 
    \begin{equation*}
        \bracket{h(U,Z), h(V,W)} = \bracket{h_{i\ell}^sn_s, h_{jk}^tn_t} = h_{i\ell}^sh_{jk}^t \bracket{n_s,n_t} = - h_{i\ell}^sh_{jk}^t u_{st}\,.
    \end{equation*}
    Lowering indices and repeating the argument for $\bracket{h(V,Z), h(U,W)}$ leads to~\eqref{eq:formula-gauss-other}.

    Next we perform the contraction by mutliplying by $u^{ik} u^{j\ell}$. In the left-hand side of~\eqref{eq:formula-gauss-other} we obtain 
    \begin{multline*}
        c^{i\kb} c^{\jb\ell} \Rt_{i \jb \kb \ell} 
            + u^{ik} u^{\jb\lb} \Rt_{i \jb k \lb} 
            + u^{\ib\kb}u^{j\ell} \Rt_{\ib j \kb \ell} 
            + c^{\ib k} c^{j\lb} \Rt_{\ib j k \lb}\\ = 2 u^{ik} u^{\jb\lb} \Rt_{i \jb k \lb} + 2c^{i\kb} c^{\jb\ell} \Rt_{i \jb \kb \ell} = 2 u^{ik} u^{\jb\lb} \Rt_{i \jb k \lb} + \frac14 \Rt\,,
    \end{multline*}
    using the symmetries of $\Rt$ and~\eqref{EqRicciHat}. The right-hand side of~\eqref{eq:formula-gauss-other} becomes 
    \[
        R + H_sH_tu^{st} - h_{i \ell s} h_{j k t} u^{s t}u^{ik} u^{j\ell} = R - \bracket{H,H} + \bracket{h,h}.
    \]
    We recall that the minus sign in front of the brackets occurs because $\bracket{n_s,n_t}=-u_{st}$. An alternative point of view is that following the convention outlined in Notation~\ref{notation:normal}, $H_s$ describes here $H'=HK$, and $H_sH_tu^{st}=\bracket{KH,KH}=\bracket{-KKH,H}=-\bracket{H,H}$. Similarly $\bracket{Kh,Kh}=-\bracket{h,h}$.
\end{proof}

\subsection{Various formulas}
We collect below useful formulas for our geometric Laplace expansion.
\begin{lemma}[Laplacian on $X \times Y$]\label{lemma:Laplacians}
    Let $f(x,y)$ be a scalar function. The Laplacian of $f$ with respect to $\gt$ is 
    \begin{equation}\label{EqLaplacianXY}
        \Deltat f = -4c^{\ib j}\partial_{\ib j}f\,.
    \end{equation}
\end{lemma}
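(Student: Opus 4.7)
My plan is a direct computation using the divergence-free form of the Laplace-Beltrami operator (valid for pseudo-Riemannian metrics as well)
\[
\tilde\Delta f = \tilde g^{AB}\bigl(\partial_A\partial_B f - \tilde\Gamma^{C}_{AB}\partial_C f\bigr),
\]
where capital indices run over both barred and unbarred coordinates on $X\times Y$.

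The first step is to compute the inverse metric $\tilde g^{AB}$. From Definition~\ref{def:Kim-McCann} the matrix of $\tilde g$ has antidiagonal block structure with $\tilde g_{i\bar j} = \tilde g_{\bar j i} = -\tfrac12 c_{i\bar j}$ and $\tilde g_{ij}=\tilde g_{\bar i\bar j}=0$. Inverting this block-antidiagonal matrix is immediate: one finds $\tilde g^{i\bar j} = \tilde g^{\bar j i} = -2\,c^{\bar j i}$ and $\tilde g^{ij}=\tilde g^{\bar i\bar j}=0$, as can be verified by checking $\tilde g^{i\bar j}\tilde g_{\bar j k} = c^{\bar j i}c_{k\bar j}=\delta^i_k$.

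The second step is to observe that only the mixed components of $\tilde g^{AB}$ are nonzero, so
\[
\tilde g^{AB}\partial_A\partial_B f = 2\,\tilde g^{i\bar j}\partial_i\partial_{\bar j} f = -4\,c^{\bar j i}\partial_{i\bar j}f.
\]
For the Christoffel term, the key observation is the one recorded in~\eqref{EqChristoffelsKimMcCann}: the only nonzero Christoffel symbols of $\tilde\nabla$ have either all indices unbarred or all indices barred. In particular $\tilde\Gamma^{C}_{i\bar j}=0$ for every $C$ and every mixed pair $(i,\bar j)$. Combined with the fact that $\tilde g^{AB}$ is supported on mixed index pairs, the entire Christoffel contribution $\tilde g^{AB}\tilde\Gamma^{C}_{AB}\partial_C f$ vanishes.

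Combining these two observations yields $\tilde\Delta f = -4\,c^{\bar i j}\partial_{\bar i j}f$ after relabeling dummy indices, which is~\eqref{EqLaplacianXY}. There is no genuine obstacle here; the proof is purely a bookkeeping exercise that exploits the antidiagonal block form of $\tilde g$ and the vanishing of ``mixed'' Christoffel symbols, both of which are already established features of the Kim--McCann geometry recalled in Section~\ref{sec:review-kim-mccann}.
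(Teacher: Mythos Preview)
Your proof is correct and follows essentially the same approach as the paper: both compute the inverse Kim--McCann metric, use the formula $\Deltat f=\gt^{AB}(\partial_{AB}f-\Gammat^C_{AB}\partial_Cf)$, and exploit that the only nonzero entries of $\gt^{AB}$ are the mixed ones while $\Gammat^C_{i\jb}=0$, so the Christoffel contribution drops out. The paper presents this by writing the full Hessian matrix and then contracting against the block-antidiagonal inverse metric, but the content is identical to yours.
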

\begin{proof}
    The standard formula for the Hessian in coordinates is
    \[
        \nablat^2 f(e_\alpha,e_\beta) = \partial_{\alpha\beta}f - \Gammat_{\alpha\beta}^\gamma\partial_\gamma f\,,
    \]
    where Greek letters $\alpha,\beta,\dots$ denote either barred or unbarred indices. By~\eqref{EqChristoffelsKimMcCann}, 
    \begin{equation*}
        [\nablat^2 f] = \begin{pmatrix}
                \partial_{ij}f-\Gammat^k_{ij}\partial_kf & \partial_{i\jb} f\\
                \partial_{\ib j}f & \partial_{\ib\jb}f-\Gammat^\kb_{\ib\jb}\partial_\kb f
            \end{pmatrix}\,.
    \end{equation*}
    Contracting against the inverse of the Kim--McCann metric $\begin{pmatrix}
        0 & -2c^{i\jb} \\
        -2c^{\ib j} & 0
    \end{pmatrix}$ gives us~\eqref{EqLaplacianXY}.

\end{proof}

\begin{lemma}[Derivatives of $\mt$] \label{lemma:derivatives-mt}
    On $X\times Y$ we have the formulas
    \begin{equation} \label{eq:formula-dmt}
        \partial_i \mt = c^{j\kb} c_{ij\kb}\,\mt\,,
    \end{equation}
    and
    \begin{equation}\label{eq:formula-ddmt}
        \partial_{ij}\mt = \big(c^{k \lb} c_{i j k \lb} 
        - c^{k \nb} c^{\lb m} c_{i k\lb } c_{j m \nb} 
        + c^{k\lb} c^{m\nb} c_{ik\lb} c_{jm\nb}\big)\,\mt\,.
    \end{equation}
\end{lemma}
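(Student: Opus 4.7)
The plan is to use two classical tools: Jacobi's formula for the derivative of a determinant, and the formula for the derivative of an inverse matrix. Since $\mt=\abs{\det[c_{i\jb}]}$ is assumed positive on the domain under consideration, and taking absolute values does not affect derivatives where the argument has constant sign, I will work locally with $\mt=\det M$ (up to an irrelevant global sign) where $M$ denotes the $d\times d$ matrix with entries $M_{j\kb}=c_{j\kb}$, whose inverse has entries $c^{\kb j}$.

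For the first formula~\eqref{eq:formula-dmt}, I apply Jacobi's formula in the form $\partial_i\log\det M=\on{tr}(M^{-1}\partial_iM)$. Since $\partial_iM_{j\kb}=c_{ij\kb}$, this immediately gives
\[
\frac{\partial_i\mt}{\mt}=c^{\kb j}c_{ij\kb}=c^{j\kb}c_{ij\kb},
\]
which is~\eqref{eq:formula-dmt}.

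For the second formula~\eqref{eq:formula-ddmt}, I differentiate~\eqref{eq:formula-dmt} with respect to $x^j$ using the product rule:
\[
\partial_{ij}\mt=(\partial_j\mt)\,c^{k\lb}c_{ik\lb}+\mt\,\partial_j\!\left(c^{k\lb}c_{ik\lb}\right).
\]
The first piece, using~\eqref{eq:formula-dmt} again, contributes $\mt\,c^{m\nb}c^{k\lb}c_{jm\nb}c_{ik\lb}$, which is the last of the three terms in~\eqref{eq:formula-ddmt}. The second piece splits into $(\partial_jc^{k\lb})c_{ik\lb}+c^{k\lb}c_{ijk\lb}$; the latter supplies the first term of~\eqref{eq:formula-ddmt}. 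It only remains to compute $\partial_jc^{k\lb}$: differentiating the defining identity $c^{k\lb}c_{s\lb}=\delta^k_s$ (wait, rather $c_{s\lb}c^{\lb k}=\delta^k_s$) yields
\[
\partial_jc^{k\lb}=-c^{k\nb}c^{\lb m}c_{jm\nb},
\]
and substituting produces the middle term $-c^{k\nb}c^{\lb m}c_{ik\lb}c_{jm\nb}$ of~\eqref{eq:formula-ddmt} after renaming summation indices.

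There is no real obstacle here beyond careful bookkeeping of barred/unbarred indices and of the placement of indices on the inverse matrix $c^{\kb j}$; in particular, one must keep track that an upper barred index always pairs with a lower barred index and likewise for unbarred indices, since the notation never uses $c_{i\jb}$ or $c^{\jb i}$ to raise or lower indices.
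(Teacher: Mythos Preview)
Your proof is correct and follows essentially the same approach as the paper: the paper invokes the standard identity $\partial_i\log\mt=\Gammat^j_{ij}$ (which is Jacobi's formula rephrased via Christoffel symbols) for~\eqref{eq:formula-dmt}, and then simply says~\eqref{eq:formula-ddmt} ``follows from~\eqref{eq:formula-dmt} by taking a derivative'' without spelling out the derivative-of-inverse step that you carry out explicitly. Your parenthetical self-correction and closing commentary should be cleaned up, but the mathematics is sound and matches the paper's argument.
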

\begin{proof}
    The derivative of the determinant is given by the formula $\partial_\alpha\log\abs{\det \gt_{\beta\gamma}} = \gt^{\beta\gamma}\partial_\alpha\gt_{\beta\gamma}$, which yields the formula giving the derivative of the metric volume form standard in semi-Riemannian geometry 
    \[
        \partial_i\log\mt = \Gammat^j_{ij}\,.
    \]
    Note that here $\mt$ is equal to the volume form up to a multiplicative constant and thus satisfies the same formula. This gives us~\eqref{eq:formula-dmt}. 

    Formula~\eqref{eq:formula-ddmt} follows from~\eqref{eq:formula-dmt} by taking a derivative.
\end{proof}

\begin{lemma}[Derivatives of $u$] \label{lemma:derivatives-c-divergence}    

  On $\Sigma$ we have the formulas
  \begin{equation}\label{eq:formula-uijk}
    u_{ijk} = -2h_{i j k}-(c_{\ib j k} \partial_{i} y^\ib+c_{i \jb k} \partial_{j} y^{\jb}+c_{i j \kb} \partial_{k} y^\kb)\,,
  \end{equation}
  and  
\begin{multline} \label{eq:formula-uijkl}
u_{i j k \ell} = 
-2\partial_\ell {h_{i j k}} - (c_{\ib j k \lb} \partial_i y^\ib 
+ c_{i \jb k \lb} \partial_j {y^\jb} 
+ c_{i j \kb \lb} \partial_k {y^\kb}) \,\partial_\ell {y^\lb}\\
- (c_{\ib j k \ell} \partial_i y^\ib + c_{i \jb k \ell} \partial_j y^\jb + c_{i j \kb \ell} \partial_k y^\kb + c_{i j k \lb} \partial_\ell y^\lb )\\
+ \partial_\ell {y^\lb} c^{\sbar t} (c_{\ib \lb t} c_{j k \sbar} \partial_i y^\ib 
+ c_{i k \sbar} c_{\jb \lb t} \partial_j y^\jb
+ c_{i j \sbar} c_{\kb \lb t} \partial_k y^\kb )\\
+ u^{\sbar \tb}(c_{i j \sbar} c_{k \ell \tb} + c_{i k \sbar} c_{j \ell \tb}+c_{i \ell \sbar} c_{j k \tb}) 
- 2c^{\sbar t}(c_{i j \sbar} h_{k \ell t} + c_{i k \sbar} h_{j \ell t} + c_{j k \sbar} h_{i \ell t}) \,.
\end{multline}

\end{lemma}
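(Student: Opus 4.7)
The plan is to derive both formulas by iterated chain-rule differentiation of the identity $u_i(x,y(x))\equiv 0$ along $\Sigma$, exploiting the fact that any derivative of $u$ containing at least one barred and one unbarred index equals the corresponding derivative of $c$, and then eliminating second derivatives of the graph $y(x)$ via~\eqref{eq:ddy_h}.

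For~\eqref{eq:formula-uijk}: since $\inf_x u(x,y) = 0$ for each $y$, the partial $u_i$ vanishes on $\Sigma$, so $u_i(x,y(x)) \equiv 0$ as a function of $x$. Differentiating once with respect to $x^j$ recovers~\eqref{eq:calculus:rule2}, while differentiating again with respect to $x^k$ and replacing every mixed $u$-derivative by the corresponding $c$-derivative yields
\begin{equation*}
0 = u_{ijk} + c_{ij\kb}\partial_k y^\kb + c_{i\jb k}\partial_j y^\jb + c_{i\jb\kb}\partial_j y^\jb\partial_k y^\kb + c_{i\jb}\partial_{jk}y^\jb.
\end{equation*}
I then substitute~\eqref{eq:ddy_h} for $\partial_{jk}y^\jb$ after contraction with $c_{i\jb}$: the identity $c_{i\jb}c^{\jb m}=\delta_i^m$ extracts $2h_{ijk}$; the quadratic piece $c^{\jb m}c_{m\sbar\tb}$ exactly cancels $c_{i\jb\kb}\partial_j y^\jb\partial_k y^\kb$ thanks to the symmetry $c_{i\jb\kb}=c_{i\kb\jb}$; and the remaining Christoffel contribution simplifies to $-c_{\ib jk}\partial_i y^\ib$ once one uses the matrix identity $u_{i\ell}c^{\ell\mb}=-\partial_i y^\mb$, itself a direct rewriting of~\eqref{eq:calculus:rule2}. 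Rearranging produces~\eqref{eq:formula-uijk}.

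For~\eqref{eq:formula-uijkl} I would promote~\eqref{eq:formula-uijk} to an identity on $\Sigma$ and differentiate it once more in $x^\ell$. Since $u_{ijk\lb}=c_{ijk\lb}$, the chain rule gives
\begin{equation*}
u_{ijk\ell} = \partial_\ell\big[u_{ijk}(x,y(x))\big] - c_{ijk\lb}\partial_\ell y^\lb,
\end{equation*}
so the task reduces to differentiating each term on the right-hand side of~\eqref{eq:formula-uijk}. The $-2h_{ijk}$ term produces $-2\partial_\ell h_{ijk}$; the chain rule applied to each $c$-term with three lower indices generates both the ``$c_{\ldots\ell}$'' and ``$c_{\ldots\lb}\partial_\ell y^\lb$'' fourth-derivative families appearing in~\eqref{eq:formula-uijkl}; and differentiating the $\partial y$ factors introduces $\partial_{i\ell}y^\ib$, $\partial_{j\ell}y^\jb$, $\partial_{k\ell}y^\kb$, which a final application of~\eqref{eq:ddy_h} converts into the $-2c^{\sbar t}(c_{ij\sbar}h_{k\ell t}+\cdots)$ correction, the $c^{\sbar t}$-terms on the third line of~\eqref{eq:formula-uijkl}, and (after using $c_{i\jb}\partial_\ell y^\jb=-u_{i\ell}$ to reidentify the metric inverse in the $y$-chart as $u^{\sbar\tb}$) the $u^{\sbar\tb}$-terms on the fourth line.

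The main obstacle will be the combinatorial bookkeeping of this second step: a few dozen monomials in $c$, $\partial y$, $\partial^2 y$ and $h$ must be regrouped into the symmetric form~\eqref{eq:formula-uijkl}. Two useful checks: first, the final answer must be totally symmetric in $(i,j,k,\ell)$ even though the derivation singles out $\ell$, which can be exploited via symmetrization to reorganize terms; second, in the bilinear case $c(x,y)=-x\cdot y$ all third and fourth derivatives of $c$ and the second fundamental form vanish, so both sides of~\eqref{eq:formula-uijkl} reduce to $0$, giving a quick sign check.
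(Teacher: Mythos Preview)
Your proposal is correct and follows essentially the same route as the paper: differentiate the vanishing of $u_i$ (equivalently, the relation~\eqref{eq:calculus:rule2}) along $\Sigma$ using the tangential operator $t_\ell=\partial_\ell+\partial_\ell y^{\lb}\partial_{\lb}$, replace mixed derivatives of $u$ by those of $c$, and eliminate all second derivatives of the graph via~\eqref{eq:ddy_h}. The paper's proof is the same computation, likewise checked symbolically with Cadabra for the fourth-order case.
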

\begin{proof}
    When $(x,y)\in\Sigma$ we have the relation 
    \[
        u_{ij}(x,y)=-c_{i\jb}(x,y)\partial_jy^\jb(x)\,,
    \]
    see~\eqref{eq:calculus:rule2}. Differentiating both sides in the direction $t_k$, i.e. applying the operator $\nablat_{t_k}=\partial_k + \partial_ky^\kb\partial_\kb$, we obtain in the left-hand side $u_{ijk} + \partial_ky^\kb u_{ij\kb}$. Since $u$ and $c$ only differ by functions of $x$ only and $y$ only their mixed derivatives always agree, so $u_{ij\kb}=c_{ij\kb}$. 
    In the right-hand side we obtain various derivatives of $c$ and $y(x)$ and we use~\eqref{eq:ddy_h} to substitute second derivatives of the map $y(x)$. This leads to~\eqref{eq:formula-uijk}.
    Doing the same process again, we differentiate~\eqref{eq:formula-uijk} in direction $t_\ell$. In the left-hand side we obtain $u_{ijk\ell} + \partial_\ell y^\lb c_{ijk\lb}$ and in the right-hand side we obtain derivatives of various quantities. We systematically replace second derivatives of $y(x)$ by $h$ quantities thanks to~\eqref{eq:ddy_h}. 
    
    For completeness we also verified formulas~\eqref{eq:formula-uijk} and~\eqref{eq:formula-uijkl} using the symbolic algebra program Cadabra~\cite{peeters2007,cadabra,peeters2007arXiv} which specializes in symbolic tensor computations.\footnote{Code available at \url{https://github.com/flavienleger/geometric-laplace}.}
\end{proof}

Let $f(x,y)$ be a scalar function. The gradient of $f$ with respect to $\gt$ is the vector field $\Gt$ defined by $\gt(\Gt,U)=\nablat_Uf$ for any vector field $U$. On $\Sigma$, we can decompose 
\begin{equation*}
    \Gt=G+N\,,
\end{equation*}
where $G$ and $N$ are tangent and normal vector fields, respectively. Following our convention to only work with coordinates on $T\Sigma$ we then define $N'=KN$. $G$ and $N'$ are expressed in coordinates as 
\[
    G=G^i t_i, \quad N'=N^i t_i.
\]
We also note that $G$ is the gradient of $f$ on $(\Sigma,g)$. Sometimes, when the distinction between vectors and covectors is not so important we write 
\begin{equation}\label{eq:def-G-N}
    \nablat f = \Gt, \quad \nabla f=G\quad\text{and}\quad\nabla^N\!f = N. 
\end{equation}

\begin{lemma}[Derivatives of $f$] \label{lemma:derivatives-f}
    On $\Sigma$ we have the formulas
    \begin{equation} \label{eq:formula-dif}
        \partial_if = \frac12 G_i - \frac12 N_i
    \end{equation}
    and 
    \begin{equation} \label{eq:formula-dijf}
        \partial_{i j} f = -\partial_{\ib j}{f} \partial_{i} y^\ib+\frac{1}{2}\partial_{i} G_{j} - \frac{1}{2}\partial_{i} N_{j}\,.
    \end{equation}
\end{lemma}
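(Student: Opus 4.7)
The first identity is a direct computation from the defining relation of $\Gt$ combined with the tangent/normal decomposition on $\Sigma$. The second identity follows from the first by differentiating along $\Sigma$ and accounting for the constraint $y=y(x)$ via the chain rule.

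First I would express $\Gt$ in the coordinate frame $(e_i,e_\ib)$ on $\Sigma$. Since by convention $G=G^it_i$ and $N=N^in_i$, expanding $\Gt=G+N$ using $t_i=e_i+\partial_iy^\ib e_\ib$ and $n_i=e_i-\partial_iy^\ib e_\ib$ yields
\begin{equation*}
\Gt^i=G^i+N^i,\qquad \Gt^\ib=(G^j-N^j)\,\partial_jy^\ib.
\end{equation*}
Because the Kim--McCann metric has only mixed components $\gt_{i\jb}=-\tfrac12 c_{i\jb}$, the defining relation $\gt(\Gt,e_\alpha)=\partial_\alpha f$ reduces to the global identity $\partial_if=-\tfrac12 c_{i\jb}\Gt^\jb$ on $X\times Y$. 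Substituting the expression of $\Gt^\jb$ on $\Sigma$ and then using the calculus rules $-c_{i\jb}\partial_jy^\jb=u_{ij}$ and $u_{ij}=g_{ij}$ from \eqref{eq:calculus:rule2}--\eqref{eq:calculus:rule3}, I get
\begin{equation*}
\partial_if=\tfrac12 g_{ij}(G^j-N^j)=\tfrac12 G_i-\tfrac12 N_i,
\end{equation*}
which is \eqref{eq:formula-dif}.

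For the second formula, I would read \eqref{eq:formula-dif} as an equality of functions of $x$ alone: the left-hand side is the pullback $x\mapsto\partial_jf(x,y(x))$ and the right-hand side involves the intrinsic tangent and normal components $G_j(x)$, $N_j(x)$. Differentiating both sides in $x^i$, the chain rule on the left produces $\partial_{ij}f+\partial_{\ib j}f\,\partial_iy^\ib$ while the right becomes $\tfrac12\partial_iG_j-\tfrac12\partial_iN_j$; rearranging then gives \eqref{eq:formula-dijf}. There is no genuine obstacle here; the only point requiring care is that \eqref{eq:formula-dif} is an identity on $\Sigma$ rather than on $X\times Y$, so the extra term $\partial_{\ib j}f\,\partial_iy^\ib$ appearing in \eqref{eq:formula-dijf} is exactly the transverse variation of $\partial_jf$ that is invisible to differentiation along the graph.
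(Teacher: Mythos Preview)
Your proposal is correct and essentially the same as the paper's proof. The only cosmetic difference is in how you organize the first computation: you expand $\Gt=G+N$ in the $(e_i,e_\ib)$ frame and then pair with $e_i$ using the block-off-diagonal form of $\gt$, whereas the paper pairs $G+N$ with $e_i$ directly and uses $e_i=\tfrac12(t_i+n_i)$ together with the skew-symmetry $\bracket{KN',e_i}=-\bracket{N',Ke_i}$; both routes land on $\tfrac12 g_{ij}(G^j-N^j)$. For \eqref{eq:formula-dijf} your argument is identical to the paper's (differentiate the pullback along $x$ and rearrange).
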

\begin{proof}
    For any vector field $U$, $\nablat_Uf = \bracket{G+N,U}$, denoting $\bracket{-,-} = \gt(-,-)$. Taking $U=e_i$, we have 
    \[
        \partial_i f = \nablat_{e_i} f = \bracket{G,e_i} + \bracket{KN', e_i} = \bracket{G,e_i} - \bracket{N',  Ke_i}\,.
    \]
    Note that $Ke_i=e_i$. Also in the basis $(t,n)$ we have $e_i = \frac12 (t_i+n_i)$ and since $G$ and $N'$ are tangent,
    \[
        \bracket{G,e_i} - \bracket{N',  Ke_i} = \bracket{G,\frac12 t_i} - \bracket{N',  \frac12 t_i}\,.
    \]
    We deduce that 
    \[
        \partial_if = \frac12 G^j \bracket{t_j,t_i} - \frac12 N^j \bracket{t_j,  t_i} = \frac12 G^j g_{ij}- \frac12 N^j g_{ij}\,.
    \]
    This proves~\eqref{eq:formula-dif}.

    To obtain~\eqref{eq:formula-dijf}, we keep in mind that in formula~\eqref{eq:formula-dif} the quantity $\partial_if$ is a function of $(x,y)$ while $G_i$ and $N_i$ are functions of $x$ (they are only defined on $\Sigma$ and read through the embedding $x\mapsto y(x)$). Therefore~\eqref{eq:formula-dif} should be understood as 
    \[
        \partial_if(x,y(x)) = \frac12 G_i(x) - \frac12 N_i(x)\,.
    \]
    Differentiating with respect to $x^j$ leads to the desired result, after switching indices $i,j$.
\end{proof}

\section{Geometric Laplace expansion} \label{sec:geometric_laplace}

\subsection{The main result}

Let $X$ and $Y$ be two domains of $\Rd$ and let $u$ be a nonnegative function on the product space $X\times Y$, such that $(X,Y,u)$ satisfies Assumption~\ref{assumption:XYu} below. The Kim--McCann geometry induced by $u$ provides the following structures: a pseudo-Riemannian metric $\gt$ over $X\times Y$ equipped with a special mapping $K$ called a para-complex structure, and a submanifold theory for the vanishing set of $u$. This material is presented in Section~\ref{sec:KimMcCann}.

We note that the Euclidean structure of $X$ and $Y$ inherited from $\Rd$ plays in itself no role in our geometric framework. Thus $X$ and $Y$ could be more general $d$-dimensional smooth manifolds.

\begin{table}[h]
  \centering
    \begin{tabular}{c|c|c} \label{table:geometric-quantities}
      & Quantity & Defined by...\\
      \hline
      On $X\times Y$ & $\gt$ & \eqref{def:Kim-McCann} \\
       & $K$ & \eqref{eq:def-K}\\
       & $\mt$ & \eqref{eq:def-mt}\\
       & $\nablat$ & Levi-Civita connection\\
       & $\Deltat$ & \eqref{EqLaplacianXY}\\
       & $\Rt$ & \eqref{EqRicciHat}\\
       \hline      
      On $\Sigma$ & $R$ & \eqref{eq:def-R} \\
       & $h$ & \eqref{EqSecondFundamentalFormFirst}\\
       & $H$ & \eqref{eq:def-H}\\
       & $\nabla f,\nabla^N\!f$ & \eqref{eq:def-G-N}
    \end{tabular}
  \caption{Geometric quantities}
\end{table}

From the Kim--McCann pseudo-metric can be derived a number of geometric quantities which appear in our Laplace formula. They are defined in Section~\ref{sec:KimMcCann} and listed in Table~\ref{table:geometric-quantities}.

\begin{assumption}[Assumptions on $X,Y,u$] \label{assumption:XYu}
  \leavevmode
\begin{enumerate}[(i)]
    \item \label{assumption:XYu:XY} $X$ and $Y$ are open subsets of $\Rd$ with smooth boundaries or no boundaries and $u$ is a nonnegative measurable function over $X\times Y$. 
    \item\label{assumption:XYu:Sigma} The vanishing set $\Sigma=\{(x,y)\in X\times Y : u(x,y)=0\}$ is the graph $(x,y(x))$ of a map $y\colon X\to Y$ which is a $C^3$-diffeomorphism onto its image.    
    \item\label{assumption:XYu:ball} There exists $\delta>0$ such that $Y$ contains the ball $B(y(x),\delta)$ for all $x\in X$.
    We then define a tubular neighborhood of $\Sigma$,
    \[
      \Sigma_\delta\coloneqq \{(x,y')\in X\times Y : y'\in B(y(x),\delta)\}.
    \] 
    \item\label{assumption:XYu:regularity} $u\in C^6(\Sigma_\delta)$. 
    \item \label{assumption:XYu:lowerbound} There exists $\lambda>0$ such that
    \begin{align*}
      u(x,y') &\ge \frac\lambda 2\abs{y'-y(x)}^2 \quad\text{for all $(x,y')\in \Sigma_\delta,$} \\    
      u(x,y') &\ge \frac\lambda 2\delta^2 \quad\text{for all $(x,y')\in (X\!\times\! Y)\setminus \Sigma_\delta.$}
    \end{align*}
    Here $\abs{\cdot}$ denotes the Euclidean norm in $Y$. 
\end{enumerate}
\end{assumption}

Let us make a few comments on these assumptions. About~\ref{assumption:XYu:XY}, note that the boundaries of $X$ and $\Sigma$ are in a one-to-one correspondance via the map $y(x)$. We ask for $\Sigma$ to have a smooth boundary since the Laplace formula~\eqref{eq:mainthm} contains a boundary term integrated over $\partial\Sigma$. As for $Y$, it doesn't in fact need to have a smooth boundary.

In~\ref{assumption:XYu:Sigma}, we only ask for the map $y(x)$ to be a diffeomorphism onto its image. Indeed we should have the freedom to extend the space $Y$ if we so wish (while keeping $X$ fixed), since in the Laplace method only the neighborhood of the points $y(x)$ really plays a role.

Finally, \ref{assumption:XYu:ball}, \ref{assumption:XYu:regularity} and \ref{assumption:XYu:lowerbound} are roughly the counterparts of Assumption~\ref{ass:ux-localized}\ref{ass:ux-localized:ball}, \ref{ass:ux-localized:regularity} and~\ref{ass:ux-localized:bound-below} respectively.

Before we state our main result, we define the norm 
\[
  \norm{r}_{L^1_xW^{4,\infty}_y(\Sigma_\delta)} = \int_X\int_Y \norm{r(x,\cdot)}_{W^{4,\infty}(B(y(x),\delta))}dx,
\]
where $W^{4,\infty}$ stands for the usual Sobolev space.

\begin{theorem} \label{thm:laplace}
  Suppose that $X$, $Y$ and $u$ satisfy Assumption~\ref{assumption:XYu} and let $r\in L^1_xW^{4,\infty}_y(\Sigma_\delta)\cap L^1(X\times Y)$. Then there exists a constant $C>0$ such that for all $\eps>0$,
  \begin{multline} \label{eq:mainthm}
      \iint_{X\times Y}\frac{e^{-u(x,y)/\eps}}{(2\pi\eps)^{d/2}} \,dr(x,y) = \int_{\Sigma} fdm \,+ \\ \eps\int_\Sigma\Big[- \frac18 \Deltat f +\frac14 \nablat_{\!H} f + f \Big(\frac{3}{32}{\Rt}  - \frac{1}{8}R + \frac{1}{24}\bracket{h,h} -\frac{1}{8}\bracket{H,H} \Big)\Big] \,dm  \\
      + \eps\int_{\partial\Sigma} \frac14 \bracket{\nabla f - K\nabla^N\!f + fKH, \nu}\,d\sigma + \eps^2\mathcal{R}(\eps),
    \end{multline}
    with $f\coloneqq dr/d\mt$ and with 
    \[
      \abs{\calR(\eps)} \le C \big(\norm{r}_{L^1_xW^{4,\infty}_y(\Sigma_\delta)} + \norm{r}_{L^1(X\times Y)}\big)\,.
    \]
    The constant $C$ depends on $\lambda$, $\delta$, $d$ and $\norm{D^ku}_{L^{\infty}(\Sigma_\delta)}$ for $3\le k\le 6$. In the boundary term, $\nu$ is the outer normal and $\sigma$ is the volume form induced by $g$ on $\partial\Sigma$. 
\end{theorem}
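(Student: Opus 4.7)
The plan is to treat $I(\eps)$ as a parametric Laplace integral in the fiber variable $y$ with parameter $x$, apply a standard quantitative multivariate Laplace expansion fiberwise, and then convert the resulting coordinate-level quantities into intrinsic objects on $\Sigma$ using the machinery developed in Section~\ref{sec:KimMcCann}. First, Assumption~\ref{assumption:XYu}\ref{assumption:XYu:lowerbound} yields that the contribution from $(X\times Y)\setminus\Sigma_\delta$ is $O(e^{-\lambda\delta^2/(2\eps)})\norm{r}_{L^1}$ and is absorbed into the $\eps^2\calR(\eps)$ remainder. On $\Sigma_\delta$ and for fixed $x$, $y\mapsto u(x,y)$ has a unique nondegenerate minimum at $y(x)$ with fiber Hessian $u_{\ib\jb}(x,y(x))$, so the quantitative Laplace formula from Section~\ref{SecQuantitativeLaplace} gives
\[
  \int_{B(y(x),\delta)}\!\!\frac{e^{-u(x,y)/\eps}}{(2\pi\eps)^{d/2}}r(x,y)\,dy = \frac{r(x,y(x))}{\sqrt{\det u_{\ib\jb}(x,y(x))}} + \eps\,J_1(x) + \eps^2\calR_x(\eps),
\]
where $J_1(x)$ is the standard four-term coordinate correction of~\eqref{EqSimpleLaplace} in the $y$ variable, involving only $u_{\ib\jb\kb}$, $u_{\ib\jb\kb\lb}$, $r_\ib$ and $r_{\ib\jb}$ evaluated at $(x,y(x))$. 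Integrating the leading term in $x$ and using the identity $\abs{\det c_{i\jb}} = \sqrt{\det u_{ij}\,\det u_{\ib\jb}}$, which follows from $u_{ij}=-c_{i\jb}\partial_jy^\jb$ and its swapped analogue $u_{\ib\jb}=-c_{i\jb}\partial x^i/\partial y^\ib$ together with $g_{ij}=u_{ij}$ and $\mt = \abs{\det c_{i\jb}}$, rewrites the leading term as $\int_\Sigma f\,dm$ with $f=r/\mt$.

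Second, I rewrite $\int_X J_1(x)\,dx$ in geometric terms. Derivatives of $r$ are converted to derivatives of $f$ using Lemma~\ref{lemma:derivatives-mt}. The higher derivatives $u_{\ib\jb\kb}$ and $u_{\ib\jb\kb\lb}$ are handled by the dual versions of Lemma~\ref{lemma:derivatives-c-divergence} (obtained by differentiating $u_\ib(x(y),y)=0$ in the same way that formula~\eqref{eq:formula-uijk} is obtained), which express them in terms of $h$, derivatives of $c$ and $\partial x/\partial y$. Partial $y$-derivatives of $f$ are decomposed into tangent and normal components along $\Sigma$ via $e_\ib = \tfrac12\tfrac{\partial x^i}{\partial y^\ib}(t_i-n_i)$, so that $-4c^{\ib j}\partial_{\ib j}f = \Deltat f$ by Lemma~\ref{lemma:Laplacians}, and the term coupling $r_\ib$ with $u^{\ib\jb}u^{\kb\lb}u_{\jb\kb\lb}$ is identified as $\nablat_{\!H}f$ using formula~\eqref{eq:def-H} for $H$. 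The three quadratic contractions $u^{\ib\jb}u^{\kb\lb}u_{\ib\jb\kb\lb}$, $u^{\ib\jb}u^{\kb\lb}u^{\mb\nb}u_{\ib\jb\kb}u_{\lb\mb\nb}$ and $u^{\ib\lb}u^{\jb\mb}u^{\kb\nb}u_{\ib\jb\kb}u_{\lb\mb\nb}$ then reduce after substitution to combinations of the purely extrinsic $\bracket{h,h}$, $\bracket{H,H}$ and of $\Rt_{i\jb k\lb}u^{ik}u^{\jb\lb}$; the Gauss equation in the form~\eqref{eq:formula-gauss} merges the latter with the intrinsic $R$ to produce precisely the combination $\tfrac{3}{32}\Rt-\tfrac18 R+\tfrac1{24}\bracket{h,h}-\tfrac18\bracket{H,H}$.

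Third, the boundary integral over $\partial\Sigma$ appears after an integration by parts on $\Sigma$ used to convert the remaining noncovariant coordinate divergences (such as $\partial_if$ contracted against traces involving $u^{\ib\jb}$) into covariant ones. Using $\Gamma^k_{ij}-\Gammat^k_{ij}=-h^k_{ij}$ from~\eqref{eq:Gamma-Gammat-h} to account for the difference between the two connections, together with Lemma~\ref{lemma:derivatives-f} to split $\nabla f$ into tangent and normal parts and Proposition~\ref{prop:K} to commute $K$ with the connection, the resulting divergence on $\Sigma$ yields by the divergence theorem the claimed flux $\tfrac14\bracket{\nabla f - K\nabla^N\!f + fKH,\nu}$. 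Finally, the remainder $\eps^2\calR(\eps)$ combines the exponentially small localization error (bounded by $\norm{r}_{L^1(X\times Y)}$) with the fiberwise remainder from Section~\ref{SecQuantitativeLaplace}, whose $x$-integration produces $\norm{r}_{L^1_xW^{4,\infty}_y(\Sigma_\delta)}$ and absorbs the pointwise dependence on $\norm{D^ku}_{L^\infty(\Sigma_\delta)}$ for $3\le k\le 6$.

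The main obstacle is the reorganization in the second paragraph: the raw fiber-Laplace output is a sum of noncovariant traces of partial derivatives of $u$ and $f$, and only its full combination carries intrinsic meaning. Identifying within it the Laplacian $\Deltat f$, the directional derivative $\nablat_{\!H}f$, the two scalar curvatures (via Gauss) and the extrinsic $\bracket{h,h}$ and $\bracket{H,H}$ contributions with their precise coefficients is a long, mechanical but error-prone calculation that simultaneously invokes Lemmas~\ref{lemma:Gauss-equation}, \ref{lemma:derivatives-mt}, \ref{lemma:derivatives-c-divergence}, \ref{lemma:derivatives-f} and the tangent/normal decomposition of Section~\ref{sec:second-fundamental-form}.
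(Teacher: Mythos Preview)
Your proposal is correct and follows essentially the same approach as the paper. The only cosmetic difference is that you freeze $x$ and expand in the $y$-fiber, while the paper explicitly swaps the roles of $X$ and $Y$ so as to expand in the $x$-fiber; the paper does this purely so that the coordinate identities of Section~\ref{sec:KimMcCann} (Lemmas~\ref{lemma:derivatives-c-divergence}--\ref{lemma:derivatives-f}, all written for unbarred indices $u_{ij},u_{ijk},u_{ijk\ell}$) apply verbatim rather than in the ``dual'' barred form you invoke. After that notational choice the two arguments coincide: the paper also splits the first-order integrand into the five pieces you describe, substitutes the derivative identities, recognizes $\Deltat f$, $\nablat_{\!H}f$, $\div(KH)$, applies the Gauss equation~\eqref{eq:formula-gauss}, and obtains the boundary term from the divergence theorem on $\Sigma$---exactly as in your outline.
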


In~\eqref{eq:mainthm}, $r$ should be seen as a test function, i.e. a smooth function we integrate against in order to understand $e^{-u(x,y)/\eps}$. Geometrically $r$ is a volume form over $X\!\times\! Y$, which is why we write it as $dr(x,y)$. Then on the right-hand side, $f$ is a scalar function defined as the ratio of two $2d$-forms ($2d$ is the dimension of $X\!\times\! Y$). Observe that $f$ and its derivatives only play a role on $\Sigma$. Therefore we only need to define $f$ on the tubular neighborhood $\Sigma_\delta$. Since $u$ is $C^6$ on $\Sigma_\delta$, the quantity $\mt=\abs{\det D^2_{xy}u}$ is well-defined on $\Sigma_\delta$ and thus so is $f$.

The geometric quantities that appear in~\eqref{eq:mainthm} can be looked up in Table~\ref{table:geometric-quantities}.

The brackets $\bracket{-,-}$ denote the pseudo metric $\gt(-,-)$. 
Since $\gt$ is a non-degenerate bilinear form it extends to tensors of any given type, and can therefore be applied to $h$.

\begin{remark}
Formula \eqref{eq:mainthm} suggests that $u$ not only induces the Kim--McCann metric but also the volume form denoted by $\mt$, which appears in the definition of $f$; these two natural objects being only needed in the neighborhood of $\Sigma$.
In particular, the measure $e^{-u(x,y)/\eps} d\mt(x,y)$ is defined without any reference to the volume form chosen in the Laplace formula and it can be integrated against the function $f$.
\end{remark} 

\paragraph{A convergence of measures.} Writing $r=f\mt$ and viewing $f$ as a scalar test function, Theorem~\ref{thm:laplace} can be interpreted as the measure $\mu_\eps\coloneqq (2\pi\eps)^{-d/2} e^{-u/\eps} \mt$ converging towards a distribution concentrated on $\Sigma$. More precisely, 
\[
  \frac{1}{\eps}\bigg\{\mu_\eps - m\delta_\Sigma - \eps\Big[-\frac 18\Deltat+ \frac 14 \nablat_{\!H}+ \Big(\text{curvatures}\Big)\Big] m\delta_\Sigma\bigg\} \to 0,
\]
as $\eps\to0$, where $\delta_\Sigma$ denotes the Dirac measure supported on $\Sigma$. The above convergence certainly holds in the sense of distribution (i.e. against smooth test functions with compact support). In view of the remainder term $\calR(\eps)$ it also holds in a certain dual Sobolev space that we don't wish to make explicit.

When $f=1$, i.e. $r=\mt$, 
in the right-hand side of~\eqref{eq:mainthm} the $\eps$ term simplifies into a mix of intrinsic and extrinsic curvature terms. These terms are strongly reminiscent of the \emph{second variation formula}, 
which describes how the volume of a family of submanifolds $\Sigma_t$ changes around $\Sigma\coloneqq\Sigma_0$ (see for instance~\cite{Simons1968}). Note that the volume of $\Sigma_t$ is the total mass of the measure $\delta_{\Sigma_t}$ and that the left-hand side of~\eqref{eq:mainthm} is the total mass of $\mu_\eps$ (assuming $r=\mt$). Thus 
our result could be understood as a variation formula around $\Sigma$, but where the variation of $\Sigma$ consists of smoothed out measures $\mu_\eps$ instead of neighboring surfaces $\Sigma_t$.

\subsection{Proof of the main result}

\begin{proof}[Proof of Theorem~\ref{thm:laplace}]
  Write $r(x,y) = f(x,y)\mt(x,y)$ and let
  \[
    I(\eps) = \int_X\int_Y \frac{e^{-u(x,y)/\eps}}{(2\pi\eps)^{d/2}}r(x,y)\,dxdy.
  \]
  In order to obtain a Laplace expansion of $I(\eps)$, we proceed for each $x\in X$ to do the Laplace expansion of $\int_Y \frac{e^{-u(x,y)/\eps}}{(2\pi\eps)^{d/2}}r(x,y)\,dy$, using Corollary~\ref{cor:quantitativelaplace}. 
  
  Here comes a small notational problem: $\int_Y \frac{e^{-u(x,y)/\eps}}{(2\pi\eps)^{d/2}}r(x,y)\,dy$ is an integral over $y$, while Corollary~\ref{cor:quantitativelaplace} uses $x$. We prefer to keep the same variable since that also impacts how we write derivatives, with $y$ derivatives using barred indices and $x$ derivatives using unbarred indices. Therefore for the entirety of this proof, we switch the roles of $X$ and $Y$. Assumption~\ref{assumption:XYu} is adjusted as follows: \ref{assumption:XYu:Sigma} $\Sigma$ is the graph of a function $x(y)$ and \ref{assumption:XYu:lowerbound} $u(x',y)\ge \frac\lambda 2 \abs{x'-x(y)}^2$.

  We therefore freeze $y\in Y$ and 
  do the Laplace expansion of $\int_X \frac{e^{-u(x,y)/\eps}}{(2\pi\eps)^{d/2}}r(x,y)\,dx$. The various assumptions put in place in Assumption~\ref{assumption:XYu} directly correspond to the needed Assumption~\ref{ass:ux-localized} to apply Corollary~\ref{cor:quantitativelaplace}.
  In particular note that $x(y)$ is the unique minimizer of $x'\mapsto u(x',y)$ and it corresponds to $\xb$ in Corollary~\ref{cor:quantitativelaplace}.

  Combining the obtained expansions for each $y$ we have 
  \begin{multline*}
    I(\eps) = \int_Y  \Big(\frac{1}{\sqrt{\det[u_{ij}(x(y),y)]}}\Big[r + \eps\Big(\frac 12 u^{ij}\partial_{ij}r-\frac 12 u_{jk\ell}u^{ij}u^{k\ell}\partial_ir \\
+    \frac 18 ru_{ijk}u_{\ell mn}u^{ij}u^{k\ell}u^{mn}+\frac{1}{12} r u_{ijk}u_{\ell mn} u^{i\ell} u^{jm} u^{kn} \\- \frac 18 r u_{ijk\ell}u^{ij}u^{k\ell}\Big)\Big]_{(x(y),y)}+ \eps^2\,\mathcal{R}(\eps,y)\Big)\,dy,
  \end{multline*}
  where $\calR$ a priori depends on $y$ and satisfies the bound 
  \[
    \abs{\mathcal{R}(\eps,y)}\le  C\,\norm{r(\cdot,y)}_{W^{4,\infty}(B(x(y), \delta))} + \norm{r(\cdot,y)}_{L^1(X)}.
  \]  
  $C$ depends on $d$, $\lambda$, $\delta$ and $\norm{D^k_xu(\cdot,y)}_{L^{\infty}(B(x(y), \delta))}$ for $3\le k\le 6$, which we further bound by $\norm{D^ku}_{L^{\infty}(\Sigma_\delta)}$ which does not depend on $y$. We expand the brackets and break down 
  \[
    I(\eps)=:I_0 + \eps I_1 + \eps^2I_2(\eps).
  \] 
  
  We interpret $I_0 + \eps I_1$ as an integral over $\Sigma$ parametrized by $Y$ via $y\mapsto (x(y),y)$. In order to reveal the volume form $\bar m=\sqrt{\det[u_{\ib\jb}]}$ induced by the Riemannian metric in $y$ coordinates, we take determinants in the identity $u_{ij}=c_{i\jb}c_{\ib j} u^{\ib\jb}$ to obtain 
  \[
    \frac{1}{\sqrt{\det[u_{ij}]}} = \frac{\bar m}{\mt}\,\cdot
  \]
  Therefore 
  \begin{multline*}
    I_0 + \eps I_1 = \int_\Sigma   \Big[\frac{r}{\mt} + \eps\frac{1}{\mt}\Big(\frac 12 u^{ij}\partial_{ij}r-\frac 12 u_{jk\ell}u^{ij}u^{k\ell}\partial_ir + \frac 18 ru_{ijk}u_{\ell mn}u^{ij}u^{k\ell}u^{mn}\\
    +\frac{1}{12} r u_{ijk}u_{\ell mn} u^{i\ell} u^{jm} u^{kn} - \frac 18 r u_{ijk\ell}u^{ij}u^{k\ell}\Big)\Big]\,dm.
  \end{multline*}
  Note that we slightly abused notation by writing $\bar m$ for the ``coordinate expression'' $\bar m(y)dy$ and $m$ for the more abstract geometric volume form on $\Sigma$.

  \paragraph{Term $I_0$.}
  Since $r=f\mt$ we have
  \[
    I_0 = \int_\Sigma f\,dm.
  \]

  \paragraph{Term $I_1$.}
  Let us now simplify 
  \begin{multline*}    
    I_1 = \int_\Sigma  \Big(\frac 12 u^{ij}\frac{\partial_{ij}(f\mt)}{\mt} - \frac 12 u_{jk\ell}u^{ij}u^{k\ell}\frac{\partial_i(f\mt)}{\mt} + \\
    \frac 18 fu_{ijk}u_{\ell mn}u^{ij}u^{k\ell}u^{mn}+\frac{1}{12} f u_{ijk}u_{\ell mn} u^{i\ell} u^{jm} u^{kn} \\ -\frac 18 f u_{ijk\ell}u^{ij}u^{k\ell}\Big)\,dm,
  \end{multline*}
  which we write as 
  \[
    I_1 =: \int_\Sigma L\,dm.
  \]
  We want to express $L$ using only geometric objects (metric, covariant derivative, curvature), either intrinsic and extrinsic to $\Sigma$. We break down $L$ into
  \begin{equation*}
    \begin{aligned}
      L_1 &= \frac12   u^{ij} \frac{\partial_{ij}(f\mt)}{\mt},\\
      L_2 &= -\frac12   u_{jk\ell}u^{ij}u^{k\ell}\frac{\partial_i(f\mt)}{\mt},\\
      L_3 &= \frac{1}{8} f u_{ijk}u_{\ell mn}u^{ij}u^{k\ell}u^{mn} , \\
      L_4 &= \frac{1}{12}fu_{ijk}u_{\ell mn} u^{i\ell} u^{jm} u^{kn}, \\
      L_5 &= -\frac{1}{8} f u_{ijk\ell}u^{ij}u^{k\ell},
    \end{aligned}
  \end{equation*}  
  so that $L = L_1 + L_2 + L_3 + L_4 + L_5$. 
  
  We compute each of these five terms using tedious but straightforward computations. Since some of the formulas involved can become lengthy we have also checked them using the symbolic algebra program Cadabra~\cite{peeters2007,cadabra,peeters2007arXiv} which specializes in symbolic tensor computations.\footnote{Code available at \url{https://github.com/flavienleger/geometric-laplace}.}
  
  Using the product rule where necessary, we replace the quantities $\partial_{ij}f$, $\partial_if$, $\partial_{ij}\mt$, $\partial_i\mt$, $u_{ijk}$, $u_{ijk\ell}$ by their expressions in formulas~\eqref{eq:formula-dijf}, \eqref{eq:formula-dif}, \eqref{eq:formula-ddmt}, \eqref{eq:formula-dmt}, \eqref{eq:formula-uijk} and~\eqref{eq:formula-uijkl}. We obtain after simplification the following expressions.
  \begin{multline*}
    L_1 = \frac14 u^{i j}\partial_{i}{G_{j}} 
    - \frac14 u^{i j}\partial_{i}{N_{j}} 
    + \frac12 c^{i \jb}\partial_{i \jb}{f}
    + \frac12 c^{\ib j} u^{k \ell} c_{\ib j k} G_\ell  
    - \frac12 c^{\ib j} u^{k \ell} c_{\ib j k} N_\ell\\
    + f \Big(\frac12 c^{\ib j} u^{k \ell} c_{\ib j k \ell} 
    + \frac12 c^{i \kb} c^{m \nb} u^{j \ell}  c_{i j \kb} c_{\ell m\nb}
    - \frac12 c^{i \nb} c^{\kb m} u^{j \ell} c_{i j \kb} c_{\ell m \nb}\Big).  
  \end{multline*}
  \begin{multline*}
    L_2 = \Big(- \frac14 c^{\ib \ell} u^{j k} c_{\ib j k} 
    - \frac12 c^{\ib j} u^{k \ell} c_{\ib j k} 
    + \frac12 u^{i j} u^{k \ell} h_{i j k}\Big) (G_\ell - N_\ell)\\
    + f \Big(c^{i \kb} u^{j \ell} u^{m n} c_{i j \kb} h_{\ell m n} 
    - c^{i \kb} c^{m \nb} u^{j \ell} c_{i j \kb} c_{\ell m \nb} 
    - \frac12 c^{i \nb} c^{j \kb} u^{\ell m} c_{i j \kb} c_{\ell m \nb} 
    \Big).
  \end{multline*}
  \begin{multline*}
    L_3 = f \Big(\frac12 c^{i \kb} c^{j \nb} u^{\ell m} c_{i j \kb} c_{\ell m \nb}
    + \frac12 c^{j \kb} c^{m \nb}  u^{i\ell} c_{i j \kb} c_{\ell m \nb}
    + \frac18 u^{i j} u^{\ell m} u^{\kb \nb} c_{i j \kb} c_{\ell m \nb} \\
    - c^{j \kb} u^{i \ell} u^{m n} c_{i j \kb} h_{\ell m n}  
    - \frac12 c^{\kb \ell} u^{i j} u^{m n} c_{i j \kb} h_{\ell m n} 
    + \frac12 u^{i j} u^{k \ell} u^{m n} h_{i j k} h_{\ell m n} \Big).
  \end{multline*}
  \begin{multline*}
    L_4 = f \Big(
    \frac12 c^{j \nb} c^{\kb m} u^{i \ell} c_{i j \kb} c_{\ell m \nb} 
    + \frac14 u^{i \ell} u^{j m} u^{\kb \nb} c_{i j \kb} c_{\ell m \nb} 
    - c^{\kb n} u^{i \ell} u^{j m} c_{i j \kb} h_{\ell m n} 
    + \frac13 u^{i \ell} u^{j m} u^{k n} h_{i j k} h_{\ell m n} \Big).
  \end{multline*}
  \begin{multline*}
    L_5 = f \Big(
    \frac14 u^{i j} u^{k \ell} \partial_i h_{j k \ell}
    + \frac14 c^{i \jb} c^{k \lb} c_{i \jb k \lb} 
    - \frac12 c^{\ib j} u^{k \ell} c_{\ib j k \ell}
    + \frac18  u^{i j} u^{\kb \lb} c_{i j \kb \lb}\\
    - \frac14 c^{i \lb} c^{j \mb} c^{\kb n} c_{i j \kb} c_{\lb \mb n} 
    - \frac18 c^{\kb \ell} u^{i j} u^{\mb \nb} c_{i j \kb} c_{\ell \mb \nb} 
    - \frac18 u^{i j} u^{\kb \lb} u^{m n} c_{i j \kb} c_{\lb m n} 
    - \frac14 u^{i \ell} u^{j m} u^{\kb \nb} c_{i j \kb} c_{\ell m \nb}  \\
    + \frac14 c^{\kb \ell} u^{i j} u^{m n} c_{i j \kb} h_{\ell m n} 
    + \frac12 c^{\ib \ell} u^{j m} u^{k n} c_{\ib j k} h_{\ell m n} 
    \Big).
  \end{multline*}

  Summing $L_1$ through $L_5$, many terms cancel out and after simplification we are left with
  \begin{multline*}
    L = \frac14 u^{i j} \partial_{i}{G_{j}}  
    - \frac14 u^{i j} \partial_{i}{N_{j}} 
    + \frac12  c^{i \jb} \partial_{i \jb} f
    - \frac14 c^{\kb \ell} u^{i j} c_{i j \kb} G_\ell
    + \frac12 G_{i} h_{j k l} u^{i j} u^{k l}
    + \frac14 N_{i} c^{i \jb} u^{k \ell} c_{\jb k \ell} \\
    - \frac12 N_{i} h_{j k l} u^{i j} u^{k l}
    + f \left(\frac14 \partial_{i}{h_{j k l}} u^{i j} u^{k l}
    + \frac14 c^{i \jb} c^{k \lb} c_{i \jb k \lb}
    + \frac{1}{8} u^{i k} u^{\jb \lb} c_{i \jb k \lb} 
    - \frac14 c^{i \ib} c^{j \jb} c^{k {\bar k}} c_{i j {\bar k}} c_{\ib \jb k} \right.\\
    - \frac{1}{8}c^{i \ib} c_{i \jb {\bar k}} c_{\ib j k} u^{j k} u^{\jb {\bar k}} 
    - \frac14 c^{i \ib} c_{\ib j k} h_{i l m} u^{j k} u^{l m} 
    - \frac12 c^{i \ib} c_{\ib j k} h_{i l m} u^{j l} u^{k m}+\frac12 h_{i j k} h_{l m n} u^{i j} u^{k l} u^{m n}\\
    \left.+\frac{1}{3}h_{i j k} h_{l m n} u^{i l} u^{j m} u^{k n}\right).
  \end{multline*}

  We now replace the partial derivatives $\partial_iG_j,\partial_iN_j,\partial_ih_{jk\ell}$ with covariant derivatives. Importantly recall our convention outlined in Notation~\ref{notation:normal}: coordinate expressions always represent quantities in the frame $(t_i)$, thus $G=G^it_i$ since $G$ is a tangent vector field to begin with, $N^it_i = KN$ since $N$ is a normal vector field and $h^k_{ij}t_k = Kh(t_i,t_j)$. We adopt the classical index notation where in an expression such as $\nabla_iG_j$, the covariant derivative is applied first and then the resulting oject is evaluated at $(t_i,t_j)$. We therefore have 
  \[
    \nabla_iG_j = \partial_iG_j - \Gamma^k_{ij} G_k.
  \]
  The Christoffel symbols $\Gamma^k_{ij}$ can be expressed in terms of $c$ and $h$ by~\eqref{eq:Gamma-Gammat-h} which leads to
  \[
    \partial_iG_j = \nabla_iG_j + c^{k\mb} c_{ij\mb} G_k + u^{k\ell} h_{ij\ell} G_k.
  \]
  A similar formula holds for $N$ and for $h$ it takes the form 
  \begin{multline*}
    \partial_{\ell}{h_{i j k}} = \nabla_{\ell} h_{i j k} + c^{\ib m} c_{i \ell \ib} h_{j k m} 
+ c^{\ib m} c_{j \ell \ib} h_{i k m} + c^{\ib m} c_{k \ell \ib} h_{i j m}\\-h_{i j m} h_{k \ell n} u^{m n}
-h_{i k m} h_{j \ell n} u^{m n}-h_{i \ell m} h_{j k n} u^{m n}.
  \end{multline*}
  We also recognize within $L$ the expression of the curvature tensor $\Rt$~\eqref{EqCurvatureTensorRelations}. We obtain after simplification 
  \begin{multline*}
    L = \frac14 \nabla_i G_j u^{i j} - \frac14 \nabla_i N_j u^{i j}+\frac12 c^{i \jb} \partial_{i \jb}{f} +\frac14 G_{i} h_{j k l} u^{i j} u^{k l} - \frac14 N_{i} h_{j k l} u^{i j} u^{k l} - \frac12 c^{i \jb} c^{k \lb} {\tilde R}_{i \jb k \lb} f \\- \frac14 u^{i k} u^{\jb \lb} {\tilde R}_{i \jb k \lb} f+\frac14 \nabla_{i}{h_{j k \ell}} f u^{i j} u^{k l}+\frac14 f h_{i j k} h_{\ell m n} u^{i j} u^{k \ell} u^{m n} - \frac{1}{6}f h_{i j k} h_{\ell m n} u^{i \ell} u^{j m} u^{k n}.
  \end{multline*}
  Here we can recognize several geometric quantities. First recall that the divergence of a tensor is the trace of its covariant derivative. So for a vector field $V=V^it_i$ we have 
  \[
    \div V = \nabla_i V^i = \nabla_i(V_j u^{ij}) = (\nabla_iV_j)u^{ij}.
  \]
  Note that since the metric is compatible with the connection, traces can be taken inside or outside the covariant derivative and there is no ambiguity in writing $\div V = \nabla_iV_ju^{ij}$. 

  In the expression of $L$ we recognize $\nabla_i G_j u^{i j}=\div G$. For $N$ it is the same but recall that $N^it_i = KN$, thus $\nabla_i N_j u^{i j} = \div(KN)$. Finally $\nabla_ih_{jk\ell} u^{ij} u^{k\ell} =\nabla_i(h_{jk\ell} u^{k\ell})u^{ij}  = \nabla_iH_j u^{ij}=\div(KH)$.
  
  We also recognize the $X\times Y$ Laplacian  $c^{i\jb}\partial_{i\jb}f = -\frac14 \Deltat f$ (see Lemma~\ref{lemma:Laplacians}), and the scalar curvature $\hR = 8 c^{\jb l}c^{i\kb} \hR_{i\jb\kb l}$. We obtain 
  \begin{multline*}
    L = \frac14 \div(G) - \frac14  \div(KN) -\frac18 \Deltat f + \frac14 \bracket{G, KH}  - \frac14  \bracket{KN,KH} +\frac{1}{16}f\Rt \\ - \frac14 u^{ik} u^{\jb\lb} \Rt_{i\jb k\lb} f +\frac14 \div(KH) f + \frac14 \bracket{KH,KH} f - \frac16 \bracket{Kh,Kh}.
  \end{multline*}
  We combine $\frac14 \bracket{G, KH} + \frac14 \div(KH) f = \frac14 \div(f KH)$, simplify some $K$'s, and obtain
  \begin{multline*}
    L = \frac14 \div(G) - \frac14  \div(KN) -\frac18 \Deltat f +  \frac14 \div(f KH)  + \frac14  \bracket{N,H} +\frac{1}{16}f\Rt \\ - \frac14 u^{ik} u^{\jb\lb} \Rt_{i\jb k\lb} f - \frac14 \bracket{H,H} f + \frac16 \bracket{h,h}.
  \end{multline*}
  Finally for the quantity $u^{ik} u^{\jb\lb} \Rt_{i\jb k\lb}$ we use the Gauss equation (Lemma~\ref{lemma:Gauss-equation}). We obtain $L$ written purely in terms of geometric quantities,
  \begin{multline*}
    L = - \frac18 \Deltat f +\frac14 \nablat_H f + f \Big( - \frac{1}{8}R + \frac{3}{32}{\Rt} -\frac{1}{8}\bracket{H,H} + \frac{1}{24}\bracket{h,h}\Big) \\+ \frac14 \div(\nabla f - KN + fKH).
  \end{multline*}
  Integrating over $\Sigma$, the divergence gives us boundary terms and we obtain 
  \begin{multline*}
    I_1 = \int_\Sigma \Big[- \frac18 \Deltat f +\frac14 \nablat_H f + f \Big( - \frac{1}{8}R + \frac{3}{32}{\Rt} -\frac{1}{8}\bracket{H,H} + \frac{1}{24}\bracket{h,h}\Big)\Big]dm \\
    + \int_{\partial\Sigma}  \frac14 \bracket{\nabla f - KN + fKH, \nu}d\sigma.
  \end{multline*}
  Here $\nu$ is the outer normal and $\sigma$ is the volume form induced by $g$ on $\partial\Sigma$.

  \paragraph{Term $I_2$.}
  We have 
  \begin{equation*}
    \abs{I_2(\eps)} \le \int_Y \abs{\calR(\eps,y)}\,dy \le C\int_Y\norm{r(\cdot,y)}_{W^{4,\infty}(B(x(y), \delta))} + \norm{r(\cdot,y)}_{L^1(X)},
  \end{equation*}
  which we write as $\abs{I_2(\eps)} \le C (\norm{r}_{L^1_yW^{4,\infty}_x(\Sigma_\delta)} + \norm{r}_{L^1(X\times Y)})$.
\end{proof}


\section{Laplace method with quantitative remainder}
\label{SecQuantitativeLaplace}

\subsection{Statement of the result}

Recall that the classical Laplace method consists in studying the behavior as $\eps\to 0^+$ of the integral
\[
  \int_\Rd e^{-u(x)/\eps} \,r(x)dx\,.
\]
In this section we prove a Laplace formula with explicit zeroth- and first-order terms and a quantitative remainder.

First let us introduce some notations. When $f$ is a function defined over $\Rd$, the ``norm of its $k$th derivative'' is defined as
\begin{equation*}
  \abs{D^kf} \coloneqq \sum_\alpha \abs{\partial_{1}^{\alpha_1}\partial_{2}^{\alpha_2}\dots \partial_{d}^{\alpha_d}f},
\end{equation*}
where the sum runs over all multi-indices $\alpha$ such that $\alpha_1+\alpha_2+\dots+\alpha_d=k$. We also denote 
\begin{equation} \label{eq:def-norm-derivatives-upto}
  \abs{D^{\le k}f} \coloneqq \sum_{j=0}^k\abs{D^jf}.
\end{equation}
For a fixed $x\in \Rd$, the Taylor remainder of $f$ about $x$ is defined as the function
\begin{equation} \label{eq:def-taylor-remainder}
  R_nf(z)=f(x+z)-\sum_{k=0}^{n-1}\frac{1}{k!} \partial_{i_1\dots i_k}f(x)\,z^{i_1}\dots z^{i_k}\,,
\end{equation}
for $n\ge 1$. Here $z^i$ denotes the $i$th component of $z$, the indices $i_j$ run from $1$ to $d$ and following the Einstein summation convention the sum over $i_1,\dots,i_k$ is not explicitly written. 

We set $G(z)=(2\pi)^{-d/2} e^{-\frac{1}{2}\abs{z}^2}$ and define
\[
  G_\tau(z) = \tau^{-d/2}G(z/\sqrt{\tau}),
\]
whenever $\tau>0$.
We also introduce the convolution kernel
\[
  K(z) = \frac{e^{-\abs{z}^2/4}}{\abs{z}^{d-1}}\,,
\]
and set for any $\tau>0$,
\begin{equation} \label{eq:def-Keps:1}
  K_\tau(z)=\tau^{-d/2}K(z/\sqrt{\tau}).
\end{equation}
Note that $K\in L^1(\Rd)$ and that $\norm{K_\tau}_{L^1} = \norm{K}_{L^1}$.

We are now ready to state our quantitative version of the Laplace expansion at first order, although the strategy of the proof easily supports the extension to higher orders. 
We initially consider the following assumptions on $u$ (but see Assumption~\ref{ass:ux-localized} and Corollary~\ref{cor:quantitativelaplace} below):\\

\begin{assumption}\label{ass:ux} 
  \leavevmode
  \begin{enumerate}[(i)]
    \item \label{ass:ux:regularity}$u\in C^6(\Rd)$ and $\abs{D^ku}\in L^\infty(\Rd)$ for $3\le k\le 6$. 
    \item \label{ass:ux:bound-below} There exist $\lambda>0$ and a point $\xb\in\Rd$ such that $u(\xb)=0$ and 
    \[
      u(x)\ge \frac\lambda 2\abs{x-\xb}^2\,.
    \]
  \end{enumerate}
  
\end{assumption}

Note that~\ref{ass:ux:bound-below} implies that $u(x)\ge 0$ and that $u$ attains its minimum value $0$ at a unique point, $\xb$.

\begin{theorem} \label{thm:quantitativelaplace} 
  Let $u$ be a function satisfying Assumption~\ref{ass:ux} and let $r\in C^4(\Rd)$. Then there exists $C>0$ such that for all $\eps>0$,   
  \begin{multline*}
    \int_{\Rd} \frac{e^{-u(x)/\eps}}{(2\pi\eps)^{d/2}} \,r(x)dx = \\\frac{1}{\sqrt{\det[u_{ij}]}}\bigg[r + \eps\Big(\frac 12 u^{ij}\partial_{ij}r-\frac 12 u_{jk\ell}u^{ij}u^{k\ell}\partial_ir +
      \frac 18 r\,u_{ijk}u_{\ell mn}u^{ij}u^{k\ell}u^{mn}\\+\frac{1}{12} r\, u_{ijk}u_{\ell mn} u^{i\ell} u^{jm} u^{kn} - \frac 18 r\, u_{ijk\ell}u^{ij}u^{k\ell}\Big)\bigg]+ \eps^2\,\mathcal{R}(\eps)\,,
  \end{multline*}
  where the right-hand side is evaluated at $x=\xb$, and where
  \[
    \abs{\mathcal{R}(\eps)}\le  C\,\Big[\abs{D^{\le 2}r}(\xb) + (K_{\eps/\lambda} \!*\! \abs{D^{\le 4}r})(\xb)\Big]\,.
  \]
  The constant $C$ only depends on $d$, $\lambda$ and $\norm{D^ku}_{L^{\infty}(\Rd)}$ for $3\le k\le 6$. The quantity $\abs{D^{\le 4}r}$ is defined by~\eqref{eq:def-norm-derivatives-upto} and $K_{\eps/\lambda}$ is defined by~\eqref{eq:def-Keps:1}.

\end{theorem}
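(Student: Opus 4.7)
The strategy is to decompose $u(x)=Q(x-\xb)+v(x-\xb)$ where $Q(z)=\tfrac12 u_{ij}(\xb)z^iz^j$ is the quadratic part at $\xb$ and $v$ is the Taylor remainder of order three (so $v(z)=O(|z|^3)$), and then to factor $e^{-u/\eps}=e^{-Q/\eps}\cdot e^{-v/\eps}$. The first factor is a pure Gaussian concentrated on the scale $\sqrt\eps$ around $\xb$. Crucially, the global quadratic lower bound in Assumption~\ref{ass:ux}\ref{ass:ux:bound-below} provides the Gaussian majorant $e^{-u/\eps}\le e^{-\lambda|x-\xb|^2/(2\eps)}$ valid on all of $\Rd$: this will control the contribution far from $\xb$ without needing any explicit cutoff.

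After the rescaling $x=\xb+\sqrt\eps\,w$ the integral becomes
\[
  \int_{\Rd} \frac{e^{-Q(w)}}{(2\pi)^{d/2}}\, e^{-v(\sqrt\eps w)/\eps}\, r(\xb+\sqrt\eps w)\,dw,
\]
whose leading Gaussian integrates to $1/\sqrt{\det u_{ij}}$. Since $v$ starts at third order, one has $v(\sqrt\eps w)/\eps=\sqrt\eps\,P_3(w)+\eps\,P_4(w)+\eps^{3/2}\tilde R(\eps,w)$, where $P_3(w)=\tfrac16 u_{ijk}(\xb)w^iw^jw^k$, $P_4(w)=\tfrac1{24}u_{ijk\ell}(\xb)w^iw^jw^kw^\ell$, and $\tilde R$ is a higher Taylor remainder of $u$. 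Taylor expanding both $e^{-v/\eps}=1-\sqrt\eps\,P_3+\eps(\tfrac12 P_3^2-P_4)+O(\eps^{3/2})$ and $r(\xb+\sqrt\eps w)=r+\sqrt\eps\, w^i\partial_i r+\tfrac\eps2 w^iw^j\partial_{ij}r+O(\eps^{3/2})$ (derivatives at $\xb$), multiplying and grouping by powers of $\sqrt\eps$, the odd-power terms vanish under the symmetric Gaussian, while the remaining even moments of $w$ up to order six are computed in closed form by Wick's formula with covariance $\int w^iw^j\,dG=u^{ij}$. Collecting indices, the various contributions assemble exactly into the explicit first-order formula stated in the theorem.

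The main technical obstacle is converting this formal expansion into a quantitative bound on $\calR(\eps)$. Three sources of error must be handled: (i) the Taylor remainder $\tilde R$ of $u$ inside $e^{-v/\eps}$; (ii) the higher-order Taylor terms of $e^{-v/\eps}$ beyond order $\eps$; and (iii) the Taylor remainder of $r$. For (i)-(ii) one observes that in the regime where $v/\eps$ stays small, each additional term of the expansion of $e^{-v/\eps}$ contributes an extra power of $\sqrt\eps$ times a polynomial in $w$ integrated against $e^{-Q}$, which is controlled by $\|D^k u\|_{L^\infty}$ for $k\le 6$ together with bounded Gaussian moments; in the complementary region where $v$ is not small, the Gaussian majorant $e^{-u/\eps}\le e^{-\lambda|x-\xb|^2/(2\eps)}$ replaces $e^{-Q/\eps}e^{-v/\eps}$ directly and prevents any loss from the possible blow-up of $e^{-v/\eps}$. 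This is precisely where the global quadratic lower bound plays its essential role. The explicit quantities $r(\xb),\partial_ir(\xb),\partial_{ij}r(\xb)$ appearing in the first-order formula are bounded trivially by $|D^{\le 2}r|(\xb)$.

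For the Taylor remainder of $r$ (source (iii))—the delicate point where the convolution kernel $K_{\eps/\lambda}$ emerges—I would use the integral form $R_kr(z)=\frac{1}{(k-1)!}\int_0^1(1-t)^{k-1}D^kr(\xb+tz)(z^{\otimes k})\,dt$ for $k\in\{3,4\}$, so that $|R_kr(z)|\le C|z|^k\int_0^1|D^kr|(\xb+tz)\,dt$. Integrating against the Gaussian majorant $(2\pi\eps)^{-d/2}e^{-\lambda|z|^2/(2\eps)}$, applying Fubini in $(t,z)$ and making the change of variable $y=tz$ with $t$ still free, the expression reduces to $\int|D^kr|(\xb+y)\,\kappa_\eps(y)\,dy$ where
\[
  \kappa_\eps(y)=\int_0^1 t^{-k-d}\,(2\pi\eps)^{-d/2}\,|y|^k\, e^{-\lambda|y|^2/(2\eps t^2)}\,dt.
\]
Evaluating the $t$-integral in closed form produces a singularity of order $|y|^{-(d-1)}$ at $0$ together with a Gaussian tail, which is majorized pointwise (up to a multiplicative constant depending only on $d,\lambda$) by $K_{\eps/\lambda}(y)$. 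This yields the contribution $(K_{\eps/\lambda}*|D^{\le 4}r|)(\xb)$ in the remainder bound and closes the proof.
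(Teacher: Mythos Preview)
Your proposal is correct and follows essentially the same route as the paper: split $u=u_0+(u-u_0)$ with $u_0$ the quadratic part, expand the exponential of the remainder, and compute the surviving Gaussian moments via Isserlis/Wick; your derivation of the kernel $K_{\eps/\lambda}$ from the integral Taylor remainder of $r$ is exactly the paper's Lemma~\ref{lemma:taylor-remainder-convolution}.

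The one organizational difference worth knowing is how the paper handles your sources (i)--(ii). Rather than splitting into a region where $v/\eps$ is small and a complementary region, the paper packages the expansion as a fourth-order Taylor expansion of the \emph{functional} $F_\eps(w)=\sqrt{\det[u_{ij}(\xb)]}\int e^{-w/\eps}r$ along the segment from $u_0$ to $u$. The remainder is then $\sup_{w}\lvert\delta^4 F_\eps(w)(u-u_0)^{\otimes 4}\rvert$ with the supremum over the convex set $\calU$ of functions satisfying Assumption~\ref{ass:ux}; since every such $w$ obeys the quadratic lower bound, the Gaussian majorant $e^{-w/\eps}\le e^{-\lambda\lvert z\rvert^2/(2\eps)}$ applies directly and no region splitting is needed. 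This is equivalent to using the integral form of the Taylor remainder of $s\mapsto e^{-s}$ and observing that $e^{-u_0/\eps}e^{-t(u-u_0)/\eps}=e^{-((1-t)u_0+tu)/\eps}$ with $(1-t)u_0+tu\in\calU$. Your region-splitting argument also works (the ``bad'' region starts around $\lvert z\rvert\sim\eps^{1/3}\gg\sqrt\eps$, where the Gaussian tail is super-exponentially small), but the convexity trick is cleaner and avoids having to track separately the polynomial terms of the truncated expansion on the bad region.
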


\begin{remark}
  In particular if $r\in W^{4,\infty}(\Rd)$, where $W^{4,\infty}(\Rd)$ stands for the usual Sobolev space of functions with four derivatives in $L^\infty$, then the remainder can be taken independent of $\eps$,
  \begin{equation} \label{eq:remainer-infty}
    \abs{\calR(\eps)} \le C\norm{r}_{W^{4,\infty}(\Rd)}\,.
  \end{equation}
\end{remark}

We now proceed to localize Theorem~\ref{thm:quantitativelaplace}. Indeed when doing a Laplace expansion, we expect that the regularity and the behavior of the various quantities at play should mostly matter on a neighborhood of the minimizer $\xb$. We therefore fix an open subset $X\subset\Rd$ and consider functions $u$ and $r$ defined over $X$. In the following assumptions, $B(\xb, \delta)$ denotes the ball of radius $\delta$ and center $\xb$ and $\abs{\cdot}$ denotes the Euclidean norm of $\Rd$.

\begin{assumption}[Assumptions on $X$ and $u$]\label{ass:ux-localized}
  \leavevmode
  \begin{enumerate}[(i)]
    \item\label{ass:ux-localized:gen} $u$ is a measurable function over $X$, $u\ge 0$ and there exists $\xb\in X$ such that $u(\xb)=0$. 
    \item\label{ass:ux-localized:ball} There exists $\delta>0$ such that $B \coloneqq B(\xb, \delta) \subset X$.
    \item\label{ass:ux-localized:regularity} $u\in C^6(B)$.
    \item\label{ass:ux-localized:bound-below}

    There exists $\lambda>0$ such that
    \begin{align*}
      u(x) &\ge \frac\lambda 2\abs{x-\xb}^2 \quad\text{for all $x\in B$}, \\    
      u(x) &\ge \frac\lambda 2\delta^2 \quad\text{for all $x\in X \setminus B.$}
    \end{align*}
  \end{enumerate}
\end{assumption}

Assumption~\ref{ass:ux-localized}\ref{ass:ux-localized:bound-below} can be replaced by demanding that $D^2u(\xb)\ge \lambda I_d$ and that $u$ be bounded below by a strictly positive constant outside of $B$ (this constant is taken here as $\frac\lambda 2\delta^2$ but its precise value matters little).

We now state a local version of Theorem~\ref{thm:quantitativelaplace} taking $r\in W^{4,\infty}(X)\cap L^1(X)$ for simplicity.

\begin{corollary} \label{cor:quantitativelaplace}
  Let $u$ be a function satisfying Assumption~\ref{ass:ux-localized} and let $r\in W^{4,\infty}(B)\cap L^1(X)$. Then there exists $C>0$ such that for all $\eps>0$,
  \begin{multline*}
    \int_X \frac{e^{-u(x)/\eps}}{(2\pi\eps)^{d/2}} \,r(x)dx = \\\frac{1}{\sqrt{\det[u_{ij}]}}\bigg[r + \eps\Big(\frac 12 u^{ij}\partial_{ij}r-\frac 12 u_{jk\ell}u^{ij}u^{k\ell}\partial_ir +
      \frac 18 r\,u_{ijk}u_{\ell mn}u^{ij}u^{k\ell}u^{mn}\\+\frac{1}{12} r\, u_{ijk}u_{\ell mn} u^{i\ell} u^{jm} u^{kn} - \frac 18 r\, u_{ijk\ell}u^{ij}u^{k\ell}\Big)\bigg]+ \eps^2\,\mathcal{R}(\eps)\,,
  \end{multline*}
  where the right-hand side is evaluated at $x=\xb$, and where
  \[
    \abs{\mathcal{R}(\eps)}\le  C\,\Big[\norm{r}_{W^{4,\infty}(B)} + \norm{r}_{L^1(X)}]\,.
  \]
  The constant $C$ depends on $d$, $\lambda$, $\delta$ and $\norm{D^ku}_{L^{\infty}(B)}$ for $3\le k\le 6$.
\end{corollary}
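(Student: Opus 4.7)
The plan is to reduce Corollary~\ref{cor:quantitativelaplace} to the global statement Theorem~\ref{thm:quantitativelaplace} by extending $u$ and $r$ from their local definitions to all of $\Rd$. The leading term and the $\eps$-correction in the expansion only depend on $u$, $r$ and their derivatives at $\xb$, so any extension agreeing with the original data on a neighborhood of $\xb$ produces the same explicit coefficients. To extend $u$, fix a smooth cutoff $\chi \in C^\infty_c(B)$ with $\chi \equiv 1$ on $B(\xb,\delta/2)$ and set
\[
  \tilde u(x) = \chi(x)\bigl[u(x) - \tfrac{\lambda}{2}\abs{x-\xb}^2\bigr] + \tfrac{\lambda}{2}\abs{x-\xb}^2.
\]
Assumption~\ref{ass:ux-localized}\ref{ass:ux-localized:bound-below} makes the bracket nonnegative on $B$, so $\tilde u \ge \tfrac{\lambda}{2}\abs{x-\xb}^2$ on all of $\Rd$. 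The quadratic skeleton has vanishing derivatives of order $\ge 3$, and the compactly supported correction is $C^6$; together this bounds $\norm{D^k \tilde u}_{L^\infty(\Rd)}$ for $3 \le k \le 6$ in terms of $d$, $\delta$ and $\norm{D^k u}_{L^\infty(B)}$. Since $\tilde u = u$ on $B(\xb,\delta/2)$, Assumption~\ref{ass:ux} is satisfied and all derivatives of $\tilde u$ at $\xb$ match those of $u$.

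For $r$, take a second cutoff $\chi_1 \in C^\infty_c(B)$ with $\chi_1 \equiv 1$ on $B(\xb,\delta/2)$, and let $\tilde r = \chi_1 r$ extended by zero to $\Rd$. Then $\tilde r \in W^{4,\infty}(\Rd)$ with $\norm{\tilde r}_{W^{4,\infty}(\Rd)} \le C\norm{r}_{W^{4,\infty}(B)}$, and all derivatives of $\tilde r$ at $\xb$ coincide with those of $r$. Applying Theorem~\ref{thm:quantitativelaplace} in the $W^{4,\infty}$ form (cf.~\eqref{eq:remainer-infty}) to $\tilde u$ and $\tilde r$ yields
\[
  \tilde I \coloneqq \int_{\Rd} \frac{e^{-\tilde u/\eps}}{(2\pi\eps)^{d/2}}\tilde r\,dx = T_0 + \eps T_1 + \eps^2\tilde{\calR}(\eps),
\]
where $T_0, T_1$ are exactly the two explicit terms written in the corollary's statement, and $\abs{\tilde{\calR}(\eps)} \le C\norm{r}_{W^{4,\infty}(B)}$ with $C$ depending on $d$, $\lambda$, $\delta$ and $\norm{D^k u}_{L^\infty(B)}$ for $3 \le k \le 6$.

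It remains to compare $\tilde I$ with $I \coloneqq \int_X (2\pi\eps)^{-d/2} e^{-u/\eps}r\,dx$. Since $u = \tilde u$ and $r = \tilde r$ on $B(\xb,\delta/2)$, the difference reduces to integrals over $X \setminus B(\xb,\delta/2)$ and $\Rd \setminus B(\xb,\delta/2)$. On both regions the exponent is bounded below by $\lambda\delta^2/8$; splitting one factor as $e^{-u/(2\eps)}$ to extract a uniform factor $e^{-\lambda\delta^2/(16\eps)}$ and using the Gaussian bound $\tilde u \ge \tfrac{\lambda}{2}\abs{x-\xb}^2$ together with $\tilde r \in L^\infty$ on the extended side yields
\[
  \abs{I - \tilde I} \le C e^{-\lambda\delta^2/(16\eps)}\bigl(\norm{r}_{L^1(X)} + \norm{r}_{W^{4,\infty}(B)}\bigr),
\]
which is $o(\eps^N)$ for every $N$ and is therefore absorbed into the $\eps^2$ remainder. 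The only mild subtlety is that $\tilde u$ must simultaneously preserve the quadratic lower bound globally and have uniformly bounded derivatives of orders $3$ through $6$; the quadratic skeleton $\tfrac{\lambda}{2}\abs{x-\xb}^2$ is chosen precisely because its higher derivatives vanish, leaving only the compactly supported correction to control. Everything else is standard bookkeeping of constants.
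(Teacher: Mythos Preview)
Your proposal is correct and follows essentially the same approach as the paper: the same cutoff-based extension $\tilde u = \chi(u-\tfrac{\lambda}{2}\abs{x-\xb}^2)+\tfrac{\lambda}{2}\abs{x-\xb}^2$ and $\tilde r=\chi_1 r$, the same appeal to Theorem~\ref{thm:quantitativelaplace} in its $W^{4,\infty}$ form~\eqref{eq:remainer-infty}, and the same argument that the discrepancy $\abs{I-\tilde I}$ is exponentially small using the lower bounds on $u$ and $\tilde u$ outside $B(\xb,\delta/2)$. The only cosmetic difference is that the paper extracts the factor $e^{-\lambda\delta^2/(2\eps)}$ directly from the full exponent rather than splitting it, but this leads to the same conclusion.
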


\subsection{Proof of Theorem~\ref{thm:quantitativelaplace}}

We start with a lemma providing upper bounds on certain types of Gaussian integrals of $R_nf$ in terms of the norm of the $n$th derivative of $f$. 

\begin{lemma}\label{lemma:taylor-remainder-convolution}
  There exists a constant $C=C(d,n,k)$ such that     
    \[
        \int_{\Rd}\abs{z}^k \abs{R_nf(z)} \,G_\tau(z)dz \le C \,\tau^{\frac{k+n}{2}} \, (K_\tau * \abs{D^nf})(x)\,,
    \]
    for any $\tau>0$ and any integers $k\ge 0$ and $n\ge 1$. 
\end{lemma}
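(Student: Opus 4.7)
The plan is to reduce the lemma to a pointwise kernel estimate. Starting from the integral form of the Taylor remainder,
\[
R_n f(z) = \frac{1}{(n-1)!}\int_0^1 (1-t)^{n-1}\,\partial_{i_1\dots i_n}f(x+tz)\,z^{i_1}\cdots z^{i_n}\,dt,
\]
I obtain the pointwise bound $\abs{R_nf(z)}\le C_n \abs{z}^n \int_0^1 (1-t)^{n-1}\abs{D^nf}(x+tz)\,dt$. Inserting this into the left-hand side of the lemma, applying Fubini, and then changing variables $w = tz$ in the inner integral (using the scaling identity $t^{-d}G_\tau(w/t) = G_{t^2\tau}(w)$), a further swap of the order of integration gives
\[
\int_{\Rd}\abs{z}^k \abs{R_nf(z)}\, G_\tau(z)\,dz \le C_n \int_{\Rd} \abs{D^nf}(x+w)\,\Phi(w,\tau)\,dw,
\]
where
\[
\Phi(w,\tau) \coloneqq \abs{w}^{k+n}\int_0^1 (1-t)^{n-1} t^{-(k+n)} G_{t^2\tau}(w)\,dt.
\]
Since $K_\tau$ is radial, $(K_\tau * \abs{D^nf})(x) = \int K_\tau(w)\, \abs{D^nf}(x+w)\,dw$, so the entire lemma reduces to establishing the pointwise kernel estimate
\[
\Phi(w,\tau)\le C\,\tau^{(k+n)/2} K_\tau(w)
\]
for a constant $C=C(d,n,k)$.

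The main obstacle lies in this kernel estimate, since the integrand of $\Phi$ contains a singular factor $t^{-(k+n+d)}$ as $t\to 0^+$. The correct substitution, motivated by the fact that the $t$-integrand is peaked at $t_\star \sim \abs{w}/\sqrt{\tau}$, is $u = \abs{w}^2/(2t^2\tau)$. Bounding $(1-t)^{n-1}\le 1$ and performing this substitution transforms the $t$-integral into an incomplete gamma integral:
\[
\int_0^1 (1-t)^{n-1} t^{-(k+n+d)} e^{-\abs{w}^2/(2t^2\tau)}\,dt
\le \frac{C\,\tau^{(k+n+d-1)/2}}{\abs{w}^{k+n+d-1}}\int_{\abs{w}^2/(2\tau)}^\infty u^{(k+n+d-3)/2} e^{-u}\,du.
\]
The incomplete gamma integral on the right is controlled by $C\,e^{-\abs{w}^2/(4\tau)}$ via the elementary inequality $u^m e^{-u}\le C_m e^{-u/2}$ (valid on $[1,\infty)$ and patched near $0$ using the finite value of $\Gamma(m+1)$, as $m=(k+n+d-3)/2 > -1$ since $n\ge 1$ and $d\ge 1$). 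Multiplying by the outer prefactor $\abs{w}^{k+n}/(2\pi\tau)^{d/2}$ from $\Phi$ and simplifying the powers of $\abs{w}$ and $\tau$,
\[
\Phi(w,\tau)\le \frac{C\,\tau^{(k+n-1)/2}}{\abs{w}^{d-1}}\, e^{-\abs{w}^2/(4\tau)} = C\,\tau^{(k+n)/2} K_\tau(w),
\]
which is the required bound. Plugging back into the reduction and recognizing the convolution closes the argument.
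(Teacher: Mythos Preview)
Your proof is correct and follows essentially the same route as the paper's. Both start from the integral Taylor remainder, change variables $w=tz$ (the paper's $z\leftarrow sz$), and then substitute in the remaining $t$-integral; your $u=\abs{w}^2/(2t^2\tau)$ is just the square of the paper's variable $s=\abs{z}/(t\sqrt{\tau})$, and your incomplete-gamma tail bound plays the same role as the paper's splitting $e^{-s^2/2}\le e^{-\abs{z}^2/(4\tau)}e^{-s^2/4}$. The only cosmetic difference is that you package the argument as a pointwise kernel inequality $\Phi(w,\tau)\le C\tau^{(k+n)/2}K_\tau(w)$, whereas the paper carries the $\abs{D^nf}$ factor through; this is a nice way to isolate the analytic content but not a genuinely different approach.
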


\begin{proof}
  By the Taylor remainder theorem applied to $s\mapsto f(x+sz)$ we have 
  \[
      R_nf(z)=\int_0^1\frac{(1-s)^{n-1}}{(n-1)!}\partial_{i_1\dots i_n}f(x+sz)z^{i_1}\dots z^{i_n}\,ds\,.
  \]
  By Jensen's inequality, 
  \[
      \abs{R_nf}(z) \le \int_0^1\frac{1}{(n-1)!}(1-s)^{n-1}\abs{D^nf}(x+sz)\abs{z}^{n}\,ds\,,
  \]
  and we further bound $(1-s)^{n-1}$ by $1$. 
  Therefore 
  \[
      \int_{\Rd}\abs{z}^k\abs{R_nf(z)} \, G_\tau(z)dz \le \int_{\Rd}\int_{0}^1 \frac{1}{(n-1)!}\abs{D^nf}(x+sz)\abs{z}^{k+n} G_\tau(z)\,dsdz\,.
  \]
  Doing sequentially the following changes of variables $z\leftarrow  sz$ and $s\leftarrow \frac{\abs{z}}{s\sqrt{\tau}} $, the right-hand side becomes
  \[
    \int_\Rd\frac{1}{(n-1)!}\abs{D^nf}(x+z) \frac{\tau^{\frac{k+n-1}{2}} }{\abs{z}^{d-1}} \int_{\abs{z}/\sqrt{\tau}}^\infty s^{k+n+d-2} \frac{e^{-s^2/2}}{(2\pi)^{d/2}}\,ds\,dz\,.
  \]
  Write now $e^{-s^2/2} = e^{-s^2/4}e^{-s^2/4} \le e^{-\abs{z}^2/4\tau} e^{-s^2/4}$ when $s\ge \abs{z}/\sqrt{\tau}$, and bound the $s$ integral by the integral over $(0, \infty)$. This results in 
  \begin{multline*}
    \int_{\Rd}\abs{z}^k\abs{R_nf(z)} \, G_\tau(z)dz \le \frac{1}{(n-1)!}\int_0^\infty s^{k+n+d-2} \frac{e^{-s^2/4}}{(2\pi)^{d/2}}\,ds \\ \int_\Rd \abs{D^nf}(x+z) \tau^\frac{k+n}{2} K_\tau(z)\,dz.
  \end{multline*}
 We obtain the desired result with the constant $C(d,n,k) = \int_0^\infty s^{k+n+d-2} \frac{e^{-s^2/4}}{(2\pi)^{d/2}}\,ds$, finite since $k+n+d-2\ge 0$. 
\end{proof}

We will also compute explicitly certain Gaussian moments. We therefore recall Isserlis' formula (see~\cite{GVK025155202} which also contains an application to a formal Laplace expansion).

\begin{lemma}[Isserlis' formula] \label{lemma:isserlis}
  Let 
  \[    
    p_\eps(z) = \sqrt{\smash[b]{\det[u_{ij}(\xb)]}} \frac{e^{-\frac{1}{2\eps} u_{ij}(\xb)z^iz^j}}{(2\pi\eps)^{d/2}}
  \]
  be the Gaussian density with zero mean and covariance matrix $[\eps u^{ij}(\xb)]$, and fix indices $1\le i_k\le d$ for $k=1,\dots,2n$.
Then
    \[
        \int_{\Rd}z^{i_1}\dots z^{i_{2n}} \,p_\eps(z)dz=\eps^{n}\sum_P \prod_{\{a,b\}\in P}u^{i_ai_b}(\xb)\,,
    \]
    where the sum runs over all the partitions $P$ of $\{1,\dots,2n\}$ into pairs $\{a,b\}$.
\end{lemma}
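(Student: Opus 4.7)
The plan is to establish this classical Wick/Isserlis identity by Gaussian integration by parts, which reduces the $2n$-th moment to lower-order moments via a recursion that exactly corresponds to ``pulling off'' one factor and pairing it with each of the others.

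First I would verify a Stein-type identity tailored to our Gaussian. Since $p_\eps(z) \propto \exp\!\big(\!-\!\tfrac{1}{2\eps} u_{ij}(\xb) z^i z^j\big)$, direct differentiation gives $\partial_j p_\eps = -\tfrac{1}{\eps} u_{jk}(\xb) z^k\, p_\eps$. Contracting with $\eps u^{ij}(\xb)$ and using $u^{ij}u_{jk} = \delta^i_k$ yields $z^i\, p_\eps = -\eps\, u^{ij}(\xb)\, \partial_j p_\eps$. Integrating against a polynomially growing differentiable function $f$ and applying integration by parts (the boundary terms at infinity vanish thanks to the Gaussian decay) produces
\[
  \int_{\Rd} z^i f(z)\, p_\eps(z)\,dz = \eps\, u^{ij}(\xb) \int_{\Rd} \partial_j f(z)\, p_\eps(z)\, dz\,.
\]

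Next I would take $f(z) = z^{i_2} \cdots z^{i_{2n}}$. By the Leibniz rule, $\partial_j f(z) = \sum_{k=2}^{2n} \delta^{i_k}_j\, z^{i_2} \cdots \widehat{z^{i_k}} \cdots z^{i_{2n}}$, where the hat denotes omission. Substituting and contracting the Kronecker deltas against $\eps u^{i_1 j}(\xb)$ gives the recursion
\[
  \int_{\Rd} z^{i_1} \cdots z^{i_{2n}}\, p_\eps(z)\, dz = \sum_{k=2}^{2n} \eps\, u^{i_1 i_k}(\xb) \int_{\Rd} z^{i_2} \cdots \widehat{z^{i_k}} \cdots z^{i_{2n}}\, p_\eps(z)\, dz\,.
\]

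Finally I would proceed by induction on $n$. The base case $n=0$ gives $\int p_\eps\, dz = 1$, which matches the empty product over the (unique) empty pairing. For the inductive step, the induction hypothesis applied to each integral on the right gives $\eps^{n-1}\sum_{P'} \prod_{\{a,b\} \in P'} u^{i_a i_b}(\xb)$, where $P'$ ranges over pairings of the $2n-2$ indices $\{i_2, \ldots, i_{2n}\} \setminus \{i_k\}$. The natural bijection between pairings $P$ of $\{1, \ldots, 2n\}$ and pairs $(k, P')$ — where $k$ denotes the partner of $i_1$ and $P'$ pairs up the remaining indices — collapses the double sum into $\eps^n \sum_P \prod_{\{a,b\} \in P} u^{i_a i_b}(\xb)$, completing the proof.

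There is no real obstacle beyond the combinatorial bookkeeping in the last step; a useful sanity check is $n=1$, where the recursion reduces to $E[Z^{i_1} Z^{i_2}] = \eps\, u^{i_1 i_2}(\xb)$, the covariance itself, and $n=2$, which produces the three expected pairings $\{12,34\}, \{13,24\}, \{14,23\}$.
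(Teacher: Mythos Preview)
Your proof is correct and follows the standard Stein/integration-by-parts route to Wick's theorem. Note, however, that the paper does not actually prove this lemma: it is stated as a recalled classical result with a citation to~\cite{GVK025155202}, so there is no proof in the paper to compare against. Your argument supplies a clean self-contained derivation that the paper omits.
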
 

We are now ready to prove Theorem~\ref{thm:quantitativelaplace}.

\begin{proof}[Proof of Theorem~\ref{thm:quantitativelaplace}]
Let $\calU$ be the class of functions $u$ satisfying Assumption~\ref{ass:ux}. It is easy to check that $\calU$ is convex (in fact it is a convex cone). For the entirety of the proof we fix a function $u\in\calU$, a function $r\in C^4(\Rd)$ as well as $\eps>0$. In view of Assumption~\ref{ass:ux} the unique minimizer of $u$ is denoted by $\xb$. The proof revolves around the functional
\[
    F_\eps(w)=V \int_{\Rd} \frac{e^{-w(x)/\eps}}{(2\pi\eps)^{d/2}}\,r(x) dx\,,
\]
defined for $w\in\calU$ and where we write 
\[
  V = \sqrt{\smash[b]{\det[u_{ij}(\xb)]}}\,.
\]
Since $V$ depends on $u$ and not $w$, it is therefore constant throughout the proof. We choose to include it in $F_\eps$ to avoid writing many $\det[u_{ij}(\xb)]$ terms later on.
We also set 
\[
  u_0(x)=\frac 1 2 u_{ij}(\xb)(x^i-\xb^i)(x^j-\xb^j).
\]
Note that $u_0\in\calU$ and that the zeroth, first and second-order derivatives of $u_0$ and $u$ coincide at $\xb$. The main idea of the proof is then to estimate the desired quantity $F_\eps(u)$ by its Taylor expansion at $u_0$, in the form
\begin{multline}\label{eq:taylor-expansion-Feps}
    \Big\lvert F_\eps(u)-F_\eps(u_0)-\delta F_\eps(u_0)(u-u_0)-\frac 12 \delta^2\!F_\eps(u_0)(u-u_0)^{\otimes 2} - \frac 16 \delta^3\!F_\eps(u_0)(u-u_0)^{\otimes 3}\Big\rvert \le \\\frac{1}{24}\sup_{w\in\calU}\abs{\delta^4\!F_\eps(w)(u-u_0)^{\otimes 4}}\,.
\end{multline}
Here it is important that $\calU$ is convex. In the preceding formula, we denoted the $n$-th directional derivative of $F_\eps$ in direction $h$ by 
\[
  \delta^n\!F_\eps(w)(h)^{\otimes n} = \left.\frac{d^n}{dt^n}\right\lvert_{t=0} F_\eps(w+th)\,.
\]
The notation $(h)^{\otimes n}$ stands for $(h, \dots, h)$ ($n$ times).

Let us take a look at the directional derivatives of $F_\eps$. They can be easily computed: for $n\ge 1$,
\begin{equation*}
    \delta^n\!F_\eps(w)(h)^{\otimes n} = (-\eps)^{-n} V \int_{\Rd}  h(x)^n \frac{e^{-w(x)/\eps}}{(2\pi\eps)^{d/2}}\,r(x) dx\,.
\end{equation*} 

Since this proof makes repeated use of \emph{Taylor remainders}, we recall that for a function $f$ defined over $\Rd$ we denote $R_nf$ the Taylor remainder of order $n$ at the point $\xb$, see~\eqref{eq:def-taylor-remainder}. Thus $R_0f(z)=f(\xb+z)$, $R_1f(z)=f(\xb+z)-f(\xb)$, 
\[
  R_2f(z)=f(\xb+z)-(f(\xb)+\partial_if(\xb)z^i)
\]
(with the implied sum of repeated indices), and
\[
  R_3f(z)=f(\xb+z)-(f(\xb)+\partial_if(\xb)z^i-\frac 12 \partial_{ij}f(\xb)z^iz^j)\,.
\]
When $f$ is  controlled in an $L^\infty$-type Sobolev space, we  use the standard Taylor bound 
\begin{equation*}
    \abs{R_nf(z)}\le \frac{1}{n!}\norm{D^nf}_{L^{\infty}(\Rd)}\abs{z}^n\,.
\end{equation*}
We also need the identity
\begin{equation}  \label{eq:identity-Rnf}
  R_nf(z)=\frac{1}{n!}\partial_{i_1\dots i_n}f(\xb)z^{i_1}\dots z^{i_n} + R_{n+1}f(z)\,,
\end{equation}
which is immediate to derive, as well as higher-order generalizations of it. Finally to alleviate notation we write 
\[
    p_\eps(z) = V \frac{e^{-\frac{1}{2\eps} u_{ij}(\xb)z^iz^j}}{(2\pi\eps)^{d/2}}\,,
\]
which is the Gaussian density with zero mean and covariance matrix $[\eps u^{ij}(\xb)]$. The moments of $p_\eps$ can be computed by Isserli's formula recalled in Lemma~\ref{lemma:isserlis}.

We now proceed to evaluate the Taylor terms in~\eqref{eq:taylor-expansion-Feps}. For each term we compute exactly the terms of order $0$ and $1$ in $\eps$ and derive an explicit $O(\eps^2)$ bound for the remainder.


\paragraph{First term $F_\eps(u_0)$.}
The first term is
\[
    F_\eps(u_0)=\int_{\Rd} r(\xb + z)  \, p_\eps(z)dz \,.
\]
We start by expanding $r(x)$ as a Taylor sum up to $O(\eps^2)$ terms. We thus need a third-order Taylor approximation on $r$,
\[
  r(\xb\!+z) = r(\xb)+\partial_ir(\xb)z^i+\frac 12 \partial_{ij}r(\xb)z^iz^j + \frac 16 \partial_{ijk}r(\xb) z^iz^jz^k + R_4r(z)\,.\]
By symmetry in the Gaussian integral all the odd-order terms cancel and we are left with
\[
    F_\eps(u_0)= \int_{\Rd} \Big[r(\xb) + \frac 12 \partial_{ij}r(\xb)z^iz^j + R_4r(z) \Big]  \, p_\eps(z)dz\,.
\]
By expanding the brackets we obtain three integrals. The first one sums up to $r(\xb)$. The second and third integrals are denoted $I_1$ and $I_2$ respectively. For $I_1$ we use the Isserlis formula given by Lemma~\ref{lemma:isserlis} to compute the Gaussian moment of order $2$ and find that 
\[
    I_1 = \frac \eps 2 u^{ij}\partial_{ij}r\Big|_{x=\xb}\,.
\]
The other integral $I_2=\int_{\Rd}R_4r(z) p_\eps(z)dz$ can be bounded by $O(\eps^2)$ terms as follows.
First Assumption~\ref{ass:ux} gives us a control on the Hessian of $u$ at $\xb$,
\[
  u_{ij}(\xb)z^iz^j\ge \lambda\abs{z}^2.
\] 
This implies an inequality we will reuse throughout the proof,
\begin{equation}\label{eq:upper-bound-p_eps}
  p_\eps(z) \le V \lambda^{-d/2} G_{\eps/\lambda}(z)\,,
\end{equation}
with $G_{\eps/\lambda}(z)=(2\pi\eps/\lambda)^{-d/2}e^{-\lambda/(2\eps)\abs{z}^2}$. Therefore
\[
  \abs{I_2} \le V\lambda^{-d/2}\int_{\Rd} \abs{R_4r(z)} G_{\eps/\lambda}(z)\,dz\,.
\]
Bounds on this type of integrals are provided by Lemma~\ref{lemma:taylor-remainder-convolution}. Here it gives us
\[
    \int_{\Rd} \abs{R_4r(z)} G_{\eps/\lambda}(z)\,dz \le \eps^2\lambda^{-2}\, c(d) (K_{\eps/\lambda}*\abs{D^4r})(\xb)\,,
\] 
where $c(d)$ is a constant depending only on the dimension and $K_{\eps/\lambda}$ is the kernel defined by~\eqref{eq:def-Keps:1}.
In conclusion we showed
\begin{equation} \label{laplace-prop-proof:term1}
  F_\eps(u_0) = r +\frac\eps 2u^{ij}\partial_{ij}r + \eps^2 \calR_\eps\,,
\end{equation}
where the right-hand side is evaluated at $x=\xb$ and with 
\[
    \abs{\calR_\eps(\xb)} \le c(d)  V\lambda^{-d/2}\lambda^{-2} (K_{\eps/\lambda}*\abs{D^4r})(\xb)\,.
\]


\paragraph{Second term $\delta F_\eps(u_0)(u-u_0)$.}
The next term in the expansion~\eqref{eq:taylor-expansion-Feps} is 
\[
    \delta F_\eps(u_0)(u-u_0) = -\eps^{-1}\int_{\Rd} r(\xb+z) \big(u(\xb+z)-u_0(\xb+z)\big) \, p_\eps(z)dz.
\]
We write $r(\xb+z)=r + \partial_irz^i + \frac 12 \partial_{ij}rz^{i}z^{j} + R_3r(z)|_{x=\xb}$ and expand the sum against $u(\xb+z)-u_0(\xb+z)$. Since $u(\xb)=0$ and the first derivative also vanishes, $ u_i(\xb)=0$, the expression $u(\xb+z)-u_0(\xb+z)$ turns out to be exactly the Taylor remainder $R_3u(z)=u(\xb+z)-(u+u_iz^i+\frac 12 u_{ij}z^iz^j)(\xb)$. Then $R_3u(z)$ is successively expressed with more or fewer terms, depending on which $r$ term is in front of it, in order to obtain the desired order in $\varepsilon$,
\begin{align*}
  r(\xb+z) \, R_3u(z) = &r  \Big(\frac{1}{3!}u_{ijk}z^iz^jz^k + \frac{1}{4!}u_{ijk\ell}z^iz^jz^kz^{\ell} + \frac{1}{5!}u_{ijk\ell m}z^iz^jz^kz^\ell z^m \\+ R_6u(z)\Big)
   &+ \partial_irz^i \Big(\frac{1}{3!}u_{jk\ell}z^jz^kz^{\ell} + \frac{1}{4!}u_{jk\ell m}z^jz^kz^\ell z^m + R_5u(z)\Big) \\
   &+ \frac 12\partial_{ij}rz^iz^j \Big(\frac{1}{3!} u_{k\ell m}z^kz^\ell z^m +R_4u(z)\Big)
   + R_3r(z) \,R_3u(z)\,,
\end{align*}
where all the functions on the right-hand side are evaluated at $x=\xb$. Each expression enclosed in brackets is a different way to write $R_3u(z)$, as in~\eqref{eq:identity-Rnf}. After taking advantage of cancellations in the Gaussian integral due to symmetry we are left with 
\begin{multline*}
    \delta F_\eps(u_0)(u-u_0)= -\eps^{-1}\int_\Rd \Big(\frac{1}{4!} r u_{ijk\ell} + \frac{1}{3!} \partial_i ru_{jk\ell}\Big)(\xb) z^iz^jz^kz^\ell \, p_\eps(z)dz -\\
    \eps^{-1}\int_\Rd \Big(r R_6u(z)+\partial_i rz^i R_5u(z) + \frac 12\partial_{ij}rz^iz^jR_4u(z) + R_3r(z) \,R_3u(z)\Big)_{x=\xb} \, p_\eps(z)dz\,.
\end{multline*}
The first integral is denoted by $I_1$, it is a first-order term in $\eps$. The second integral is denoted $I_2$ and contains higher-order terms. Let us focus first on $I_1$. Using the Isserlis formula in Lemma~\ref{lemma:isserlis} we calculate (dropping $\xb$ for convenience)
\begin{align*}
    I_1 = -\eps\Big(\frac{1}{4!} r \,u_{ijk\ell} + \frac{1}{3!} \partial_ir\,u_{jk\ell}\Big)\big(u^{ij}u^{k\ell}+u^{i\ell}u^{jk}+u^{ik}u^{j\ell}\big)\,.
\end{align*}
The object $u_{ijk\ell}$ is invariant by a permutation of the indices $i,j,k,\ell$. As for $\partial_ir\partial_{jk\ell}u$ it is not totally symmetric in the indices but its invariance to permutation of the indices $j,k,\ell$ implies that the expression of the rightmost bracket can be simplified to $3u^{ij}u^{k\ell}$. Therefore 
\[
    I_1 = -\frac\eps 8 r\, u_{ijk\ell}\,u^{ij}u^{k\ell} - \frac\eps 2 \partial_ir \,u_{jk\ell}\,u^{ij}u^{k\ell}\,.
\]
We now turn our attention to 
\begin{multline*}
I_2 = -\eps^{-1}\int_{\Rd} \Big(r R_6u(z)+\partial_i rz^i R_5u(z) + \frac 12\partial_{ij}rz^iz^jR_4u(z) + \\ R_3r(z) \,R_3u(z)\Big)_{x=\xb} \, p_\eps(z)dz\,.
\end{multline*}
We proceed similarly as for $F_\eps(u_0)$. Expressions of $u$ are bounded in $L^\infty$ by $\abs{R_nu(z)}\le\frac{1}{n!}\normlinf{D^nu}\abs{z}^n$ and $p_\eps$ is bounded using~\eqref{eq:upper-bound-p_eps}. We obtain 
\begin{equation*}
  \abs{I_2}\le c\,\eps^{-1}V\lambda^{-d/2} \norm{D^3u}_{W^{3,\infty}} \int_{\Rd} \Big( \abs{z}^6\abs{D^{\le 2}r (\xb)} + \abs{z}^3\abs{R_3r(z)} \Big)  G_{\eps/\lambda}(z)\,dz\,,
\end{equation*}
for a numeric constant $c>0$.
The first term $\abs{z}^6\abs{D^{\le 2}r (\xb)}$ yields an elementary Gaussian moment and we use Lemma~\ref{lemma:taylor-remainder-convolution} for  $\abs{R_3r(z)}$. We obtain
\[
 \abs{I_2} \le \eps^2\,c(d)V\lambda^{-d/2}\lambda^{-3} \norm{D^3u}_{W^{3,\infty}(\Rd)} (\abs{D^{\le 2}r} + (K_{\eps/\lambda}*\abs{D^3r}))(\xb)\,.
\]
In conclusion we have proven 
\begin{equation} \label{laplace-prop-proof:term2}
  \delta F_\eps(u_0)(u-u_0) = -\frac\eps 8 r\, u_{ijk\ell}\,u^{ij}u^{k\ell} -\frac\eps 2 \partial_ir\, u_{jk\ell}\,u^{ij}u^{k\ell} + \eps^2 \calR_\eps\,,
\end{equation}
where the right-hand side is evaluated at $x=\xb$, and with 
\begin{equation*}
  \abs{\calR_\eps(\xb)} \le c(d)V\lambda^{-d/2}\lambda^{-3} \norm{D^3u}_{W^{3,\infty}(\Rd)} \Big[\abs{D^{\le 2}r}(\xb) + (K_{\eps/\lambda}*\abs{D^3r})(\xb)\Big]\,.
\end{equation*}


\paragraph{Third term $\frac 12\delta^2\!F_\eps(u_0)(u-u_0)^{\otimes 2}$.}
The next term in~\eqref{eq:taylor-expansion-Feps} is
\[
    \frac 12\delta^2\!F_\eps(u_0)(u-u_0)^{\otimes 2} = \frac 12\eps^{-2}\int_{\Rd} r(\xb+z)\big(u(\xb+z)-u_0(\xb+z)\big)^2 \, p_\eps(z)dz.
\]
Following the line of proof for the previous $F_\eps$ terms, we write $r(\xb+z)= r + \partial_irz^i +  R_2r(z)$ and $u(\xb+z)-u_0(\xb+z)=R_3u(z)$. Then the term $r(\xb+z) (R_3u(z))^2$ is broken up as follows,
\begin{align*}
  &  r(\xb+z) \, R_3u(z) R_3u(z) = \\
  & \quad  r\, \frac{1}{3!} u_{ijk} z^iz^jz^k  \Big(\frac{1}{3!}u_{\ell mn}z^\ell z^mz^n + \frac{1}{4!}u_{\ell mns}z^\ell z^mz^nz^s + R_5u(z)\Big)\\
  & +r\, \frac{1}{4!}u_{ijk\ell} z^iz^jz^kz^\ell \Big(\frac{1}{3!}u_{mns}z^mz^nz^s + R_4u(z)\Big)\\
  & +r\, R_5u(z) R_3u(z) \\
  & +\partial_ir z^i \Big(\frac{1}{3!} u_{jk\ell}z^j z^kz^\ell \Big) \Big(\frac{1}{3!} u_{mns}z^mz^nz^s  + R_4u(z)\Big) \\
    &  + \partial_ir z^i \, R_4u(z) \, R_3u(z) \\
    &  + R_2r(z)\,  R_3u(z) \, R_3u(z)\,,
\end{align*}
where the right-hand side is evaluated at $x=\xb$ where needed. Under the Gaussian integral, we obtain first a moment of order $6$,
\[
  I_1 := \eps^{-2}\frac{1}{2\cdot (3!)^2} r\,u_{ijk}u_{\ell mn}\Big\rvert_{x=\xb}\int_{\Rd} z^iz^jz^kz^\ell z^mz^n \,p_\eps(z) dz\,.
\]
The moments of order $7$ vanish and all the other terms form various moments of order $8$ that we denote $I_2$. 
Let us first compute $I_1$. Using Isserlis' formula and symmetries of the partial derivatives we have 
{\begin{align*}
  I_1 &=\frac{\eps}{72}r\,u_{ijk}\,u_{\ell mn}\big(9u^{ij}u^{k\ell}u^{mn} +6 u^{i\ell}u^{jm}u^{kn} \big)\\
  &=\frac{\eps}{8}r\,u_{ijk}u_{\ell mn} \,u^{ij}u^{k\ell}u^{mn} +\frac{\eps}{12}r\,u_{ijk}\,u_{\ell mn}u^{i\ell}u^{jm}u^{kn}\,.
\end{align*}}

It remains to bound $I_2$. By following a similar line of proof as was done for the $\delta F_\eps(u_0)(u-u_0)$ term, we obtain
\[
  \abs{I_2} \le c\,\eps^{-2}V\lambda^{-d/2} \norm{D^3u}^2_{W^{2,\infty}} \int_{\Rd}\Big(\abs{z}^8\abs{D^{\le 1} r(\xb)} + \abs{z}^6\abs{R_2r(z)}\Big)\,G_{\eps/\lambda}(z)dz\,.
\]
By Lemma~\ref{lemma:taylor-remainder-convolution} we deduce
\[
  \abs{I_2} \le \eps^2\,c(d)V\lambda^{-d/2}\lambda^{-4} \norm{D^3u}^2_{W^{2,\infty}} (\abs{D^{\le 1}r} + (K_{\eps/\lambda}*\abs{D^2r}))(\xb)\,.
\]
In conclusion we have proven 
\begin{equation} \label{laplace-prop-proof:term3}
  \frac 12\delta^2 F_\eps(u_0)(u\!-\!u_0)^{\otimes 2}  = \frac{\eps}{8}r\,u_{ijk}u_{\ell mn} \,u^{ij}u^{k\ell}u^{mn} +\frac{\eps}{12}r\,u_{ijk}\,u_{\ell mn}u^{i\ell}u^{jm}u^{kn} + \eps^2 \calR_\eps \,,
\end{equation}
evaluated at $x=\xb$, and with
\begin{equation*}
  \abs{\calR_\eps(\xb)}
  \le c(d)V\lambda^{-d/2}\lambda^{-4} \norm{D^3u}^2_{W^{2,\infty}} \Big[\abs{D^{\le 1}r}(\xb) + (K_{\eps/\lambda}*\abs{D^2r})(\xb)\Big]\,.
\end{equation*}


\paragraph{Fourth term $\frac 16\delta^3\!F_\eps(u_0)(u-u_0)^{\otimes 3}$.}

We use similar arguments as for the previous terms to deal with
\[
    \delta^3\!F_\eps(u_0)(u-u_0)^{\otimes 3} = -\eps^{-3}\int_{\Rd} r(\xb+z)\big(u(\xb+z)-u_0(\xb+z)\big)^3 \, p_\eps(z)dz\,.
\]
This term is $O(\eps^2)$ so we simply bound it: we split
\begin{align*}
  & \quad r(\xb+z) (R_3u(z))^3 = \\
  & +r\,\frac{1}{3!}u_{ijk}z^iz^jz^k\frac{1}{3!}u_{\ell mn}z^\ell z^mz^n \Big(\frac{1}{3!}u_{abc}z^az^bz^c + R_4u(z)\Big)\\
  & +r\,\frac{1}{3!}u_{ijk}z^iz^jz^k R_4u(z)R_3u(z) \\
  & +r\,R_4u(z)R_3u(z) R_3u(z)\\
  & +R_1r(z) R_3u(z)R_3u(z)R_3u(z)\,.
\end{align*}
We end up with
\begin{equation} \label{laplace-prop-proof:term4}
  \delta^3\!F_\eps(u_0)(u-u_0)^{\otimes 3} = \eps^2 \calR_\eps(\xb)\,,
\end{equation}
with 
\[
  \abs{\calR_\eps(\xb)} \le \, c(d) V\lambda^{-d/2}\lambda^{-5}\norm{D^3u}^3_{W^{1,\infty}} \Big[\abs{r(\xb)} + (K_{\eps/\lambda} * \abs{Dr})(\xb)\Big]\,.
\]


\paragraph{Remainder term $\delta^4\!F(w)(u-u_0)^{\otimes 4}$.}
The final step is to bound 
\begin{multline*}
  \delta^4\!F(w)(u-u_0)^{\otimes 4}=\\ \eps^{-4}V\int_\Rd r(\xb+z) \big(u(\xb+z)-u_0(\xb+z)\big)^4 \frac{e^{-w(\xb+z)/\eps}}{(2\pi\eps)^{d/2}}\,dz
\end{multline*}
uniformly over $w\in\calU$. Contrary to the previous terms we cannot use~\eqref{eq:upper-bound-p_eps} but Assumption~\ref{ass:ux}\ref{ass:ux:bound-below} gives us precisely what we need. Indeed for any $w$ satisfying Assumption~\ref{ass:ux}\ref{ass:ux:bound-below} we have $e^{-w(\xb+z)/\eps}\le e^{-\lambda\abs{z}^2/(2\eps)}$, and therefore
\[
  \frac{e^{-w(\xb+z)/\eps}}{(2\pi\eps)^{d/2}} \le \lambda^{-d/2} G_{\eps/\lambda}(z).
\]
From then we can finish up as for the previous $F_\eps$ terms. We bound 
\begin{align*}
    \abs{r(\xb+z) \, (R_3u(z))^4} \le c\, \normlinf{D^3u}^4 \abs{r(\xb+z)}\abs{z}^{12},
\end{align*}
which leads to
\begin{equation} \label{laplace-prop-proof:term5}
  \delta^4\!F_\eps(w)(u-u_0)^{\otimes 4} = \eps^{2} \calR_\eps(\xb)\,,
\end{equation}
with 
\[
    \abs{\calR_\eps(\xb)}\le  c(d)V\lambda^{-d/2}\lambda^{-6}\norm{D^3u}^4_{L^{\infty}} (K_{\eps/\lambda} * \abs{r})(\xb)\,.
\]

\paragraph{Putting everything together.}
We see that in each estimate \eqref{laplace-prop-proof:term1}, \eqref{laplace-prop-proof:term2}, \eqref{laplace-prop-proof:term3}, \eqref{laplace-prop-proof:term4} and \eqref{laplace-prop-proof:term5} the remainder term $\calR_\eps$ can be bounded above by 
\[
  c(d, \lambda, \norm{D^3u}_{W^{3,\infty}}) V \Big[\abs{D^{\le 2}r}(\xb) + (K_{\eps/\lambda} * \abs{D^{\le 4}r})(\xb)\Big]\,.
\]
Dividing by $V=\sqrt{\smash[b]{\det[u_{ij}(\xb)]}}$, the desired result follows.
\end{proof}

\begin{proof}[Proof of Corollary~\ref{cor:quantitativelaplace}]
  Let $B'$ denote the open ball $B(\xb,\delta/2)$. By standard arguments there exists a function $\chi\in C^\infty(\Rd)$ such that $\chi\ge 0$, $\chi=1$ on $B'$ and $\chi$ has support inside $B$. We then define 
  \[
    \begin{cases}
      \tilde r(x) = \chi(x) r(x) & \text{ for  }x\in B\\
      r(x)=0 & \text{ for }x\in \Rd\setminus B.
    \end{cases}
  \]
  Let $q(x)= \frac\lambda 2\abs{x-\xb}^2$. We similarly define 
  \[
    \begin{cases}
      \tilde u(x) = \chi(x)(u(x)-q(x)) + q(x) & \text{for } x\in B\\
      u(x)=q(x) &\text{ for }x\in \Rd\setminus B.
    \end{cases}
  \]

  We want to apply Theorem~\ref{thm:quantitativelaplace} with remainder in the form~\eqref{eq:remainer-infty}. Because of how $\tilde r$ was constructed it is easy to check that $\tilde r\in W^{4,\infty}(\Rd)$ with $\norm{\tilde r}_{W^{4,\infty}(\Rd)}\le C \norm{r}_{W^{4,\infty}(B)}$, for a constant $C$ that depends on $\delta$ and $d$. It is also straightforward to check that $\tilde u$ satisfies Assumption~\ref{ass:ux}. Also note that $\tilde r$ (resp. $\tilde u$) only depends on the values of $r$ (resp. $u$) inside $B$. 

  Our goal is then to replace the integral over $X$, namely $\int_X \frac{e^{-u(x)/\eps}}{(2\pi\eps)^{d/2}} \,r(x)dx$ by the integral over $\Rd$, namely $\int_\Rd \frac{e^{-\tilde u(x)/\eps}}{(2\pi\eps)^{d/2}} \,\tilde r(x)dx$. We bound 
  \begin{multline} \label{eq:proof-cor-quant-eq1}
    \Big\lvert\int_X \frac{e^{-u(x)/\eps}}{(2\pi\eps)^{d/2}} \,r(x)dx-\int_\Rd \frac{e^{-\tilde u(x)/\eps}}{(2\pi\eps)^{d/2}} \,\tilde r(x)dx\Big\rvert \le \\
    \int_X \Big\lvert\frac{e^{-u(x)/\eps}}{(2\pi\eps)^{d/2}} \,r(x)-\frac{e^{-\tilde u(x)/\eps}}{(2\pi\eps)^{d/2}} \,\tilde r(x)\Big\rvert dx + \int_{\Rd\setminus X} \frac{e^{-\tilde u(x)/\eps}}{(2\pi\eps)^{d/2}} \,\abs{\tilde r(x)}\,dx\,.
  \end{multline}
  In the first term on the right-hand side, the integrand vanishes on $B'$. Outside of $B'$, we simply bound the difference by the sum, thus
  \[
    \int_X \Big\lvert\frac{e^{-u(x)/\eps}}{(2\pi\eps)^{d/2}} \,r(x)-\frac{e^{-\tilde u(x)/\eps}}{(2\pi\eps)^{d/2}} \,\tilde r(x)\Big\rvert dx \le
     \int_{X\setminus B'} \frac{e^{-u(x)/\eps}}{(2\pi\eps)^{d/2}} \,\abs{r(x)}+\frac{e^{-\tilde u(x)/\eps}}{(2\pi\eps)^{d/2}} \,\abs{\tilde r(x)}dx\,.
  \]
  Combining with~\eqref{eq:proof-cor-quant-eq1} we obtain 
  \begin{multline*}
    \Big\lvert\int_X \frac{e^{-u(x)/\eps}}{(2\pi\eps)^{d/2}} \,r(x)dx-\int_\Rd \frac{e^{-\tilde u(x)/\eps}}{(2\pi\eps)^{d/2}} \,\tilde r(x)dx\Big\rvert \le \\
    \int_{X\setminus B'} \frac{e^{-u(x)/\eps}}{(2\pi\eps)^{d/2}} \,\abs{r(x)}
     + \int_{\Rd\setminus B'} \frac{e^{-\tilde u(x)/\eps}}{(2\pi\eps)^{d/2}} \,\abs{\tilde r(x)}\,dx\,.
  \end{multline*}

  On $X\setminus B'$ we have $u(x)\ge\frac\lambda 2\delta^2$ and on $\Rd\setminus B'$ we have that $\tilde u(x)\ge \frac\lambda 2\abs{x-\xb}^2\ge \frac\lambda 2\delta^2$ . Thus 
  \begin{multline} \label{eq:proof-cor-quant-eq2}
    \Big\lvert\int_X \frac{e^{-u(x)/\eps}}{(2\pi\eps)^{d/2}} \,r(x)dx-\int_\Rd \frac{e^{-\tilde u(x)/\eps}}{(2\pi\eps)^{d/2}} \,\tilde r(x)dx\Big\rvert \le \\
    \le (2\pi\eps)^{-d/2} e^{-\lambda \delta^2/(2\eps)} \big(\norm{r}_{L^1(X)} + \norm{\tilde r}_{L^1(\Rd)} \big)\,.
  \end{multline}
  We have  $\norm{\tilde r}_{L^1(\Rd)}\le C \norm{r}_{L^1(B)}$ and thus there exists a constant $C(\lambda, \delta,d)$ such that for all $\eps>0$,  the right-hand side of~\eqref{eq:proof-cor-quant-eq2} can be bounded by 
  \[
    C \eps^2\norm{r}_{L^1(X)}.
  \]
  On the other hand, we now perform the Laplace expansion of 
  \[
    \int_\Rd \frac{e^{-\tilde u(x)/\eps}}{(2\pi\eps)^{d/2}} \,\tilde r(x)dx\,.
  \]
  The zeroth and first-order terms consist of quantities depending on derivatives of $\tilde u$ and $\tilde r$ evaluated at $\xb$. Since $\tilde u,u$ and $\tilde r,r$ are equal on $B'$ we may remove the tilde. The remainder, in the form~\eqref{eq:remainer-infty} can be bounded by 
  \[
    \abs{\calR(\eps)} \le C\norm{\tilde r}_{W^{4,\infty}(\Rd)} \le C\norm{r}_{W^{4,\infty}(B)}\,.
  \]

\end{proof}

\section{Examples}
\label{sec:applications}
In this section, we discuss some examples that naturally arise without discussing the regularity assumptions imposed in Section \ref{sec:geometric_laplace} and only focusing on the geometric formula of the first-order term. We treat various cases in which simplifications of different kinds occur. The first case recovers the standard Laplace formula using a translation invariant cost. We treat the case of a parametrix for the heat kernel for which the second fundamental form vanishes. The likelihood in Bayesian modelling exhibits a flat geometry of both on $\Sigma$ and on $X \times Y$. Lastly, the Fenchel--Young gap on Euclidean spaces induces a Hessian geometry on the surface $\Sigma$ in a flat ambient space, and up to a change of variable, we obtain a remarkably simple formula.

\subsection{The translation invariant cost and the usual Laplace formula} \label{sec:translation-invariant-cost}

Let $U\colon\R^d \to \R$ be a strongly convex and nonnegative function such that $U(0) = 0$, the uniquely attained minimum of the function. 
We choose $X$ to be any bounded open subset of $\Rd$ (say, a ball), $Y=\Rd$ and we consider the extended function $u\colon X \times \R^d \to \R$, $u(x,y) = U(x - y)$. This corresponds to a translation cost often used in optimal transportation. 

The function $u$ satisfies the conditions of Section \ref{sec:geometric_laplace} and the vanishing set is the diagonal $\Sigma = \{ (x,x)  : x\in X\}$. We consider the density $r(dx,dy)= F(x - y)dxdy$, where $dx$ and $dy$ stand for the Lebesgue measures on $X$ and $Y$, respectively. 
Note that all the geometric quantities evaluated on $\Sigma$ are constant on $\Sigma$. Then the standard Laplace method is given a geometric formulation by our formula. 

Writing $U=U(z)$, we denote  
\[
    U_i=\frac{\partial U}{\partial z^i}\,, \quad U_{ij}=\frac{\partial^2 U}{\partial z^i\partial z^j}\,, 
\]
and so on. Then $u(x,y)=U(x-y)$ implies that $u_i=U_i$, $u_\ib=-U_i$, etc, and therefore  we can associate in this way a barred index on $u(x,y)$ with its non barred version on $U(z)$. The geometric quantities read as follows:
\begin{enumerate}[(a)]
\item The Kim--McCann metric on the product space is $\gt = \frac 12 \begin{pmatrix} 0 & D^2U \\ D^2U & 0\end{pmatrix}$. It is non degenerate since the Hessian matrix $D^2U=(U_{ij})$ is non singular. The volume form is $ \mt = \det(D^2U)$.
\item On $\Sigma$, $g_{ij} = U_{ij}(0)$ which does not depend on $x,y$ and $m =\sqrt{ \operatorname{det}(D^2U(0))} = \sqrt{ \mt}$. Since this metric is constant the Christoffel symbols and curvature vanish.
\item $f(x,y)\coloneqq \frac{dr(x,y)}{d\mt(x,y)}=\frac{F(x-y)}{\det D^2U(x-y)}$.
\item $ \Deltat f = -4U^{i j}\partial_{ij} \Big(F/\det D^2U\Big)$.
\item $\Gammat_{ij}^k = U^{k\ell} U_{\ell ij} \mbox{ and }   \Gammat_{\bari \jb}^\kbar = -U^{k\ell} U_{\ell ij}$. Since $\Gamma^k_{ij}=0$, \eqref{eq:Gamma-Gammat-h} gives $h^k_{ij} = U^{k\ell}(0) U_{\ell ij}(0) $.
\item $2 \Rt_{i\jb \kb \ell} = u_{i\jb \kb \ell} - u_{i \ell \sbar }u^{\sbar t}u_{t \jb \kb} = U_{ijk\ell} - U_{i \ell s}U^{s t}U_{tjk} \,.$
\item $\Rt  = 8 u^{i\kb} u^{\jb \ell} \Rt_{i\jb\kb \ell} = 4 U^{ik} U^{j\ell} (U_{ijk\ell} - U_{i \ell s}U^{s t}U_{tjk})$ and $R = 0$.
\item The norms of the second fundamental form and the mean curvature are given by 
\begin{align*}
 & \bracket{h,h} = - U_{ijk}U_{\ell mn}U^{i\ell}U^{jm}U^{kn}\,,
\\
   & \bracket{H,H} = - U_{ijk}U_{\ell mn}U^{ij}U^{k\ell}U^{mn}\,,
\end{align*}
where all the objects on the right-hand side are evaluated at $z=0$.
\end{enumerate}
Finally we see that since all the quantities depending on the geometry and $F$ are constant on $\Sigma$, they are constant on $\partial\Sigma$ and thus the boundary terms vanish. 

Let us instantiate this example in one dimension. 
We have $h = \frac {U^{(3)}}{U^{(2)}}$, $H =\frac {U^{(3)}}{(U^{(2)})^2} $, $\Rt = \frac{4}{(U^{(2)})^2}\left( U^{(4)} - \frac{(U^{(3)})^2}{U^{(2)}}\right)$, $\langle h,h\rangle = \langle H,H\rangle = -\frac{(U^{(3)})^2}{(U^{(2)})^3}$ and $-\frac 18 \Deltat (\frac {F}{U^{(2)}}) = \frac{F^{(2)}}{2(U^{(2)})^2} - \frac{F' U^{(3)}}{(U^{(2)})^3} + \frac{F (U^{(3)})^2}{(U^{(2)})^4} - \frac{Fu^{(4)}}{2(U^{(2)})^3}$. 
The Laplace double integral 
\[
    \int_X\int_{\R}\frac{e^{-U(x-y)/\eps}}{(2\pi\eps)^{1/2}}F(x-y)\,dxdy
\]
reduces to an integral over $z=x-y\in\R$ multiplied by the volume of $X$, and
we get
\begin{multline*}
    \int_{\R}\frac{e^{-U(z)/\eps}}{(2\pi\eps)^{1/2}}F(z)\,dz  =  \frac{F(0)}{\sqrt{U^{(2)}}(0)} \,+  \eps \left( \frac{1}{2\sqrt{U^{(2)}(0)}}\left(\frac{F}{U^{(2)}}\right)''(0)\right) \\+ \eps\frac {\sqrt{U^{(2)}}(0)H}2 \left( \frac {F}{U^{(2)}}\right)'+\eps\Big( \frac{3}{32}{\Rt} - \frac{1}{12}\bracket{h,h} \Big)\frac{F(0)}{\sqrt{U^{(2)}(0)}} + O(\eps^2)\,.    
\end{multline*}
We obtain, as in \cite[Chapter 6]{Bender1999},
\begin{multline*}
\int_\RR e^{-U(z)/\epsilon}F(z)\,dz = \sqrt{\frac{2\pi\epsilon}{U^{(2)}(0)}}\\ \Bigg[F(0)+ \epsilon \left( \frac{F^{(2)}}{2U^{(2)}} - \frac{F U^{(4)}}{8 (U^{(2)})^2} - \frac{F'
    U^{(3)}}{2 (U^{(2)})^2} + \frac{5 F (U^{(3)})^2}{24 (U^{(2)})^3}  \right) + O(\epsilon^2) \Bigg]\,.
\end{multline*}

The corresponding formula in higher dimension is more difficult to find in the literature\footnote{It can be found in \cite[Chapter 6, Lemma 6.5.3]{Kolassa1997}, but with errors in the renormalization factor and in some of the signs of the coefficients.}; it takes the form
\begin{multline*}
\int_{\RR^d} e^{-U(z)/\epsilon}F(z) \,dz = \frac{(2\pi\epsilon)^{d/2}}{\sqrt{\operatorname{det}(U_{ij})}}\Bigg[F(0)\\+ \epsilon \Big[\frac12 U^{ij}\partial_{ij}F- \frac12\partial_i F U_{jk\ell}U^{ij}U^{k\ell} +F\Big(-\frac18 U_{ijk\ell} U^{ij}U^{k\ell} \\ +\frac{1}{12}U_{ijk}U_{\ell mn}U^{i\ell}U^{jm}U^{kn} + \frac18U_{ijk}U_{\ell mn}U^{ij}U^{k\ell}U^{mn}\Big)\Big] \Bigg] + O(\epsilon^2)\,.
\end{multline*}
It is the expression given in Theorem~\ref{thm:quantitativelaplace}.
To derive it from the translation-invariant cost, we use the additional formulas
\begin{multline*}
- \frac18\Deltat (r/\mt) = \frac{1}{m}\Big[\frac12 U^{ij}\partial_{ij}F - \partial_i F U_{jk\ell}U^{ij}U^{k\ell} \\ + F\Big(-\frac12 U_{ijk\ell} U^{ij}U^{k\ell}+\frac12U_{ijk}U_{\ell mn}U^{i\ell}U^{jm}U^{kn}+\frac12U_{ijk}U_{\ell mn}U^{ij}U^{k\ell}U^{mn}\Big)\Big]
\end{multline*}
and
\begin{equation*}
\frac14\nablat_{\!H}(r/\mt)= \frac1{2m}U^{ij}U^{km}U_{ijm}\left(\partial_kF-FU_{kst}U^{st}\right)\,.
\end{equation*}

\subsection{Small-time limit of the heat kernel}\label{SecHeatKernel}

Let $(M,g)$ be a Riemannian manifold without boundary. Take $X=Y=M$ and consider the function $u(x,y)=\frac12 d^2(x,y)$, where $d$ is the Riemannian distance on $M$. We note that $u$
is symmetric,  $u(x,y)=u(y,x)$ and vanishes on the diagonal, i.e. $\Sigma=\{(x,x) : x\in M\}$. Then the restriction of the Kim--McCann metric to $\Sigma$ is precisely $g$ and therefore $\Sigma$ is an isometric copy of $M$~\cite[Example 3.6]{kim2007continuity}. 

Let us fix some notation. Since $y(x)=x$, the matrix $\partial_iy^\ib$ is the identity and thus provides a way to identify a barred index with an unbarred index. For functions of one variable (either $x$ or $y$) we will therefore be allowed to identify $i \leftrightarrow \ib$, $j \leftrightarrow \jb$, and so on.

In this setting several simplifications occur. On $\Sigma$ we have:

\begin{enumerate}[(a)]
    \item $g_{ij}(x)=u_{ij}(x,x)=-u_{i\jb}(x,x)$;
    \item $\Gammat^k_{ij}=\Gammat^\kb_{\ib\jb} = \Gamma^k_{ij}$;
    \item $h=0$, $H=0$;
    \item $\mt=m^2$, $\partial_i\log\mt=\partial_i\log m$ and $\partial_{i\jb}\log\mt=-\Rt_{i\jb}$.
\end{enumerate}

We also note that (b)--(d) hold in greater generality, whenever $u$ is a symmetric function that vanishes on the diagonal.

Consider now the heat equation on $M$,
\[
    \partial_tq = \Delta q,
\]
where $\Delta$ is the Laplacian on $(M,g)$ acting on scalar functions. The \emph{heat kernel} is the function $p_t(x,y)$ giving the solution at time $t>0$ from an initial condition, 
\[
    q_t(y) = \int_M p_t(x,y)q_0(x)\,dm(x)\,.
\]
Here we integrate against the Riemannian volume form $m$. The heat kernel has well-known small time asymptotics of the following form~\cite{minakshisundaram_pleijel_1949},
\begin{equation*}
p_t(x,y) = \frac{e^{-d(x,y)^2/4t}}{(4\pi t)^{d/2}} \sum_{k = 0}^\infty t^k \Phi_k(x,y)\,,
\end{equation*}
where the functions $\Phi_k$ (sometimes called ``Hadamard coefficients'') are solutions to some particular ``transport'' partial differential equations, see also \cite{rosenberg1997,Chavel1984EigenvaluesIR}.  
Let us also remark that in several applications, the reverse point of view is rather taken, \textit{i.e.} one solves the heat equation to obtain an estimation of the distance squared \cite{Crane}.

Integrating $q_t$ against a test volume form $\mu$ we obtain a sum of double integrals of the type studied in Theorem~\ref{thm:laplace} (with $\eps=2t$),
\begin{equation}\label{eq:heat-kernel-series}
    \int_M q_t\,d\mu = \sum_{k=0}^\infty t^k\iint_{M\times M} \frac{e^{-u(x,y)/2t}}{(4\pi t)^{d/2}} \Phi_k(x,y) q_0(x)dm(x)d\mu(y)\,.
\end{equation}
Let us explore what happens in the small-time limit. We first focus on the first term $k=0$. Set $r(dx,dy)=\Phi_0(x,y) q_0(x)m(dx)\mu(dy)$. Then the zeroth-order term in our Laplace formula as $t\to 0^+$ is
\[
    \int_M \Phi_0(x,x) q_0(x)d\mu(x)\,.
\]
Since the other terms $k\ge 1$ are $O(t)$, equating left-hand side and right-hand side in~\eqref{eq:heat-kernel-series} leads to
\[
    \Phi_0(x,x)=1,
\]
which is the well-known first information one typically obtains on the Hadamard coefficients~\cite[Chapter 3.2]{rosenberg1997}. In other words, $\Phi_0=1$ on $\Sigma$ and this implies that the tangential gradient vanishes, $\nabla \Phi_0=0$. 

Let us now expand to first-order in $t$ the first two terms in~\eqref{eq:heat-kernel-series}, substract the zeroth-order term, divide by $t$ and take the limit $t\to 0^+$. We obtain 
\[
    \int_M \partial_tq \,d\mu\Big|_{t=0} = 2\int_M \Big[- \frac18 \Deltat f + f \Big(\frac{3}{32}{\Rt}  - \frac{1}{8}R \Big)\Big] \,dm + \int_M\Phi_1 q_0d\mu\,,
\]
with $f(x,y)\coloneqq \Phi_0(x,y)q_0(x)dm(x)d\mu(y)/d\mt(x,y)$. 
Note the factor $2$ coming from $\eps=2t$. After some simplification the right-hand side can be written
\begin{equation}\label{eq:heat-laplace-integral}
    \int_M \big[\Delta q_0 - \bracket{K\nabla^N\!\Phi_0,\nabla q_0} + \big(-\frac14\Deltat\Phi_0-\frac12\div(K\nabla^N\!\Phi_0) + \Phi_1 - (\frac{\Rt}{16}+\frac{R}{4})\big) q_0\big]\,d\mu.
\end{equation}
Since $\partial_tq=\Delta q$ it implies the following equations on the diagonal $\Sigma$,
\[
    \nabla^N\!\Phi_0 =0\quad\text{and therefore}\quad \nablat\Phi_0 = 0,
\]
\[
    -\frac14\Deltat\Phi_0 + \Phi_1 =\frac{1}{16}\Rt+\frac{1}{4}R\,.
\]
As a matter of fact, more is known on these coefficients, for instance $\Phi_1(x,x)=\frac16 R$~\cite[Chapter 3]{rosenberg1997}. 

Let us now consider a more general situation, the Fokker--Planck equation
\begin{equation}\label{eq:fokker-planck}
\partial_tq =\Delta q + \nabla_{\!a} q + cq\,,
\end{equation}
where $a$ is a vector field on $M$, $\nabla$ the covariant derivative and $c$ a scalar field on $M$. The small-time asymptotics of~\eqref{eq:fokker-planck} were recently studied by Bilal in~\cite{bilal2020}, who showed that formally there exists a small-time expansion of the form~\eqref{eq:heat-kernel-series}, with of course adjusted coefficients $\Phi_k(x,y)$. Starting from~\eqref{eq:heat-laplace-integral} we obtain the following equations on the diagonal $\Sigma$,
\begin{gather*}
    \Phi_0(x,x)=1,\\
    \nabla\Phi_0=0,\\
    \nabla^N\!\Phi_0=-Ka,\\
    -\frac14\Deltat\Phi_0 +\Phi_1= c-\frac12\div(a)+\frac{1}{16}\Rt+\frac14 R\,.
\end{gather*}
We also note that similarly to the heat kernel, Bilal obtains more information on the coefficients, for instance $\Phi_1(x,x)= \frac16 R -\frac12\div(a)-\frac14\abs{a}^2+c$.

To conclude this example, we also look at the heat flow from a slightly different angle: given an initial $q_0$ on $M$ we consider the evolution flow 
\[
    \tilde q_t(y)=\int_M \frac{e^{-d^2(x,y)/4t}}{(4\pi t)^{d/2}} q_0(x)dm(x)\,,
\]
and want to see how it deviates from the heat equation solution $q$ defined by~\eqref{eq:heat-kernel-series}. Similar computations to the ones above give
\[
    \partial_t\tilde q|_{t=0} = \Delta q_0-\Big(\frac{1}{16}\Rt +\frac14 R\Big) q_0\,.
\]
We see that there are additional terms comprised of curvatures. 
While $R$ is the classical scalar curvature of $M$, $\Rt$ is not a traditionally studied Riemannian invariant since it requires to endow $M\times M$ with the Kim--McCann geometry. However we see that it shows up naturally in this very classical problem.

\subsection{Likelihood in Bayesian models with Gaussian priors}\label{SecBayesianApplication}

Bayesian modeling postulates a model of the observed data $y \in \R^d$ as being generated by  the combination of measurements on $x \in \R^k$ through a function $F\colon \R^k \to \R^d$ which can be nonlinear, as well as some model of noise. A standard model is $y = F(x) + \sqrt{\eps} n$ where $n$ is a normal Gaussian variable on $\R^d$ and $\eps$ a positive real parameter. The associated likelihood is given by
$
\mathbb{P}(dy|x) = (2\pi\eps)^{-d/2} e^{-\frac 1{2\eps} \abs{y- F(x)}^2}\,.
$
Let us consider the particular case where $k = d$ and $F$ is a $C^4$ diffeomorphism. It is a typical case where the probability is readily expressed under the Kim--McCann framework. In this case, the map is $y(x) = F(x)$ and the geometry on $\Sigma$ is flat whereas the ambient geometry is not. Therefore, several simplifications occur.
The geometric quantities read
\begin{enumerate}[(a)]
\item The metric on the product space $\R^d \times \R^d$ is $\gt = \frac 12 \begin{pmatrix} 0 & DF \\ DF& 0\end{pmatrix}$. It is non degenerate if $DF$ is non singular, and $ \mt =\abs{\det(DF)}$.
\item The metric on $\Sigma$ is the pull-back of the Euclidean metric on $\R^d$ by $F$, and  $m =  \mt$. Being the pull-back of a Euclidean metric, the curvature tensor of $g$ vanishes and $R = 0$.
\item $
      \Gammat_{ij}^k = \Gammat_{\bari \jb}^{\bark} = 0$ and it follows using Formulas \eqref{EqFormulaForh} that $h_{ij}^k = -\Gamma_{ij}^k = -\frac12 \frac{\partial [F^{-1}]^k}{\partial y^\kb}\frac{\partial^2 F^\kb}{\partial x^i\partial x^j}$.
As a consequence, the curvature tensor $\Rt _{i\jbar \bark \ell}$ vanishes and $\Rt  = 0$.
\item $ \Deltat f = 4 \operatorname{trace}([DF]^{-1} D^2_{xy}f)$.
\end{enumerate}
In this case, the geometric Laplace formula reads
\begin{multline*}
      \iint_{\Rd\times \Rd}\frac{e^{-u(x,y)/\eps}}{(2\pi\eps)^{d/2}}\,dr(x,y) = \int_{\Sigma} fdm \,+ \\ \eps\int_\Sigma\Big[-\frac 18\Deltat f+ \frac 14 \nablat_{\!H} f + \left( -\frac{1}{8}\bracket{H,H} + \frac{1}{24}\bracket{h,h}\right) f\Big] \,dm  + O(\eps^2)\,,
    \end{multline*}
    with $f=dr/d\mt$, and gives the joint law of the random variable $(x,y)$.
Other noise models such as multiplicative noise could be treated in a similar way, leading to different geometries.

\subsection{Fenchel--Young duality gap} \label{sec:fenchel-young-gap}
An example for which the formula can be made remarkably simple is the Fenchel--Young gap on Euclidean space. 
The corresponding ambient metric is flat and the metric on $\Sigma$ is a Hessian metric.
Indeed, consider a convex function $F\colon \Rd \to \R$ and $F^*\colon \Rd \to \R$ its Legendre--Fenchel transform defined by $F^*(y) = \sup_x \langle y , x \rangle - F(x)$. Thus the Fenchel--Young duality gap defined by  
$0\leq u(x,y) \coloneqq F(x) + F^*(y) - \langle x, y\rangle$ satisfies our assumptions under smoothness hypothesis on $F$. The set $\Sigma$ is defined by the graph of $D F$, the derivative of $F$. In such a case, the geometric quantities are
\begin{enumerate}[(a)]
\item The metric on the product space is $\gt = \frac 12 \begin{pmatrix} 0 & \operatorname{Id} \\ \operatorname{Id}& 0\end{pmatrix}$.
It is a flat metric and its Christoffel symbols and curvature tensor vanish.
\item The metric on $\Sigma$ is the Hessian metric of $F$, $g(x)(v,v) = \langle v,D^2 F(x) v \rangle$. 
\item For such a Hessian metric on the Euclidean space, one has
$
\Gamma^k_{ij} = F^{pk} F_{ijp}
$, where $F_{ijp}=\partial_{ijp}F$ and $F^{pk}$ denotes the inverse matrix of $F_{kp}$. Since  $\hGamma_{ij}^k = \Gamma_{ij}^k + h_{ij}^k = 0$, one has $h_{ij}^k = - \Gamma_{ij}^k = - F^{pk} F_{ijp}$. The curvature tensor reads
$R_{ijk\ell} = \frac 14 F^{mn}(F_{jkm}F_{i\ell n} - F_{j\ell m}F_{ikn})$.
\item The Laplacian on $X \times Y$ is $ \Deltat f = 4\operatorname{trace}(D^2_{xy}f) $.
\end{enumerate}
The Laplace formula reads
 \begin{multline*}
      \iint_{X\times Y}\frac{e^{-u(x,y)/\eps}}{(2\pi\eps)^{d/2}}f(x,y)\,d\mt (x,y) = \int_{\Sigma} fdm \,+ \\ \eps\int_\Sigma\Big[-\frac 18\Deltat f+ \frac 14 \nablat_{\!H} f+ \left( - \frac{1}{8}R  -\frac{1}{8}\bracket{H,H} + \frac{1}{24}\bracket{h,h}\right) f\Big] \,dm  + O(\eps^2)\,.
    \end{multline*}
\par{\textbf{Change of variable. }}
Now, we underline that, in practical cases, one is given a function $u$ on a manifold $X \times Y$. However, the Kim--McCann metric is defined via $u$ therefore it is not invariant to reparametrization.
As explained in the introduction, changes of variables can be used in order to simplify the first order formula, the simplest formulation being obtained via a change of variable that transforms $u$ to a gaussian in the standard case of Section \ref{sec:translation-invariant-cost}. However, since the vanishing set of $u$, $\Sigma$, is a $d$-dimensional manifold such a change of variable cannot be performed. 
Yet, for the Fenchel--Young gap, a natural change of variables is given by the map $DF\colon X \to Y$ which will be assumed a smooth diffeomorphism.
One can now consider the function $\tilde u\colon X \times X \to \R$ defined by 
\[
    \tilde u(x,x') = u(x,D F(x'))= F(x) + F^*(D F(x')) - \langle D F(x'),x\rangle.
\]
It is known as the Bregman divergence of $F$, see Examples~\ref{ex:bregmandiv} and~\ref{ex:fenchelyounggap}. The Kim--McCann metric is $$\gt(x,x') = \frac 12 \begin{pmatrix} 0 & D^2 F(x') \\ D^2 F(x')& 0\end{pmatrix}\,,$$ $\Sigma$ is the diagonal  $\{(x,x) \in X\times X\}$ and the metric on $\Sigma$ is again the Hessian metric of $F$. Note that the Kim--McCann metric does not depend on $x$ and thus its Christoffel symbols vanish. The interest of this change of variable is to set the second fundamental form to $0$. 
Therefore, in these coordinates, the Laplace formula is particularly simple
\begin{equation*}
      \iint_{X\times X}\frac{e^{-\tilde u(x,x')/\eps}}{(2\pi\eps)^{d/2}}f(x,x')\,d\mt (x,x') =  \int_{\Sigma} fdm - \frac\eps 8 \int_\Sigma \big(\Deltat f  + R  f\big) \,dm  + O(\eps^2)\,.
\end{equation*}
Note that for practical applications, the previous formula necessitates the knowledge of $D F$, implicitly given by the Fenchel--Young gap.

\section*{Acknowledgement}

F.L. was supported by ERC grant
NORIA and by the French government under management of Agence Nationale de la Recherche,
as part of the ``Investissements d’avenir'' program, reference ANR19-P3IA-0001 (PRAIRIE 3IA
Institute). F.L. also received funding from the European Research Council under the European Union’s Horizon 2020 research and innovation programme (Grant Agreement no. 866274).
\\
The work of F.-X. Vialard was partly supported by the Bézout Labex (New Monge Problems), funded by ANR, reference ANR-10-LABX-58.

\bibliographystyle{apalike}
\bibliography{articles.bib}

\end{document}